\documentclass{article}
\usepackage{graphicx} 
\usepackage{amsmath,amsthm,amssymb,mathtools}
\usepackage{tikz}
\usepackage{hyperref}
\usepackage[all]{xy}
\usepackage{subcaption} 
\usepackage{tabularx,array}
\usepackage{cancel}
\usepackage{authblk}
\newtheorem{theorem}{Theorem}[section]
\newtheorem{remark}[theorem]{Remark}
\newtheorem{proposition}[theorem]{Proposition}
\newtheorem{corollary}[theorem]{Corollary}

\newtheorem{definition}[theorem]{Definition}
\newtheorem{lemma}[theorem]{Lemma}

\hypersetup{hypertexnames=false}

\DeclareMathOperator{\con}{\mathsf{C}}
\DeclareMathOperator{\notcon}{\cancel{ \con}}
\DeclareMathOperator{\Overl}{\mathsf{O}}
\DeclareMathOperator{\Q}{\mathsf{Q}}
\DeclareMathOperator{\G}{\mathsf{G}}
\DeclareMathOperator{\notG}{\cancel{ \G}}
\DeclareMathOperator{\R}{\mathsf{R}}
\DeclareMathOperator{\notR}{\cancel{ \R}}

\newcommand{\KO}[1]{\mathrm{K}_{\Overl}(#1)}
\newcommand{\KC}[1]{\mathrm{K}_\T(#1)}

\newcommand{\cn}{\mathrm{c}}
\newcommand{\cbf}{\mathbf{c}}
\newcommand{\one}{\mathbf{1}}
\newcommand{\zero}{\mathbf{0}}
\newcommand{\A}{\mathbf{A}}

\newcommand{\D}{\mathbf{D}}
\newcommand{\X}{\mathbf{X}}
\newcommand{\T}{\mathcal{T}}
\newcommand{\U}{\mathcal{U}}

\newcommand{\boxn}{\blacksquare}
\newcommand{\diamon}{\blacklozenge}
\title{A Modal Expansion of Kleene Algebras via Twist Structures}
\author[1,2]{Sergio Celani\thanks{\texttt{scelani@exa.unicen.edu.ar}}}
\author[1]{Paula Mench\'on\thanks{\texttt{mpmenchon@nucompa.exa.unicen.edu.ar}}}
\author[1,2]{Valeria Miguelez\thanks{\texttt{mmiguelez@alumnos.exa.unicen.edu.ar}}}

\affil[1]{NUCOMPA, Departamento de Matem\'atica, Facultad de Ciencias Exactas, Universidad Nacional del Centro de la Provincia de Buenos Aires, 7000 Tandil, Argentina}
\affil[2]{CONICET}

\date{}

\begin{document}

\maketitle

\begin{abstract}
Kleene triples, introduced by Jalali, provide a representation of arbitrary Kleene algebras in terms of twist-products. We introduce modal Kleene algebras and modal Kleene triples, and establish a categorical equivalence between the corresponding categories, extending Jalali's duality to the modal setting. We study the centered case and show that Kleene algebras with implication can be naturally described within this framework by viewing implication as a family of modal operators. Finally, we develop a topological duality for modal Kleene triples.
\end{abstract}

\bigskip
\noindent\textbf{Keywords:}
Kleene algebras, modal operators, twist-product, Kleene triples.



\section{Introduction}

Kleene algebras arise from the normal distributive $i$-lattices introduced by J. A. Kalman \cite{kalman1958lattices}. They are distributive lattices equipped with an involution satisfying the normality condition
\(x \wedge \sim x \leq y \vee \sim y\).
A fundamental construction due to Kalman associates with every bounded distributive lattice \(L\) a centered Kleene algebra \(K(L)\), built on pairs \((a,b)\in L\times L\) satisfying \(a\wedge b=0\), with involution \(\sim(a,b)=(b,a)\). Cignoli \cite{cignoli1986class} showed that this construction defines a functor from bounded distributive lattices to centered Kleene algebras, and established a categorical equivalence when the Kleene side is restricted to algebras satisfying the interpolation property, later shown to be equivalent to condition~(CK) of \cite{2017kleene}. In particular, this yields an equivalence between Heyting algebras and centered Nelson algebras \cite{cignoli1986class,sendlewski1990nelson}, making it possible to transfer results from the theory of Heyting algebras to Nelson algebras, the algebraic semantics of Nelson's constructive logic with strong negation \cite{nelson1949constructible,Rasiowa58}. Since Kalman's construction always produces centered algebras, the representation of arbitrary Kleene algebras requires a different approach. This was provided by Jalali \cite{jalali, viglizzo99algebras} via the notion
of \emph{Kleene triples} $\langle L, R, F \rangle$, where $L$ is a bounded
distributive lattice, $R$ is an independence relation on $L$, and $F$ is a
Boolean filter. Independence relations are dual to the contact relations
studied in \cite{duntsch2006topological, duntsch2008distributive}. Jalali
proved that the category of Kleene algebras is isomorphic to the category of
Kleene triples, with Kalman's construction corresponding to the special case
where $R$ is the complement of the contact relation \textit{Overlap}.

In this paper we extend the Kleene triple framework by incorporating modal
operators. We introduce modal Kleene algebras and modal Kleene triples and
establish a categorical equivalence between them, generalizing Jalali's
theorem. One of the main features of our approach is that, on the lattice side, we require only a few conditions for the modal operators. Nevertheless, the corresponding class of modal Kleene algebras forms a variety. We study two particular cases of this equivalence. The first
concerns centered modal Kleene algebras; the centered case has played a
central role throughout the history of the subject, as it was the first
setting in which equivalences of this kind were established
\cite{cignoli1986class, jalali}. The second concerns
Kleene algebras with implication. Motivated by the observation that
for each $a \in L$ the map $a \to (-)$ can be regarded as a modal operator
on $L$, so that an implication on $L$ amounts to a parametric family of
modal operators, we establish a categorical
equivalence between lattices with implication and Kleene algebras with
implication via the modal Kleene triple framework. As an application, we
show that subresiduated Nelson algebras \cite{noemi2025}, which extend
Nelson algebras by replacing the Heyting implication with a subresiduated
one, can be represented in this setting. In \cite{noemi2025} an equivalence
is established between subresiduated lattices and centered subresiduated
Nelson algebras satisfying~(CK); here we obtain a representation for the
non-centered case.

The paper is organized as follows. In Section~\ref{preliminaries} we recall the relevant
background on Jalali's representation and the centered case. Section~\ref{sec:modal}
introduces modal Kleene algebras and modal Kleene triples and establishes
the main categorical equivalence. Subsection~\ref{subsec:centered} analyzes the centered case
in detail. Section~\ref{sec:implicative} develops the representation for Kleene algebras
with implication. Finally, in Section~\ref{sec:topological-duality} we develop the topological duality for
the algebraic structures introduced in Section~\ref{sec:modal}, providing a complete dual
characterization of modal Kleene triples.

\section{Preliminaries}\label{preliminaries}

A \emph{De Morgan algebra} is an algebra $\A=\langle A,\wedge,\vee,\sim,0,1\rangle$ of type $(2,2,1,0,0)$ such that $\langle A,\wedge,\vee,0,1\rangle$ is a bounded distributive lattice and the following equations hold for all $x,y\in A$:
\begin{align}
\quad x &= \sim \sim x, \tag{Inv}\label{Inv}\\
\quad \sim (x \vee y) &= \sim x \wedge \sim y. \tag{DM}\label{DM}
\end{align}

A \emph{Kleene algebra} is a De Morgan algebra in which the following inequality holds for all $x,y \in A$:
\[
x \wedge \sim x \leq y \vee \sim y.
\]
An element $z\in A$ is called a \emph{center} of a Kleene algebra $\A$ if $z=\sim z$. It is well known that such an element, whenever it exists, is unique.

A \emph{centered Kleene algebra} $\A=\langle A,\wedge,\vee,\sim,\cn,0,1\rangle$ is an expansion of a Kleene algebra by a constant symbol $\cn$, interpreted as its center.

Let $\D=\langle D,\wedge,\vee,0,1\rangle$ be a bounded distributive lattice. Then, as shown in \cite{kalman1958lattices}, the following set
\[
\KO{D} := \{(a,b) \in D \times D : a \wedge b = 0\}
\]
equipped with the operations defined by
\begin{equation}
\label{eq:twist-operations}
\begin{array}{ll}
(1) \ (a,b) \cap (e,d) := (a \wedge e, b \vee d); & (2) \ (a,b) \cup (e,d) := (a \vee e, b \wedge d); \\
(3) \ \sim(a,b) := (b,a); & (4) \ \zero := (0,1); \\
(5) \ \one := (1,0); & (6) \ \cbf := (0,0),
\end{array}
\end{equation}
forms a centered Kleene algebra $\mathbf{K}_{\Overl}(\D)=\langle \KO{D},\cap,\cup,\sim,\cbf,\zero,\one\rangle$. 

A \emph{contact algebra} is an expansion of a Boolean algebra equipped with a binary relation $\con$, known as the \emph{contact relation}, which satisfies a set of axioms. This algebraic structure serves as one of the simplest formal counterparts of \emph{mereotopology}, a field concerned with the study of spatial relationships and regions.
A well-studied generalization of contact algebras (see \cite{duntsch2006topological,duntsch2008distributive}) drops the requirement of complementation, thereby weakening the underlying structure from a Boolean algebra to a distributive lattice. This generalization preserves the essential features of the original theory while extending its applicability to a broader class of structures.

Let $\D$ be a bounded distributive lattice. A binary relation $\con$ on $D$ is called a \emph{contact relation} \cite{duntsch2006topological,duntsch2008distributive} if it satisfies the following conditions:
\begin{align}
 &(\forall a)\; 0\notcon a; \label{C0}\tag{C0} \\
 &(\forall a)\; [a \neq 0 \Rightarrow a\con a]; \label{C1}\tag{C1} \\
 &(\forall a)(\forall b)\; [a\con b \Rightarrow b\con a]; \label{C2}\tag{C2} \\
 &(\forall a)(\forall b)(\forall c)\; [a\con b \text{ and } b \leq c \Rightarrow a\con c]; \label{C3}\tag{C3} \\
 &(\forall a)(\forall b)(\forall c)\; [a\con (b \vee c) \Rightarrow a\con b \text{ or } a\con c]. \label{C4}\tag{C4}
\end{align}

A pair $\langle \D, \con \rangle$ is called a \emph{distributive contact lattice} (DCL) if $\D$ is a bounded distributive lattice and $\con$ is a contact relation on $D$.

\begin{remark}\label{rem:overlap}
    Let $\D$ be a bounded distributive lattice. The binary relation $\Overl$ defined by
    \[
    a \Overl b \quad \text{iff} \quad a \wedge b \neq 0
    \]
    is a contact relation known as the \emph{overlap relation} \cite{duntsch2008distributive}. It is the smallest contact relation on $D$, i.e., for any contact relation $\con$ on $D$, if $a \Overl b$ then $a \con b$. In particular, it follows that for any contact relation, if $a \notcon b$, then $a \wedge b = 0$. 
\end{remark}

Kalman's construction can be reformulated in terms of contact as follows:
\[
\KO{D} = \{(a, b) \in D \times D : a \cancel{\Overl} b \},
\]
where the contact relation is the overlap. This naturally leads to the question of how the construction behaves when an arbitrary contact relation is considered in place of the overlap relation. This issue was addressed by Jalali in \cite{jalali}, who established an equivalence between contact lattices and Kleene algebras via twist structures. In particular, from the definition provided by Jalali, it can be shown that independence relations are the complements of the contact relations defined above. The results we present below follow this approach.

A filter $F$ of a distributive lattice $\D$ is said to be \emph{Boolean} if the quotient $D/F$ is a Boolean algebra or if $F=D$.

\begin{definition}
    Let $ \D$ be a distributive lattice, $ \con$ be a contact relation on $D$, and $ F$ be a Boolean filter of $ \D$. The triple $ \T=\langle \D, \con, F \rangle$ is called a \emph{Kleene triple} if for every $ a \in D$ there exists $ b \in D$ such that $ a\notcon b$ and $ a \vee b \in F$.
\end{definition}

The category of Kleene triples, denoted by $ \mathbf{Kt}$, is the category whose objects are Kleene triples and whose morphisms from
$\T_1=\langle\D_1,\con_1,F_1\rangle$ to $\T_2=\langle\D_2,\con_2,F_2\rangle$ are bounded lattice homomorphisms $f\colon \D_1\to \D_2$ such that $ f[F_1] \subseteq F_2$ and, $f(a)\con_2 f(b)$ implies $a\con_1 b$.

We denote by $\mathbf{Kl}$ the category of Kleene algebras and their homomorphisms.

The following proposition shows that starting from a Kleene triple $\T = \langle \D, \con, F \rangle$, one can construct a Kleene algebra $\mathbf{K}(\T)$.

\begin{proposition}
\label{prop:twist-kleene}
Let $\T = \langle \D, \con, F \rangle$ be a Kleene triple. Then the structure
\[
\mathbf{K}(\T) = \langle \KC{D}, \cap, \cup, \sim, \zero, \one \rangle
\]
is a Kleene algebra, where the universe is
\[
\KC{D} := \{ (a, b) \in D \times D : a\notcon b \text{ and } a \vee b \in F \},
\]
and the operations are as defined in \ref{eq:twist-operations}.
\end{proposition}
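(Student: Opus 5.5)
The proof checks closure of $\KC{D}$ under the operations and then inherits the Kleene identities from $D\times D^{\partial}$. We view $\KC{D}$ as a subset of $D\times D^{\partial}$, where $D^{\partial}$ is the order dual of $\D$. In $D\times D^{\partial}$, $\cap$ and $\cup$ are the lattice meet and join, $\zero=(0,1)$ is the bottom and $\one=(1,0)$ is the top, so this is a bounded distributive lattice. The map $\sim(a,b)=(b,a)$ is an involutive anti-automorphism of $D\times D^{\partial}$, so \eqref{Inv} and \eqref{DM} hold there. Hence it suffices to show two things: that $\KC{D}$ is closed under $\cap,\cup,\sim,\zero,\one$, and that the normality inequality holds on $\KC{D}$.

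\textbf{Closure.} For the constants, $0\notcon 1$ and $1\notcon 0$ hold by \eqref{C0} and \eqref{C2}, and $0\vee 1=1\in F$. For $\sim$, symmetry \eqref{C2} of $\con$ preserves $a\notcon b$, and $a\vee b=b\vee a$. Now take $(a,b),(e,d)\in\KC{D}$.
\begin{itemize}
\item For $\cap$ we must show $(a\wedge e)\notcon(b\vee d)$. Suppose the contrary. By \eqref{C4}, $(a\wedge e)\con b$ or $(a\wedge e)\con d$. Monotonicity \eqref{C3}, used together with \eqref{C2} to act in the first argument, then gives $a\con b$ or $e\con d$, a contradiction.
\item Also for $\cap$, we need $(a\wedge e)\vee b\vee d\in F$. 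Expanding by distributivity gives $(a\vee b\vee d)\wedge(e\vee b\vee d)$. Both factors lie in $F$ because $F$ is upward closed, so the meet lies in $F$.
\item The case of $\cup$ follows by applying $\sim$ and De Morgan, or by the symmetric argument.
\end{itemize}
Only the filter property of $F$ and the contact axioms are used here. Neither the Boolean property of $F$ nor the Kleene-triple existence condition is needed for this proposition; they matter later, for the converse representation.

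\textbf{Normality.} This is the only step with real content. Let $x=(a,b)$ and $y=(e,d)$. Then
\[
x\cap\sim x=(a\wedge b,\,a\vee b),\qquad y\cup\sim y=(e\vee d,\,e\wedge d).
\]
So we need $a\wedge b\le e\vee d$ and $e\wedge d\le a\vee b$. By Remark~\ref{rem:overlap}, $a\notcon b$ gives $a\wedge b=0$, and $e\notcon d$ gives $e\wedge d=0$. Both inequalities are therefore trivial.

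I expect no genuine obstacle. The only care needed is in the closure of $\cap$, where the contact axioms \eqref{C2}–\eqref{C4} must be combined correctly to move the contact down to $a\con b$ or $e\con d$.
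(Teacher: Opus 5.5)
Your proof is correct and complete. The paper gives no proof of this proposition (it is recalled from Jalali, with details deferred to Viglizzo), so there is nothing to compare against. Your argument is the standard verification: $\KC{D}$ is closed under the operations inside $D\times D^{\partial}$, and normality reduces to $a\wedge b=0$ via Remark~\ref{rem:overlap}. Your observation that only the filter property of $F$ is used, and neither its Booleanness nor the existence condition of a Kleene triple, is also accurate.
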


We also refer to $\mathbf{K}(\T)$ as the \emph{twist structure} associated with $\T$.

\begin{proposition}
\label{prop:Kfunctor-mor}
Let $\T_1=\langle \D_1,\con_1,F_1\rangle$ and $\T_2=\langle \D_2,\con_2,F_2\rangle$ be in $ \mathbf{Kt}$ and let $f \colon \T_1 \to  \T_2$ be a morphism. Then the map $\mathrm{K}(f) \colon \mathbf{K}(\T_1) \to \mathbf{K}(\T_2)$ defined by
\[
    \mathrm{K}(f)((a, b)) := (f(a), f(b))
\]
is a morphism of Kleene algebras. Consequently, the assignments
\[
\T\mapsto\mathbf{K}(\T),
\qquad
f\mapsto\mathrm{K}(f),
\]
define a functor $\mathrm{K}:\mathbf{Kt}\to\mathbf{Kl}$. 
\end{proposition}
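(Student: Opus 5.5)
We must check three things: $\mathrm{K}(f)$ is well defined, i.e. it maps $\KC{D_1}$ into $\KC{D_2}$; it preserves the Kleene operations; and the assignment respects identities and composition.

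For well-definedness, take $(a,b)\in\KC{D_1}$, so $a\notcon_1 b$ and $a\vee b\in F_1$. Since $f$ is a lattice homomorphism, $f(a)\vee f(b)=f(a\vee b)\in f[F_1]\subseteq F_2$. For the contact condition we argue by contraposition with the reflecting property of morphisms. If $f(a)\con_2 f(b)$, then $a\con_1 b$, contradicting $a\notcon_1 b$. Hence $f(a)\notcon_2 f(b)$, and $(f(a),f(b))\in\KC{D_2}$. This is the only step that actually uses the two defining conditions of morphisms in $\mathbf{Kt}$. It is the one place to be careful: the contact condition must be \emph{reflected} by $f$, not preserved, which is exactly the direction required here.

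Preservation of operations is a componentwise computation using that $f$ is a bounded lattice homomorphism:
\[
\mathrm{K}(f)((a,b)\cap(e,d))=(f(a\wedge e),f(b\vee d))=(f(a)\wedge f(e),f(b)\vee f(d))=\mathrm{K}(f)(a,b)\cap\mathrm{K}(f)(e,d).
\]
The case of $\cup$ is symmetric. For the involution, $\mathrm{K}(f)(\sim(a,b))=(f(b),f(a))=\sim\mathrm{K}(f)(a,b)$. For the constants, $\mathrm{K}(f)(0,1)=(f(0),f(1))=(0,1)$ and likewise $\mathrm{K}(f)(1,0)=(1,0)$. Since $\mathbf{K}(\T_1)$ and $\mathbf{K}(\T_2)$ are Kleene algebras by Proposition~\ref{prop:twist-kleene}, $\mathrm{K}(f)$ is therefore a Kleene algebra homomorphism.

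Functoriality is immediate. We have $\mathrm{K}(\mathrm{id}_\D)(a,b)=(a,b)$, and for $g\colon\T_2\to\T_3$,
\[
\mathrm{K}(g\circ f)(a,b)=(g(f(a)),g(f(b)))=\mathrm{K}(g)(\mathrm{K}(f)(a,b)).
\]
It also needs noting that identities and composites of $\mathbf{Kt}$-morphisms are again $\mathbf{Kt}$-morphisms, so that $\mathbf{Kt}$ is a category. This holds because filter inclusion and contact reflection both compose. Hence $\mathrm{K}\colon\mathbf{Kt}\to\mathbf{Kl}$ is a functor.
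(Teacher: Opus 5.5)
Your proof is correct. Well-definedness uses exactly the two morphism conditions, $f[F_1]\subseteq F_2$ and reflection of contact in the right direction, and the rest is the routine componentwise check. The paper gives no proof of this proposition (it is recalled from Jalali's work), but your argument is the standard one and matches how the paper handles the modal analogue in Proposition~\ref{morfismo mkt}.
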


Let $\A$ be a Kleene algebra. The set of \emph{negative elements} of $A$ is defined as:
\[
A^{-}:= \{ x \in A : x \leq \sim x \}.
\]
Dually, we define the set of \emph{positive elements} as:
\[
A^{+}  := \{ x \in A : \sim x \leq x \}.
\]

It is easy to see that $A^{-}= \{ x \wedge \sim x : x \in A \}$ and $A^{+}= \{ x \vee \sim x : x \in A \}$. It is also easy to prove that $A^{-}$ is an ideal and dually $A^{+}$ is a filter of $\A$.

Given $x \in A$, we denote by $|x|$ the equivalence class of $x$ in the quotient $A / A^{-}$, that is:
\begin{equation}
\label{eq:nega-quotient}
    |x| = |y| \iff \exists n \in A^{-}\text{ such that } x \vee n = y \vee n.
\end{equation}

This relation is not a congruence on Kleene algebras, but it is a congruence on distributive lattices. Therefore, $\mathbf{A / A^{-}}$  is a distributive lattice, on which we can define the following binary relation:
\begin{equation}\label{def:contact-quotient}
|x|\ \con_\A\ |y| \;\Longleftrightarrow\;
\forall x' \in |x|,\ \forall y' \in |y|,\ x' \nleq \sim y'.
\end{equation}

In \cite{jalali}, it is shown that the complement of the relation defined in \eqref{def:contact-quotient}, namely:
\begin{equation}
    \label{def:independence}
    |x|\ \notcon_\A\ |y| \;\Longleftrightarrow\;
    \exists x' \in |x|,\ \exists y' \in |y|,\ x' \leq \sim y',
\end{equation}
is an \emph{independence relation}. Consequently, the relation $\con_\A$ is a \emph{contact relation}.

The following result establishes a canonical way to associate a Kleene triple to any Kleene algebra.

\begin{proposition}[\cite{jalali}]
\label{prop:Tfunctor}
Let $\A$ be a Kleene algebra. Then the triple \[
\mathrm{T}(\A)= \langle \mathbf{A / A^{-}} , \con_\A, A^{+}  / A^{-} \rangle\]
is a Kleene triple, where the Boolean filter is given by
\[
A^{+}  / A^{-} = \{\, |x| \in A / A^{-} : x \in A^{+}  \,\}.
\]
Moreover, for every $|x| \in A / A^{-}$, we have
\[
|x|\, \notcon_\A\, |\sim x| \quad \text{and} \quad |x| \vee |\sim x| \in A^{+}  / A^{-}.
\]
\end{proposition}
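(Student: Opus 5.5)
We need to show three things: $A/A^{-}$ is a bounded distributive lattice, $\con_\A$ is a contact relation, and $A^{+}/A^{-}$ is a Boolean filter. After that, the Kleene-triple condition follows from the displayed ``moreover'' clause by taking $b=|\sim x|$.

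The first point is already recorded in the text: the relation \eqref{eq:nega-quotient} is the lattice congruence induced by the ideal $A^{-}$. Hence $|0|=A^{-}$ is the bottom and $|1|$ is the top.

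\textbf{Contact axioms.} I would work with the complement \eqref{def:independence}. The key reformulation is
\[
|x|\notcon_\A|y| \iff \exists n\in A^{-}\ \text{with}\ x\leq \sim y\vee n .
\]
For the forward direction, suppose $x'\le\sim y'$ with $x\vee n_1=x'\vee n_1$ and $y\vee n_2=y'\vee n_2$. Then
\[
x\le x'\vee n_1\le \sim y'\vee n_1 .
\]
Since $\sim y'\le \sim y\vee\sim n_2$ and $y\wedge\sim y\le \sim n_2$ fails in general, I would instead replace $y$ by $y\vee n_2$. This changes $\sim y$ only by meeting with $\sim n_2\in A^{+}$, and the Kleene inequality $n\le \sim m$ for $n,m\in A^{-}$ handles the correction terms. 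This gives a criterion invariant under the class representatives, and this bookkeeping is the main obstacle.

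With the criterion in hand, the axioms follow:
\begin{itemize}
\item (C0): $0\le\sim y$ for every $y$.
\item (C2): $x\le\sim y\vee n$ implies $y\le \sim x\vee \sim\!\sim n$ after adjusting. More simply, use the symmetric form: $x'\le\sim y'$ iff $y'\le\sim x'$.
\item (C3): if $|y|\le|z|$, choose representatives with $y'\le z'$, using $y\wedge z$. Then $x'\le\sim z'\le\sim y'$.
\item (C4): if $x'\le\sim b'$ and $x''\le\sim c'$, then $x'\wedge x''\le\sim(b'\vee c')$, and $x'\wedge x''$ represents $|x|$.
\item (C1): if $x'\le\sim x'$, then $x'\in A^{-}$, so $|x|=|0|$.
\end{itemize}

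\textbf{Boolean filter.} $A^{+}$ is a filter, so its image is a filter of the quotient. To show $(A/A^{-})/(A^{+}/A^{-})$ is Boolean, I would show that $|\sim x|$ acts as a complement of $|x|$ modulo the filter:
\begin{itemize}
\item $x\vee\sim x\in A^{+}$.
\item $x\wedge\sim x\in A^{-}$, so $|x|\wedge|\sim x|=|0|$.
\end{itemize}
This gives the ``moreover'' clause as well, since $x\le\sim\sim x$ witnesses $|x|\notcon_\A|\sim x|$. If the filter is the whole lattice, the case is allowed by definition.
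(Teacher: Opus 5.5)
The paper does not prove this proposition; it quotes it from \cite{jalali}. Your argument therefore has to stand on its own.

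Its overall shape is right. You verify (C0)--(C4) directly from the existential description \eqref{def:independence}. You show that $|x|$ and $|\sim x|$ become complements modulo $A^{+}/A^{-}$, because $x\vee\sim x\in A^{+}$ and $x\wedge\sim x\in A^{-}$. You then read off the Kleene-triple condition from the ``moreover'' clause. Your treatments of (C0), (C2) (in its symmetric form), (C4), the Boolean filter and the final clause are correct as written.

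\textbf{The ``key reformulation'' is false.} The claim $|x|\notcon_\A|y|\iff\exists n\in A^{-}\,(x\le\sim y\vee n)$ fails already in the three-element Kleene chain $0<\cn<1$. There $A^{-}=\{0,\cn\}$ and $|\cn|=|0|$, so $|1|\notcon_\A|\cn|$, witnessed by the representatives $1$ and $0$ since $1\le\sim 0$. Yet $\sim\cn\vee n=\cn$ for every $n\in A^{-}$, so $1\not\le\sim\cn\vee n$.

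The auxiliary inequality $\sim y'\le\sim y\vee\sim n_2$ fails in the same algebra, with $y=\cn$, $y'=0$, $n_2=\cn$. Conversely, the inequality you say ``fails in general'', $y\wedge\sim y\le\sim n_2$, always holds by the Kleene inequality, since $n_2\vee\sim n_2=\sim n_2$.

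The fix is simply to delete that paragraph. $\con_\A$ is defined by quantifying over all representatives, so it is automatically a relation on classes and there is no invariance to establish. Your bullet-point verifications never use the reformulation; they work directly with representative pairs.

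\textbf{Two bullets need small repairs.}

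For (C1), the hypothesis $|x|\notcon_\A|x|$ gives $x',x''\in|x|$ with $x'\le\sim x''$, not a single $x'$ with $x'\le\sim x'$. Set $w=x'\wedge x''\in|x|$. Then $w\le x'\le\sim x''\le\sim w$, so $w\in A^{-}$ and $|x|=|w|=|0|$.

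For (C3), the new representative of $|y|$ must lie below the particular $z'$ appearing in the witness $x'\le\sim z'$. Take $y\wedge z'$, which lies in $|y|$ because $|y\wedge z'|=|y|\wedge|z|=|y|$. Then $x'\le\sim z'\le\sim(y\wedge z')$.

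With these changes the proof is complete.
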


\begin{proposition}
\label{prop:Tfunctor-mor}
Let $f \colon \A \to \A'$ be a morphism of Kleene algebras. Then the map $\mathrm{T}(f) \colon \mathrm{T}(\A) \to \mathrm{T}(\A')$ defined by
\[
\mathrm{T}(f)(|x|) := |f(x)|
\]
is a well-defined morphism of Kleene triples.
\end{proposition}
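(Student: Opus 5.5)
We need to show three things for $\mathrm{T}(f)(|x|) := |f(x)|$: that it is well defined, that it is a bounded lattice homomorphism $\mathbf{A/A^{-}}\to\mathbf{A'/A'^{-}}$, and that it satisfies the two morphism conditions of $\mathbf{Kt}$. The basic observation behind all of them is that $f$ preserves $\wedge$, $\vee$ and $\sim$, and therefore maps negative elements to negative elements and positive elements to positive elements. Indeed, if $n\le \sim n$ then $f(n)\le f(\sim n)=\sim f(n)$, and the case of $A^{+}$ is dual.

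\textbf{Well-definedness.} Suppose $|x|=|y|$. Then there is $n\in A^{-}$ with $x\vee n=y\vee n$. Applying $f$ gives $f(x)\vee f(n)=f(y)\vee f(n)$, and $f(n)\in A'^{-}$ by the observation above. By \eqref{eq:nega-quotient}, this means $|f(x)|=|f(y)|$.

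\textbf{Bounded lattice homomorphism.} The quotient operations are computed on representatives, so
\[
\mathrm{T}(f)(|x|\wedge|y|)=|f(x\wedge y)|=|f(x)|\wedge|f(y)|,
\]
and similarly for $\vee$. The bounds are $|0|$ and $|1|$, and $f$ preserves $0$ and $1$.

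\textbf{Preservation of the Boolean filter.} An element of $A^{+}/A^{-}$ has the form $|x|$ with $x\in A^{+}$. Then $f(x)\in A'^{+}$, so $|f(x)|\in A'^{+}/A'^{-}$.

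\textbf{Reflection of contact.} This is the only step that needs care, and we prove it in contrapositive form. Suppose $|x|\notcon_\A|y|$. By \eqref{def:independence}, there are $x'\in|x|$ and $y'\in|y|$ with $x'\le\sim y'$. Then
\[
f(x')\le f(\sim y')=\sim f(y').
\]
By well-definedness, $f(x')\in|f(x)|$ and $f(y')\in|f(y)|$. So \eqref{def:independence} holds in $\A'$, giving $|f(x)|\notcon_{\A'}|f(y)|$. Equivalently, $\mathrm{T}(f)(|x|)\con_{\A'}\mathrm{T}(f)(|y|)$ implies $|x|\con_\A|y|$.

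The potential obstacle is making sure that the quantifiers in \eqref{def:contact-quotient} and \eqref{def:independence} are handled in the right direction. Because we work with the independence relation, which is existential, the witnesses $f(x'),f(y')$ transfer directly and no further argument is needed.
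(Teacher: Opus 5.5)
Your proof is correct and complete. Since $f$ preserves $A^{-}$ and $A^{+}$, you get well-definedness and preservation of the filter, and pushing the existential witnesses $x'\le\sim y'$ through $f$ gives reflection of contact. The paper states this proposition without proof (deferring to Jalali and Viglizzo), but your witness-transfer argument is the same one it uses for the analogous step, the modal contact condition, in the proof of Theorem~\ref{thm:cociente-modal}.
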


Consequently, the assignments
\[
\A\mapsto\mathrm{T}(\A),
\qquad
f\mapsto\mathrm{T}(f),
\]
define a functor $\mathrm{T}:\mathbf{Kl}\to\mathbf{Kt}$. The main result stated by Jalali \cite{jalali} is that the functors $\mathrm{K}$ and $\mathrm{T}$ establish an equivalence of categories. A detailed proof can also be found in \cite{viglizzo99algebras}.

\section{Modal Kleene Algebras}
\label{sec:modal}

In this section, we extend the construction of Kleene algebras from contact lattices by incorporating modal operations. Starting from a modal contact lattice, we define a corresponding twist structure equipped with modal operators. This approach generalizes previous constructions and provides an algebraic framework suitable for the study of modal logics.

\begin{definition}
    A \emph{modal lattice} is an algebra $\D=\langle D, \vee, \wedge,\Box, \Diamond, 0,1\rangle$ such that the reduct $\langle D, \vee, \wedge, 0,1\rangle$ is a bounded distributive lattice, and $\Box, \Diamond,$ are unary operators on $D$, and for all $a, b\in D$, the following equations are satisﬁed:
    \begin{align}
        \label{ml1}
        &\Box(a \wedge b) = \Box a \wedge \Box b;  \tag{ML1}\\
        \label{ml2}
        &\Diamond(a \vee b) = \Diamond a \vee \Diamond b; \tag{ML2}\\
        \label{ml3}
        &\Diamond 0=0. \tag{ML3}
    \end{align}
\end{definition}

When a modal lattice is equipped with a contact relation, we impose an additional compatibility condition. This condition, proposed by Menchón and Rodriguez \cite{Menchon2025} for the overlap relation, is essential for defining modal operators on the twist structure.

\begin{definition}
A \emph{modal contact lattice} is a pair $\langle \D,\con\rangle$, where $\D$ is a modal lattice and $\con$ is a contact relation on $D$ satisfying the following compatibility condition, for all $a,b\in D$:
\begin{equation}
\label{ax:modal-contact}\tag{MCL}
\Box a \con \Diamond b \Longrightarrow a \con b.
\end{equation}
\end{definition}

A typical example of a modal contact lattice is provided by modal Boolean contact algebras, in which the modal operators are interdefinable. In this context, Condition~\eqref{ax:modal-contact} can be reformulated in terms of \emph{subordinations}, which are binary relations. In particular, the standard \emph{non-tangential part} relation is defined by
\[
a \ll b \;\Longleftrightarrow\; a \notcon -b,
\]
where $-b$ denotes the Boolean complement of $b$. In terms of this relation, Condition \eqref{ax:modal-contact} can be restated as
\[
a \ll b \ \Longrightarrow\ \Box a \ll \Box b.
\]
or equivalently,
\[
a \ll b \ \Longrightarrow\ \Diamond a \ll \Diamond b.
\]

This condition has been studied for proximities within the framework of modal De Vries algebras, where it is referred to as $\Diamond$-proximity preserving, a notion introduced in \cite{bezhanishvili2015modal}.

Taking into account the twist construction, we introduce the following definition.

\begin{definition}
\label{def:modal-triple}
A \emph{modal Kleene triple} is a structure $\T=\langle \D, \con, F\rangle$ where $\langle \D, \con \rangle$ is a modal contact lattice, $F$ is a Boolean filter of $\D$ and the following conditions are satisfied:
\begin{align}
    \label{cond:existence}
    &\forall a \in D,\ \exists b \in D \text{ such that } a\notcon b \text{ and } a \vee b \in F;\\
    \label{cond:filter}
    &\text{If } a\notcon b \ \text{ and } a \vee b \in F, \text{ then } \Box a \vee \Diamond b \in F.
\end{align}
\end{definition}

Condition \eqref{cond:existence} is precisely the defining property of a Kleene triple, while \eqref{cond:filter} is a new compatibility requirement that guarantees the modal operators are well-defined on the corresponding twist structure.

The \emph{category of modal Kleene triples}, denoted by $\mathbf{mKt}$, is the category whose objects are modal Kleene triples and whose morphisms $f:\T_1\to\T_2$, where $\T_1=\langle\D_1,\con_1,F_1\rangle$ and $\T_2=\langle\D_2,\con_2,F_2\rangle$, are morphisms of modal bounded distributive lattices that are also morphisms of Kleene triples; that is,
$f[F_1]\subseteq F_2$ and
$f(a) \con_2 f(b)$ implies $a\con_1 b$.

\begin{remark}\label{remark filter}
Let $\T=\langle \D,\con,F\rangle$ be a modal Kleene triple. Then $F$ is an open filter, that is, if $a\in F$, then $\Box a\in F$. Indeed, let $a\in F$. Since $a\notcon 0$, $\Box a=\Box a\vee\Diamond 0\in F$.

Moreover, Dunn's axiom
\[
\Box(a\vee b)\le \Box a\vee\Diamond b
\]
provides a sufficient condition for a Kleene triple to be modal. Indeed, if $\T=\langle \D,\con,F\rangle$ is a Kleene triple equipped with modals operators satisfying Dunn's axiom and $F$ is an open filter, then $\T$ is a modal Kleene triple.
\end{remark}

\begin{definition}
\label{def:modal-kleene}
A \emph{modal Kleene algebra} is an algebra $\A=\langle A,\wedge, \vee, \sim, \Box, 0,1\rangle$ such that its reduct $\langle A,\wedge, \vee, \sim, 0,1\rangle$ is a Kleene algebra and the following conditions hold:
    \begin{align}
        \label{mk}
        &\Box(x \wedge y) = \Box x \wedge \Box y;\tag{MK}\\
        \label{ax:Pos}
        &\sim\Box(z \vee \sim z)\leq \Box( z \vee \sim z);\tag{Pos}\\
        \label{ax:Con}
        &\Box(x \vee (z \wedge \sim z)) \leq \Box x\vee  \sim \Box(x \vee (z \wedge \sim z)).\tag{Con}
    \end{align}
\end{definition}

We denote by $\mathbf{mKl}$ the category whose objects are modal Kleene algebras and whose morphisms are Kleene morphisms that preserve the modal operator.

It is worth noting that the class of modal Kleene algebras forms a variety. The operator $\Box$ is a modal operator that preserves finite meets and, by condition \eqref{ax:Pos}, maps positive elements to positive elements. As usual, we define the dual operator $\Diamond$ by $\Diamond x := \sim \Box \sim x$. Then, the following conditions hold:
    \begin{align}
        \label{ax:MK'}
        &\Diamond(x \vee y) = \Diamond x \vee \Diamond y;\tag{MK'}\\
        \label{ax:Neg}
        &\Diamond(z \wedge \sim z)\leq \sim \Diamond(z \wedge \sim z).\tag{Neg}
    \end{align}

Condition \eqref{ax:Con}, which is equivalent to 
\begin{equation}
    \Box(x \vee (z \wedge \sim z))= \Box x\vee \big(\Box(x \vee (z \wedge \sim z))\wedge\sim \Box(x \vee (z \wedge \sim z))\big),\tag{Con'}\label{ax:Con'}
\end{equation} 
plays a crucial role in ensuring the well-definedness of the operator $\Box$ on the quotient algebra. More precisely, we have the following result.

\begin{theorem}
    Let $\A$ be a modal Kleene algebra. Then the quotient algebra $\mathbf{A/A^{-}}$, equipped with the modal operators given by
    \[
    \Box |x| := |\Box x|, \quad \text{and} \quad \Diamond |x| := |\Diamond x|,
    \]
    is a modal lattice.
\end{theorem}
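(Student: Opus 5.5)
The plan is first to show that $\Box$ and $\Diamond$ are well defined on $A/A^{-}$. Once that is done, axioms \eqref{ml1}--\eqref{ml3} transfer from $\A$ through the lattice congruence $x\mapsto |x|$. We already know that $\mathbf{A/A^{-}}$ is a bounded distributive lattice and that $|\cdot|$ is a lattice homomorphism. So the entire content of the theorem lies in compatibility: if $|x|=|y|$, then $|\Box x|=|\Box y|$ and $|\Diamond x|=|\Diamond y|$.

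For $\Box$ I argue as follows. Suppose $|x|=|y|$, so $x\vee n=y\vee n$ for some $n\in A^{-}$. Write $n=z\wedge\sim z$ for some $z$, using $A^{-}=\{z\wedge\sim z\}$. By \eqref{ax:Con'},
\[
\Box(x\vee n)=\Box x\vee m, \qquad m:=\Box(x\vee n)\wedge\sim\Box(x\vee n)\in A^{-},
\]
and likewise $\Box(y\vee n)=\Box y\vee m'$ with $m'\in A^{-}$. Since $x\vee n=y\vee n$, we get $\Box x\vee m=\Box y\vee m'$. Joining both sides with $m'$ and with $m$ gives $\Box x\vee(m\vee m')=\Box y\vee(m\vee m')$. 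Because $A^{-}$ is an ideal, $m\vee m'\in A^{-}$, and hence $|\Box x|=|\Box y|$.

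For $\Diamond$, the plan is to use the dual axioms. By the definition $\Diamond x=\sim\Box\sim x$, condition \eqref{ax:MK'} makes $\Diamond$ monotone and join-preserving. Given $x\vee n=y\vee n$ with $n\in A^{-}$, we obtain $\Diamond x\vee\Diamond n=\Diamond y\vee\Diamond n$. By \eqref{ax:Neg}, with $n=z\wedge\sim z$, we have $\Diamond n\le\sim\Diamond n$, so $\Diamond n\in A^{-}$, and therefore $|\Diamond x|=|\Diamond y|$. This step is easy.

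With well-definedness in hand, the axioms follow by computing with representatives:
\begin{align*}
\Box(|x|\wedge|y|) &= |\Box(x\wedge y)| = |\Box x\wedge\Box y| = \Box|x|\wedge\Box|y|,\\
\Diamond(|x|\vee|y|) &= |\Diamond(x\vee y)| = \Diamond|x|\vee\Diamond|y|.
\end{align*}
The first line uses \eqref{mk} and the second uses \eqref{ax:MK'}.

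The remaining axiom \eqref{ml3} requires $|\Diamond 0|=|0|$, that is, $\Diamond 0\in A^{-}$. This holds because $0=0\wedge\sim 0$, so \eqref{ax:Neg} gives $\Diamond 0\le\sim\Diamond 0$. Since the zero of $A/A^{-}$ is the class $A^{-}$, the axiom follows.

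The only real obstacle is the well-definedness of $\Box$: $\Box$ need not preserve joins, so one cannot simply push the negative element $n$ through it. Condition \eqref{ax:Con'} is exactly what absorbs this, and the argument above makes use of it together with the ideal property of $A^{-}$.
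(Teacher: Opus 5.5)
Your proof is correct and follows essentially the same route as the paper. Both use join-preservation plus \eqref{ax:Neg} for $\Diamond$, \eqref{ax:Con'} for $\Box$, and $\Diamond 0\in A^{-}$ for \eqref{ml3}; the only difference is that your $m$ and $m'$ are in fact the same element, since both come from $\Box(x\vee n)=\Box(y\vee n)$, so joining with $m\vee m'$ is harmless but unnecessary.
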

\begin{proof}
It remains to verify that the relation is compatible with the modal operators. 
Suppose $|x| = |y|$. Then there exists $n \in A^{-}$ such that $x \vee n = y \vee n$. 
Since $\Diamond$ preserves joins, we have $\Diamond x \vee \Diamond n = \Diamond y \vee \Diamond n$. Moreover, by \eqref{ax:Neg} we have $\Diamond n \in A^{-}$. Consequently, $|\Diamond x| = |\Diamond y|$.

To show compatibility for $\Box$, we use \eqref{ax:Con'} together with the fact that $n \wedge \sim n = n$:
    \begin{align*}
        \Box x \vee \bigl[\Box(x\vee n)\wedge \sim \Box(x\vee n)\bigr] &= \Box (x\vee n)\\
        &= \Box (y\vee n)\\
        &= \Box y \vee \bigl[\Box(y \vee n)\wedge \sim \Box(y\vee n)\bigr].
    \end{align*}
    It is straightforward that $\Box(x\vee n)\wedge \sim \Box(x\vee n)=\Box(y\vee n)\wedge \sim \Box(y\vee n)\in A^-$. Hence $|\Box x| = |\Box y|$, establishing compatibility.

It is immediate that axioms \eqref{ml1} and \eqref{ml2} are satisfied. For \eqref{ml3}, note that $\Diamond |0| = |\Diamond 0| = |0|$, since $\Diamond 0 \in A^{-}$.
\end{proof}

The previous result shows that the quotient of a modal Kleene algebra by its negative elements is a modal lattice. We next show that the induced contact relation and filter endow this quotient with the structure of a modal Kleene triple.

\begin{theorem}
\label{thm:cociente-modal}
    If $\A$ is a modal Kleene algebra, then 
    \[
        \mathrm{T}(\A)=\langle \A/\A^{-}, \con_{\A},A^{+}  / A^{-} \rangle
    \]
    is a modal Kleene triple. 
\end{theorem}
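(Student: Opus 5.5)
The plan is to reduce the statement to the three things not yet established. By Proposition~\ref{prop:Tfunctor}, $\mathrm{T}(\A)$ is already a Kleene triple, so $A^{+}/A^{-}$ is a Boolean filter, $\con_\A$ is a contact relation, and condition \eqref{cond:existence} holds. By the previous theorem, $\A/\A^{-}$ with $\Box|x|=|\Box x|$ and $\Diamond|x|=|\Diamond x|$ is a modal lattice. It remains to verify \eqref{ax:modal-contact} and \eqref{cond:filter}.

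\emph{Step 1: \eqref{ax:modal-contact}.} I will prove the contrapositive: $|x|\notcon_\A|y|$ implies $\Box|x|\notcon_\A\Diamond|y|$. By \eqref{def:independence} there are $x'\in|x|$ and $y'\in|y|$ with $x'\le\sim y'$. Monotonicity of $\Box$ (a consequence of \eqref{mk}) gives $\Box x'\le\Box\sim y'=\sim\Diamond y'$, using \eqref{Inv}. Since the quotient operators are well defined, $\Box x'\in|\Box x|$ and $\Diamond y'\in|\Diamond y|$. So this pair of representatives witnesses $|\Box x|\notcon_\A|\Diamond y|$.

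\emph{Step 2: \eqref{cond:filter}.} Assume $|x|\notcon_\A|y|$ and $|x|\vee|y|\in A^{+}/A^{-}$. Choose $x'\le\sim y'$ as above. Then $|x'\vee y'|=|p|$ for some $p\in A^{+}$, so there is $n\in A^{-}$ with $x'\vee y'\vee n=p\vee n$.

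\begin{itemize}
\item First I check that $s:=x'\vee y'\vee n$ is positive: $\sim s=\sim p\wedge\sim n\le p\le s$. Hence $s=s\vee\sim s$, and \eqref{ax:Pos} gives $\Box s\in A^{+}$.
\item Next set $t:=\sim y'\wedge s$. Distributing and using $x'\le\sim y'$ gives $t=x'\vee m$, where $m=(\sim y'\wedge y')\vee(\sim y'\wedge n)\in A^{-}$ because $A^{-}$ is an ideal. Write $m=z\wedge\sim z$, which is possible since $m\in A^{-}$.
\item By \eqref{ax:Con'}, $\Box t=\Box x'\vee(\Box t\wedge\sim\Box t)$, and the second joinand is negative. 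Therefore $|\Box x|=|\Box x'|=|\Box t|=|\Box\sim y'\wedge\Box s|$.
\item Finally, by distributivity in the lattice $\A/\A^{-}$,
\[
|\Box x|\vee|\Diamond y| = |(\Box\sim y'\vee\Diamond y')\wedge(\Box s\vee\Diamond y')|.
\]
Here $\Box\sim y'\vee\Diamond y'=w\vee\sim w$ with $w=\Box\sim y'$, so it is positive. Also $\Box s\vee\Diamond y'\ge\Box s$ is positive, since $A^{+}$ is an up-set. Since $A^{+}$ is a filter, their meet is positive, which gives \eqref{cond:filter}.
\end{itemize}

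The main obstacle is Step 2. Nothing like Dunn's axiom is available, so we cannot split $\Box$ of a join. The trick is to cut $s$ down by $\sim y'$ so that it differs from $x'$ only by a negative element. Then \eqref{ax:Con} identifies $|\Box x|$ with $|\Box\sim y'\wedge\Box s|$, and \eqref{ax:Pos} supplies the positivity of $\Box s$.
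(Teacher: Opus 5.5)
Your proof is correct. Step 1 is the paper's argument. Step 2 uses the same ingredients as the paper but arranges them differently: representatives with $x'\le\sim y'$, \eqref{ax:Pos} applied to the positive element $x'\vee y'\vee n$, and the identity $\sim y'\wedge(x'\vee y')=x'\vee(\sim y'\wedge y')$ feeding \eqref{ax:Con}.

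The paper applies $\Box$ first. Using \eqref{ax:Con'} on $\Box p=\Box(x'\vee y'\vee n)$, it obtains $\sim\Box p=\sim\Box(x'\vee y')\wedge\Box p$. This lets it factor $\Box x'\vee\Diamond y'\vee\sim\Box p$ as $u\wedge(\Box x'\vee\Diamond y'\vee\Box p)$. A second application of \eqref{ax:Con}, to $x'\vee(\sim y'\wedge y')$, is then needed to show that $u=\Box x'\vee\Diamond y'\vee\sim\Box(x'\vee y')$ is positive.

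You instead meet with $\sim y'$ before applying $\Box$. This absorbs both $n$ and $y'\wedge\sim y'$ into the single negative element $m$, so $|x|=|\sim y'|\wedge|s|$ already holds in the quotient. One use of \eqref{ax:Con'} then gives $\Box|x|=|\Box\sim y'|\wedge|\Box s|$; this is really just the well-definedness of $\Box$ on $\A/\A^{-}$ from the previous theorem. After distributing, the factor $\Box\sim y'\vee\Diamond y'=w\vee\sim w$ is visibly positive, with no further computation.

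Your route is shorter. It shows that \eqref{cond:filter} for $\mathrm{T}(\A)$ follows formally from the modal-lattice structure of the quotient together with \eqref{ax:Pos}. It also checks explicitly that $x'\vee y'\vee n=p\vee n$ is positive, a step the paper absorbs into its choice of $p$. The paper's version, in exchange, stays at the level of elements of $A$ and exhibits the witnesses $n_1=\sim\Box p$ and $p_1$ directly.
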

\begin{proof}
    By Proposition \ref{prop:Tfunctor}, it suffices to verify Condition~\eqref{cond:filter} and the Modal Contact Condition~\eqref{ax:modal-contact}.

    We first prove the Modal Contact Condition. Suppose that $|x| \notcon_\A |y|$. Then there exist $x' \in |x|$ and $y' \in |y|$ such that $x' \leq \sim y'$. Consequently, $\Box x' \leq \Box \sim y' = \sim \Diamond y'$, and hence $|\Box x| \notcon_\A |\Diamond y|$.

    It remains to check Condition~\eqref{cond:filter}. Assume $|x| \notcon_\A |y|$ and $|x\vee y| \in A^{+}/A^{-}$. Then there exist $x' \in |x|$, $y' \in |y|$, $n \in A^{-}$, and $p \in A^{+}$ such that $x' \leq \sim y'$ and $x' \vee y' \vee n = p$. Since $\Box$ preserves positives, we have $\Box p \in A^{+}$. Moreover, a direct application of \eqref{ax:Con'} yields:
    \[
        \Box p = \Box(x'\vee y'\vee n) = \Box(x'\vee y') \vee [\Box p \wedge \sim \Box p] = \Box(x'\vee y') \vee \sim \Box p,
    \]
    which implies $\sim \Box p = \sim \Box(x'\vee y') \wedge \Box p \in A^{-}$. Distributivity then yields:
    \begin{equation*}
        \Box x' \vee \Diamond y' \vee \sim \Box p = \bigl[\Box x' \vee \Diamond y' \vee \sim \Box(x'\vee y')\bigr] \wedge \bigl[\Box x' \vee \Diamond y' \vee \Box p\bigr].
    \end{equation*}

    Observe that $\Box x' \vee \Diamond y' \vee \Box p \in A^{+}$, since $\Box p \in A^{+}$. Set 
    \[
        u := \Box x' \vee \Diamond y' \vee \sim \Box(x'\vee y').
    \]
    We show that $u \in A^{+}$ by proving $\sim u \leq u$. Indeed, a straightforward computation using \eqref{ax:Con} and $x'\leq \sim y'$ shows that:
    \begin{align*}
        \sim u &\leq \sim \Diamond y' \wedge \Box(x'\vee y') \\
        &= \Box\bigl(\sim y' \wedge (x'\vee y')\bigr) \\
        &= \Box\bigl[x' \vee (\sim y' \wedge y')\bigr] \\
        &\leq \Box x' \vee \sim \Box\bigl[x' \vee (\sim y' \wedge y')\bigr] \\
        &= \Box x' \vee \Diamond y' \vee \sim \Box(x'\vee y') \\
        &= u.
    \end{align*}

    Finally, taking $n_1 := \sim \Box p \in A^{-}$ and $p_1 := u \wedge \bigl[\Box x' \vee \Diamond y' \vee \Box p\bigr] \in A^{+}$, we obtain $\Box x' \vee \Diamond y' \vee n_1 = p_1$. Thus $|\Box x \vee \Diamond y| \in A^{+}/A^{-}$, completing the proof.
\end{proof}

We now show that a homomorphism of modal Kleene algebras induces a morphism between the associated modal Kleene triples.

\begin{proposition}\label{moorfismos mkl}
    If $f : \A \rightarrow \A'$ is a homomorphism of modal Kleene algebras, then the map $\mathrm{T}(f) :\mathrm{T}(\A) \rightarrow \mathrm{T}(\A')$ defined by $\mathrm{T}(f)(|x|) := |f(x)|$ is a morphism in the category of modal Kleene triples.
\end{proposition}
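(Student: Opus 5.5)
By Proposition~\ref{prop:Tfunctor-mor}, $\mathrm{T}(f)$ is already a well-defined morphism of Kleene triples. This means it is a bounded lattice homomorphism from $\A/\A^{-}$ to $\A'/\A'^{-}$ that maps $A^{+}/A^{-}$ into $A'^{+}/A'^{-}$ and reflects the contact relations. By Theorem~\ref{thm:cociente-modal}, both $\mathrm{T}(\A)$ and $\mathrm{T}(\A')$ are modal Kleene triples. By the definition of the morphisms of $\mathbf{mKt}$, the only remaining task is to show that $\mathrm{T}(f)$ commutes with the modal operators $\Box$ and $\Diamond$ defined on the quotients by $\Box|x| = |\Box x|$ and $\Diamond|x| = |\Diamond x|$.

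The plan is a direct computation using that $f$ preserves $\Box$ and $\sim$. First, for $\Box$:
\[
\mathrm{T}(f)(\Box|x|) = \mathrm{T}(f)(|\Box x|) = |f(\Box x)| = |\Box f(x)| = \Box|f(x)| = \Box\,\mathrm{T}(f)(|x|).
\]
Second, for $\Diamond$, recall that $\Diamond x = \sim\Box\sim x$. Since $f$ preserves both $\sim$ and $\Box$, we get $f(\Diamond x) = \sim\Box\sim f(x) = \Diamond f(x)$. The same chain of equalities as above then gives $\mathrm{T}(f)(\Diamond|x|) = \Diamond\,\mathrm{T}(f)(|x|)$.

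Each of these computations uses only the definitions. Well-definedness of the quotient operators comes from the theorem preceding Theorem~\ref{thm:cociente-modal}, and well-definedness of $\mathrm{T}(f)$ on classes comes from Proposition~\ref{prop:Tfunctor-mor}. So I expect no real obstacle. The one point needing care is that $\Diamond$ on $\mathrm{T}(\A)$ is defined through the derived operator $\Diamond x=\sim\Box\sim x$ rather than as a primitive, and this is why preservation of $\sim$ by $f$ is needed as well as preservation of $\Box$.
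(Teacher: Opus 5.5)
Your proposal is correct and follows essentially the same route as the paper: it invokes Proposition~\ref{prop:Tfunctor-mor} for the Kleene-triple part and then checks preservation of $\Box$ and $\Diamond$ by direct computation, handling $\Diamond$ through $\Diamond x=\sim\Box\sim x$. Your explicit remark that $f$ must preserve $\sim$ as well as $\Box$ simply spells out what the paper compresses into ``by duality.''
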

\begin{proof}
    By Proposition \ref{prop:Tfunctor-mor}, $\mathrm{T}(f)$ is a morphism in the category of Kleene triples. It remains to verify that $\mathrm{T}(f)$ preserves the modal operations. Since $f$ preserves $\Box$ and, by duality, also preserves $\Diamond$, we obtain:
    \begin{align*}
        \mathrm{T}(f)(\Box|x|) &= |f(\Box x)| = |\Box f(x)| = \Box \mathrm{T}(f)(|x|),\\
        \mathrm{T}(f)(\Diamond|x|) &= |f(\Diamond x)| = |\Diamond f(x)| = \Diamond \mathrm{T}(f)(|x|).
    \end{align*}
    Thus $\mathrm{T}(f)$ is a morphism of modal Kleene triples.
\end{proof}

The construction of the functor $\mathrm{T}$ extends naturally to the modal setting. More precisely, the assignments
\[
\A\mapsto\mathrm{T}(\A),\qquad
f\mapsto\mathrm{T}(f),
\]
define a functor $\mathrm{T}:\mathbf{mKl}\longrightarrow\mathbf{mKt}$.

Conversely, starting from a modal Kleene triple, we can construct a modal Kleene algebra via the twist construction. More precisely, we have the following theorem.

\begin{theorem}
\label{thm:twist-modal-kleene}
   Let $\T=\langle\D, \con, F\rangle$ be a modal Kleene triple. Then 
   \[\mathbf{K}(\T)=\langle\KC{D}, \cap, \cup, \sim, \boxn, \zero, \one\rangle\] is a modal Kleene algebra, where the modal operator is defined by
   \[
        \boxn(a,b) := (\Box a, \Diamond b).
    \]
    Moreover, the corresponding dual operator is given by $ \diamon(a,b) = (\Diamond a, \Box b)$.
\end{theorem}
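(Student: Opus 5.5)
By Proposition~\ref{prop:twist-kleene}, $\mathbf{K}(\T)$ is already a Kleene algebra. What remains is to show four things: that $\boxn$ maps $\KC{D}$ into itself, that it satisfies \eqref{mk}, \eqref{ax:Pos} and \eqref{ax:Con}, and that its dual is the stated $\diamon$.

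\textbf{Well-definedness.} Let $(a,b)\in\KC{D}$, so $a\notcon b$ and $a\vee b\in F$. If $\Box a\con\Diamond b$ held, then \eqref{ax:modal-contact} would give $a\con b$, a contradiction; hence $\Box a\notcon\Diamond b$. Condition~\eqref{cond:filter} gives $\Box a\vee\Diamond b\in F$ directly. So $(\Box a,\Diamond b)\in\KC{D}$.

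\textbf{Dual operator and \eqref{mk}.} The dual is computed directly:
\[
\sim\boxn\sim(a,b)=\sim(\Box b,\Diamond a)=(\Diamond a,\Box b).
\]
For \eqref{mk}, note that $(a,b)\cap(e,d)=(a\wedge e,b\vee d)$. Applying \eqref{ml1} in the first coordinate and \eqref{ml2} in the second gives $\boxn$ of a meet equal to $(\Box a\wedge\Box e,\Diamond b\vee\Diamond d)$, which is the meet of the two boxes.

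\textbf{\eqref{ax:Pos} and \eqref{ax:Con}.} Both are coordinatewise computations. The only nontrivial input is that for any $(a,b)\in\KC{D}$ we have $a\wedge b=0$ (Remark~\ref{rem:overlap}). Write $z=(e,d)$. Then
\[
z\vee\sim z=(e\vee d,\,0),\qquad z\wedge\sim z=(0,\,e\vee d).
\]
For \eqref{ax:Pos}: $\boxn(z\vee\sim z)=(\Box(e\vee d),\Diamond 0)=(\Box(e\vee d),0)$ by \eqref{ml3}. Its negation is $(0,\Box(e\vee d))$, which lies below it, so \eqref{ax:Pos} holds.

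For \eqref{ax:Con}, let $x=(a,b)$ and $s=e\vee d$. Then $x\vee(z\wedge\sim z)=(a,b\wedge s)$, so the left side of \eqref{ax:Con} is $(\Box a,\Diamond(b\wedge s))$. The right side is
\[
(\Box a,\Diamond b)\cup(\Diamond(b\wedge s),\Box a)=(\Box a\vee\Diamond(b\wedge s),\ \Diamond b\wedge\Box a).
\]
The inequality in the twist order requires two coordinate inequalities. First, $\Box a\le\Box a\vee\Diamond(b\wedge s)$, which is trivial. Second, $\Diamond b\wedge\Box a\le\Diamond(b\wedge s)$.

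\textbf{Main obstacle: the second-coordinate inequality.} Here we use that $\Box a\wedge\Diamond(b\wedge s)=0$, since $(\Box a,\Diamond(b\wedge s))$ is already known to lie in $\KC{D}$ by well-definedness. We also use $b\vee s\ge e\vee d\in F$, which puts $b\vee s\in F$; this is where the filter must be used.

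The plan is to apply \eqref{cond:filter} to a suitable pair to get an element of $F$, and then exploit the Boolean filter. One option is the pair $(a,b\wedge s)$ itself, which gives $\Box a\vee\Diamond(b\wedge s)\in F$. A more promising option is to write
\[
\Diamond b=\Diamond(b\wedge s)\vee\Diamond(b\wedge t),
\]
where $t$ is a complement-like element chosen so that $b=(b\wedge s)\vee(b\wedge t)$. Then show that $\Box a\wedge\Diamond(b\wedge t)=0$, using $\Box a\notcon\Diamond(\cdot)$ together with $a\vee b\in F$.

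This step is where I expect genuine difficulty. The inequality may only hold modulo the filter, and the proof may need to replace $\le$ by a verification that uses the uniqueness of "complements relative to $F$" in $\KC{D}$: if $(u,v),(u,v')\in\KC{D}$ with $v\le v'$, then $v=v'$. I would try to establish this auxiliary lemma first, and apply it with $v=\Diamond b\wedge\Box a$ and $v'=\Diamond(b\wedge s)$, after showing that both pairs $(\Box a\vee\Diamond(b\wedge s),\cdot)$ lie in $\KC{D}$.
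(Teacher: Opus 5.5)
Your well-definedness argument, the computation of $\diamon$, and the verifications of \eqref{mk} and \eqref{ax:Pos} are correct and agree with the paper. Your reduction of \eqref{ax:Con} to two coordinate inequalities is also right. The gap is that you never prove the second one, $\Diamond b\wedge\Box a\le\Diamond(b\wedge s)$. This is the same inequality the paper relies on in the last equality of its check of \eqref{ax:Con'}, namely $(\Diamond b\wedge\Box a)\vee\Diamond(b\wedge e)=\Diamond(b\wedge e)$.

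The missing observation is elementary: apply your own well-definedness step to $x=(a,b)$ itself, not to $(a,b\wedge s)$. Since $(a,b)\in\KC{D}$, you get $(\Box a,\Diamond b)\in\KC{D}$, hence $\Box a\notcon\Diamond b$. By Remark~\ref{rem:overlap} this forces $\Box a\wedge\Diamond b=0$. Therefore $\Diamond b\wedge\Box a=0\le\Diamond(b\wedge s)$, and \eqref{ax:Con} holds outright. You need no further use of $F$, of $b\vee s\in F$, or of a decomposition $b=(b\wedge s)\vee(b\wedge t)$.

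The fallback you sketch would not have worked, because the proposed auxiliary lemma is false. The lemma says that $(u,v),(u,v')\in\KC{D}$ with $v\le v'$ forces $v=v'$. For $F=D$, both $(0,0)$ and $(0,1)$ lie in $\KC{D}$ by \eqref{C0}. Even for a proper Boolean filter it fails: take the chain $0<m<1$ with the overlap relation and $F=\{m,1\}$; then both $(0,m)$ and $(0,1)$ lie in $\KC{D}$. The Boolean quotient only tells you that $v$ and $v'$ are congruent modulo $F$, since both are complements of the class of $u$ in $D/F$. That says nothing about an inequality between second coordinates in the twist order. So, as written, the proposal leaves \eqref{ax:Con} unproved, although the repair takes one line.
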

\begin{proof}
    By Proposition \ref{prop:twist-kleene}, $\langle\KC{D}, \cap, \cup, \sim, \zero, \one\rangle$ is a Kleene algebra.
   We first verify that the modal operators are well-defined. Take $(a, b) \in \KC{D}$, i.e., $a\notcon b$ and $a\vee b \in F$. Condition \eqref{ax:modal-contact} yields $\Box a\notcon \Diamond b$, and by \eqref{cond:filter} we have $\Box a\vee \Diamond b\in F$; hence $\boxn(a, b) \in \KC{D}$.
   
    It remains to verify that the modal operation satisfies the required properties. Let $(a,b), (e,d) \in \KC{D}$. Since Conditions \eqref{ml1} and \eqref{ml2} hold in $\D$, Equation \eqref{mk} follows directly.
    
    To complete the proof, it remains to verify that Conditions \eqref{ax:Pos} and \eqref{ax:Con'} are satisfied. To this end, observe that every element of $\KC{D}^{+}$ is of the form $(e,0)$. Hence, $\boxn (e,0)= ( \Box e,\Diamond0)=(\Box e,0)\in \KC{D}^{+}.$
 
    Now, let $(a,b), (c,d)\in \KC{D}$ and $(0,e)\in \KC{D}^{-}$. We need to prove that 
     \[
    \boxn ((a,b)\cup (0,e))= \boxn (a,b)\cup [\boxn ((a,b)\cup (0,e))\cap \sim\boxn ((a,b)\cup (0,e))].
    \]
    On the one hand, we have
    \[
    \boxn ((a,b)\cup (0,e))= \boxn(a, b\wedge e)=(\Box a, \Diamond(b \wedge e)),
    \]
    and on the other hand, using the fact the modal operators are isotone in $\D$, we obtain
    \begin{align*}
        \boxn (a,b)\cup \big((\Box a, \Diamond(b \wedge e))\cap \sim(\Box a, \Diamond(b \wedge e))\big)
        &= (\Box a,\Diamond b) \cup (0, \Box a\vee\Diamond(b \wedge e))\\
        &= (\Box a,\Diamond b\wedge(\Box a\vee\Diamond(b \wedge e)))\\
        &= (\Box a,(\Diamond b\wedge\Box a)\vee  \Diamond(b \wedge e))\\
        &= (\Box a, \Diamond(b \wedge e)).
    \end{align*}
    
    Since the two expressions coincide, we conclude that \eqref{ax:Con'} holds, and hence so does \eqref{ax:Con}. 
\end{proof}

We now show that a morphism of modal Kleene triples induces a homomorphism of the corresponding modal Kleene algebras.

\begin{proposition}\label{morfismo mkt}
    Let $\T_1=\langle \D_1, \con_1, F_1 \rangle$ and $\T_2=\langle \D_2, \con_2, F_2 \rangle$ be modal Kleene triples. If $f: \T_1 \rightarrow \T_2$ is a morphism in $\mathbf{mKt}$, then the function $\mathrm{K}(f): \mathbf{K}(\T_1) \rightarrow \mathbf{K}(\T_2)$ given by $\mathrm{K}(f)(a,b) := (f(a), f(b))$ is a morphism in the category $\mathbf{mKl}$ of modal Kleene algebras.
\end{proposition}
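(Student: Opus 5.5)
The plan is to reduce almost everything to Proposition~\ref{prop:Kfunctor-mor} and then check the single extra identity involving the modal operator. Since a morphism $f\colon\T_1\to\T_2$ in $\mathbf{mKt}$ is in particular a morphism of Kleene triples, Proposition~\ref{prop:Kfunctor-mor} already guarantees the following facts:
\begin{itemize}
\item $\mathrm{K}(f)$ is well defined, i.e.\ it maps $\KC{D_1}$ into $\KC{D_2}$;
\item $\mathrm{K}(f)$ preserves $\cap$, $\cup$, $\sim$, $\zero$ and $\one$.
\end{itemize}
For the record, well-definedness comes from two observations. If $a\notcon_1 b$, then $f(a)\notcon_2 f(b)$ by the contrapositive of the reflection condition on contact. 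If $a\vee b\in F_1$, then $f(a)\vee f(b)=f(a\vee b)\in F_2$ because $f[F_1]\subseteq F_2$. So the only genuinely new point is preservation of $\boxn$.

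For $\boxn$, take $(a,b)\in\KC{D_1}$ and compute both sides using the definition $\boxn(a,b)=(\Box a,\Diamond b)$ from Theorem~\ref{thm:twist-modal-kleene}. On one side,
\[
\mathrm{K}(f)(\boxn(a,b))=(f(\Box a),f(\Diamond b)).
\]
On the other side,
\[
\boxn(\mathrm{K}(f)(a,b))=(\Box f(a),\Diamond f(b)).
\]
By hypothesis $f$ is a morphism of modal lattices, so it commutes with both $\Box$ and $\Diamond$, and the two pairs coincide.

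Preservation of the dual operator $\diamon$ then follows in either of two ways: from $\diamon=\sim\boxn\sim$ together with the preservation of $\sim$, or directly from $\diamon(a,b)=(\Diamond a,\Box b)$.

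No step is a real obstacle. The only point needing care is that preservation of $\boxn$ requires $f$ to commute with \emph{both} $\Box$ and $\Diamond$, since the second coordinate of $\boxn$ uses $\Diamond$. This is exactly why morphisms in $\mathbf{mKt}$ are required to be morphisms of modal lattices, preserving both operators, rather than merely $\Box$-preserving maps.
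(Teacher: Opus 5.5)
Your proposal is correct and matches the paper's proof: the Kleene-algebra part is delegated to Proposition~\ref{prop:Kfunctor-mor}, and the modal part is the one-line computation $\mathrm{K}(f)(\boxn(a,b))=(f(\Box a),f(\Diamond b))=(\Box f(a),\Diamond f(b))=\boxn(\mathrm{K}(f)(a,b))$. Your extra remarks on well-definedness and on preservation of $\diamon$ are also correct, though the paper does not spell them out.
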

\begin{proof}    
    The preservation of Kleene operations follows from Proposition~\ref{prop:Kfunctor-mor}. It remains to check the modal operator:
    \[
        \mathrm{K}(f)(\boxn(a,b)) = (f(\Box a), f(\Diamond b)) = (\Box f(a), \Diamond f(b)) = \boxn(\mathrm{K}(f)(a,b)).
    \]
    Hence, $\mathrm{K}(f)$ is a morphism in $\mathbf{mKl}$.
\end{proof}

The functor $\mathrm{K}$ also extends naturally to the modal setting. Indeed, the assignments
\[ \mathcal{T}\mapsto\mathrm{K}(\mathcal{T}),\qquad f\mapsto\mathrm{K}(f), \] 
define a functor $\mathrm{K}\colon\mathbf{mKt}\to\mathbf{mKl}$.

The following theorems extend the isomorphisms established in \cite{jalali} to the setting of modal Kleene algebras, thereby contributing to the formulation of a categorical equivalence that incorporates the modal structure. First, we show that every modal Kleene algebra is isomorphic to the twist structure arising from its associated modal Kleene triple.

\begin{theorem}
\label{thm:beta-iso}
    Let $\A = \langle A, \wedge, \vee, \sim, \Box, 0, 1 \rangle$ be a modal Kleene algebra, and let $\mathrm{T}(\A)=\langle\A/\A^{-}, \con_{\A},A^{+}  / A^{-} \rangle$ be its associated modal Kleene triple. Then the map
    \[
        \beta_\A \colon \A \to \mathbf{K}(\mathrm{T}(\A)), \quad \beta_\A(x) := (|x|, |\sim x|)
    \]
    is an isomorphism of modal Kleene algebras.
\end{theorem}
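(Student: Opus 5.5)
The proof will lean on Jalali's non-modal result, cited in the preliminaries: for any Kleene algebra $\A$, the map $\beta_\A(x)=(|x|,|\sim x|)$ is an isomorphism of Kleene algebras from $\A$ onto $\mathbf{K}(\mathrm{T}(\A))$ (see \cite{jalali,viglizzo99algebras}). By Theorem~\ref{thm:cociente-modal}, $\mathrm{T}(\A)$ is a modal Kleene triple. By Theorem~\ref{thm:twist-modal-kleene}, $\mathbf{K}(\mathrm{T}(\A))$ is then a modal Kleene algebra with $\boxn(|x|,|y|)=(\Box|x|,\Diamond|y|)$. So $\beta_\A$ is already a well-defined bijection preserving $\wedge,\vee,\sim,0,1$. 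What remains is to check that it commutes with the modal operator. A bijective homomorphism is an isomorphism, since its inverse automatically preserves all operations.

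For the modal step, I compute both sides for $x\in A$:
\[
\beta_\A(\Box x)=(|\Box x|,|\sim\Box x|),\qquad \boxn\beta_\A(x)=(\Box|x|,\Diamond|\sim x|)=(|\Box x|,|\Diamond\sim x|).
\]
By definition $\Diamond\sim x=\sim\Box\sim\sim x=\sim\Box x$, using (Inv). Hence the two pairs coincide. This uses only that $\Box|x|:=|\Box x|$ and $\Diamond|x|:=|\Diamond x|$ are well defined on $A/A^-$, which is the theorem preceding Theorem~\ref{thm:cociente-modal}.

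For completeness, I would sketch the Kleene part in case it has to be reproved rather than cited.

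\textbf{Well-definedness.} The fact that $\beta_\A(x)\in\KC{A/A^-}$ is exactly the last clause of Proposition~\ref{prop:Tfunctor}. Preservation of the lattice operations and $\sim$ is immediate: for instance, $\beta_\A(x\wedge y)=(|x|\wedge|y|,|\sim x\vee\sim y|)$.

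\textbf{Injectivity.} If $|x|=|y|$ and $|\sim x|=|\sim y|$, pick $n,m\in A^-$ with $x\vee n=y\vee n$ and $\sim x\vee m=\sim y\vee m$. The standard Kleene argument then uses normality to get $x=y$: in a Kleene algebra, $x$ is determined by $x\vee n$ and $x\wedge\sim n$, and one combines the two equations.

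\textbf{Surjectivity.} This is the main obstacle. Given $(|a|,|b|)$ with $|a|\notcon_\A|b|$ and $|a\vee b|\in A^+/A^-$, choose representatives $a'\le\sim b'$. The candidate preimage is something like $x=(a'\vee(b'\wedge\sim b'))\wedge\sim b'$ or a similar combination, adjusted using the positive element witnessing $|a\vee b|\in A^+/A^-$. One then verifies $|x|=|a|$ and $|\sim x|=|b|$.

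I expect to cite Jalali for both injectivity and surjectivity rather than redo them, so the genuinely new content is the one-line modal computation above.
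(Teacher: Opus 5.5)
Your proposal is correct and matches the paper's proof: you cite Jalali for the Kleene-algebra isomorphism, with Proposition~\ref{prop:Tfunctor} for well-definedness, and then verify the single identity $\beta_\A(\Box x)=(|\Box x|,|\sim\Box x|)=(|\Box x|,|\Diamond\sim x|)=\boxn\beta_\A(x)$ via $\Diamond\sim x=\sim\Box x$. Your optional sketches of injectivity and surjectivity are not needed, and the surjectivity one is too vague to stand on its own; but since you defer to Jalali there exactly as the paper does, nothing is missing.
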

\begin{proof}
    Proposition~\ref{prop:Tfunctor} guarantees that $\beta_\A$ is well defined.  Jalali \cite{jalali} proved that $\beta_\A$ is an isomorphism of Kleene algebras; therefore it suffices to show that it preserves the modal operator, which follows directly from the definitions:
    \begin{align*}
        \beta_\A(\Box x) &= (|\Box x|, |\sim \Box x|) = (|\Box x|, |\Diamond (\sim x)|) = \boxn  \beta_\A(x).
    \end{align*}  
\end{proof}

Conversely, starting from a modal Kleene triple, its associated twist algebra, when quotiented by its negative elements, recovers the original triple up to isomorphism.

\begin{theorem}
\label{thm:alpha-iso}
    Let $\T=\langle\D, \con, F\rangle$ be a modal Kleene triple and let $\mathbf{K}(\T)$ be its associated modal Kleene algebra. Then the map $\alpha_\T \colon \mathrm{T}(\mathbf{K}(\T))\to \T$ given by
    \[
        \alpha_\T(|(a,b)|) := a
    \]
    is an isomorphism of modal bounded distributive lattices satisfying
    \[
        |(a,b)|\con_{\mathbf{K}(\T)} |(c,d)| \;\Longleftrightarrow\; a\con c,
    \]
    and $\alpha_\T[\KC{D}^+/\KC{D}^-]=F$.
\end{theorem}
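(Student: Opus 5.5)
The plan is to reduce to Jalali's non-modal result and then check only the modal operators. In \cite{jalali} (see also \cite{viglizzo99algebras}) it is shown that for a Kleene triple $\T$ the map $\alpha_\T\colon |(a,b)|\mapsto a$ is a well-defined isomorphism of bounded distributive lattices $\KC{D}/\KC{D}^-\to D$. It transforms $\con_{\mathbf{K}(\T)}$ into $\con$ and carries $\KC{D}^+/\KC{D}^-$ onto $F$. Since a modal Kleene triple is in particular a Kleene triple, and $\mathbf{K}(\T)$ has the same Kleene reduct as in Proposition~\ref{prop:twist-kleene}, all of these facts transfer verbatim. What remains is that $\alpha_\T$ preserves $\Box$ and $\Diamond$, where $\Box$ and $\Diamond$ on the quotient are induced from $\boxn$ and $\diamon$ as in the theorem on the quotient $\mathbf{A/A^-}$.

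To keep the argument self-contained, I would first recall why $\alpha_\T$ is well defined. The negative elements of $\mathbf{K}(\T)$ are exactly the pairs $(0,e)$, since $(a,b)\le(b,a)$ forces $a\le b$ and hence $a=a\wedge b=0$ by Remark~\ref{rem:overlap}. Consequently, if $(a,b)\cup(0,e)=(c,d)\cup(0,e)$, comparing first coordinates gives $a=c$. For surjectivity, given $a\in D$, condition \eqref{cond:existence} provides $b$ with $(a,b)\in\KC{D}$. Injectivity uses that $(a,b)\cup(0,0)=(a,0)$ whenever $(0,0)\in\KC{D}$. In general one instead takes $n=(0,b\wedge d)$, using $b\wedge d$ with $a\vee(b\wedge d)\in F$ from Booleanness of $F$; this is the delicate part of Jalali's proof, and I will simply cite it.

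The modal step is then a direct computation:
\[
\alpha_\T(\Box|(a,b)|)=\alpha_\T(|\boxn(a,b)|)=\alpha_\T(|(\Box a,\Diamond b)|)=\Box a=\Box\,\alpha_\T(|(a,b)|).
\]
Similarly, using the dual operator $\diamon(a,b)=(\Diamond a,\Box b)$ from Theorem~\ref{thm:twist-modal-kleene},
\[
\alpha_\T(\Diamond|(a,b)|)=\Diamond a=\Diamond\,\alpha_\T(|(a,b)|).
\]
Both computations are legitimate because the operators on the quotient are well defined by the theorem on $\mathbf{A/A^-}$.

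The contact and filter statements also follow quickly. If $|(a,b)|\notcon_{\mathbf{K}(\T)}|(c,d)|$, there are representatives $(a,b')\le\sim(c,d')=(d',c)$, so $a\le d'$ and $d'\notcon c$. By \eqref{C3} and \eqref{C2} this gives $a\notcon c$. Conversely, if $a\notcon c$, I would produce representatives $(a,\cdot)$ and $(c,\cdot)$ with $(a,\cdot)\le(\cdot,c)$, for instance $(a,c)$ itself, adjusted into $\KC{D}$ via the Boolean filter; this again follows Jalali. For the filter, the positive elements are the pairs $(e,0)$ with $e\in F$, so their image is $F$. I expect the only real obstacle to be these non-modal bookkeeping steps, which are inherited from \cite{jalali}; the modal part is immediate.
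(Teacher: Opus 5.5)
Your proposal is correct and is essentially the paper's proof: the non-modal facts (lattice isomorphism, contact correspondence, $\alpha_\T[\KC{D}^+/\KC{D}^-]=F$) are cited from Jalali, and preservation of $\Box$ and $\Diamond$ is the same direct computation using $\boxn(a,b)=(\Box a,\Diamond b)$ and $\diamon(a,b)=(\Diamond a,\Box b)$. One slip in your optional aside: $(0,b\wedge d)$ need not belong to $\KC{D}$, because $b\wedge d$ need not lie in $F$; the correct injectivity witness is $(0,a\vee(b\wedge d))$, which lies in $\KC{D}$ since $a\vee(b\wedge d)=(a\vee b)\wedge(a\vee d)\in F$ (no Booleanness is needed), and similarly the converse contact direction uses only the existence condition and (C4), e.g.\ via the representatives $(a,b_0\vee c)$ and $(c,d_0\vee a)$.
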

\begin{proof}
    That $\alpha_\T$ is a lattice isomorphism which satisfies the two additional properties is established in \cite{jalali}.  To see that $\alpha_\T$ respects the modal structure, we compute
    \[
        \alpha_\T(|\boxn (a, b)|) = \alpha_\T(| (\Box a, \Diamond b)|) = \Box a = \Box \alpha_\T(|(a,b)|),
    \]
    and similarly $\alpha_\T(|\diamon (a,b)|) = \Diamond a = \Diamond \alpha_\T(|(a,b)|)$. Thus $\alpha_\T$ commutes with the modal operators and is therefore an isomorphism in the category $\mathbf{mKt}$.
\end{proof}

\begin{theorem}\label{teorema equivalencia}
The functors $\mathrm{K}:\mathbf{mKt}\to\mathbf{mKl}$ and $\mathrm{T}:\mathbf{mKl}\to\mathbf{mKt}$
establish a categorical equivalence between the categories $\mathbf{mKt}$ and $\mathbf{mKl}$.
\end{theorem}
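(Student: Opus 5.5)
The plan is to show that $\beta=(\beta_\A)_{\A}$ is a natural isomorphism $\mathrm{Id}_{\mathbf{mKl}}\Rightarrow \mathrm{K}\circ\mathrm{T}$ and that $\alpha=(\alpha_\T)_{\T}$ is a natural isomorphism $\mathrm{T}\circ\mathrm{K}\Rightarrow \mathrm{Id}_{\mathbf{mKt}}$. Functoriality of $\mathrm{K}$ and $\mathrm{T}$ on the modal categories is already available from Propositions~\ref{moorfismos mkl} and~\ref{morfismo mkt}. Theorems~\ref{thm:beta-iso} and~\ref{thm:alpha-iso} show that each component $\beta_\A$ is an isomorphism in $\mathbf{mKl}$ and each $\alpha_\T$ is an isomorphism of modal lattices. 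So what remains is to check that each $\alpha_\T$ is an isomorphism in $\mathbf{mKt}$, and that both families are natural.

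First I make the morphism condition for $\alpha_\T$ explicit. By Theorem~\ref{thm:alpha-iso}, $\alpha_\T$ preserves and reflects the contact relation, since $|(a,b)|\con_{\mathbf{K}(\T)}|(c,d)|$ iff $a\con c$. It also maps the filter $\KC{D}^+/\KC{D}^-$ onto $F$. Hence both $\alpha_\T$ and $\alpha_\T^{-1}$ are bounded modal lattice homomorphisms that send filter into filter and reflect contact. Therefore both are morphisms of $\mathbf{mKt}$, and $\alpha_\T$ is an isomorphism there.

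Next I check naturality.

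For $\beta$, take a morphism $f\colon\A\to\A'$ in $\mathbf{mKl}$. Then
\[
\mathrm{K}(\mathrm{T}(f))(\beta_\A(x))=(|f(x)|,|f(\sim x)|)=(|f(x)|,|\sim f(x)|)=\beta_{\A'}(f(x)),
\]
using that $f$ preserves $\sim$.

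For $\alpha$, take a morphism $g\colon\T_1\to\T_2$ in $\mathbf{mKt}$. Then
\[
\alpha_{\T_2}\bigl(\mathrm{T}(\mathrm{K}(g))(|(a,b)|)\bigr)=\alpha_{\T_2}(|(g(a),g(b))|)=g(a)=g(\alpha_{\T_1}(|(a,b)|)).
\]

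Both computations are immediate from the definitions. They rely only on $\alpha_\T$ being well defined on classes, which Theorem~\ref{thm:alpha-iso} provides.

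I expect no real obstacle. The only delicate point is that $\mathbf{mKt}$-morphisms are defined by reflecting contact rather than preserving it, so the inverse of an isomorphism must be checked separately. That is exactly why the biconditional in Theorem~\ref{thm:alpha-iso} is needed, together with the equality $\alpha_\T[\KC{D}^+/\KC{D}^-]=F$ rather than a mere inclusion. With the two natural isomorphisms in hand, $\mathrm{K}$ and $\mathrm{T}$ are quasi-inverse, which gives the equivalence.
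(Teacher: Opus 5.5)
Your proof is correct and is essentially the paper's argument. The paper builds the equivalence from the same ingredients, the functoriality results and the component isomorphisms of Theorems~\ref{thm:beta-iso} and~\ref{thm:alpha-iso}, and simply inherits naturality from Jalali's non-modal equivalence; you instead verify the two naturality squares directly and check explicitly that $\alpha_\T^{-1}$ is an $\mathbf{mKt}$-morphism, which is exactly where the contact biconditional and the equality $\alpha_\T[\KC{D}^+/\KC{D}^-]=F$ are used.
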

\begin{proof}
The result follows immediately from the categorical equivalence in the non-modal setting together with Theorems \ref{thm:cociente-modal}, \ref{thm:twist-modal-kleene}, Propositions \ref{moorfismos mkl}, \ref{morfismo mkt} and Theorems \ref{thm:beta-iso} and \ref{thm:alpha-iso}, which show that the constructions defining the functors preserve the modal structure.
\end{proof}

Although normality is not required for the representation theorems, the modal operators most commonly studied in the literature are normal, that is, they satisfy $\Box 1=1$. This is the case, for instance, in normal modal logic. It is therefore natural to ask how this additional condition is reflected by the representation.

\begin{proposition}
Let $\T=\langle\D,\con,F\rangle$ be a modal Kleene triple such that the modal lattice $\D$ satisfies $\Box 1=1$. Then the associated modal Kleene algebra $\mathrm{K}(\T)$ satisfies $\boxn \one=\one$.
\end{proposition}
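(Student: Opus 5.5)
The plan is a direct computation in the twist structure, using the definition of $\boxn$ from Theorem~\ref{thm:twist-modal-kleene}. Recall that in $\mathbf{K}(\T)$ the top element is $\one=(1,0)$. First I will check that $\one$ does belong to $\KC{D}$. We have $1\notcon 0$ by \eqref{C0} together with \eqref{C2}, and $1\vee 0=1\in F$ because $F$ is a filter. This is already part of Proposition~\ref{prop:twist-kleene}, so it is routine.

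Next I will compute
\[
\boxn\one=\boxn(1,0)=(\Box 1,\Diamond 0).
\]
The first coordinate equals $1$ by the hypothesis $\Box 1=1$. The second coordinate equals $0$ by axiom \eqref{ml3}, which holds in every modal lattice. Hence $\boxn\one=(1,0)=\one$.

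There is no real obstacle here. The only point to note is that \eqref{ml3} ($\Diamond 0=0$) is built into the definition of a modal lattice, so normality of $\Diamond$ at $0$ comes for free. Only the condition $\Box 1=1$ needs to be assumed. Optionally, I would also remark that dually $\diamon\zero=(\Diamond 0,\Box 1)=(0,1)=\zero$.
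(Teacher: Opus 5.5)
Your proposal is correct and matches the paper's proof, which just says the claim follows immediately from the definition $\boxn(a,b)=(\Box a,\Diamond b)$. Your computation $\boxn(1,0)=(\Box 1,\Diamond 0)=(1,0)$, using the hypothesis $\Box 1=1$ and \eqref{ml3}, is that same verification written out explicitly.
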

\begin{proof}
It follows immediately from the definition of the modal operators on $\mathrm{K}(\T)$. 
\end{proof}

\begin{proposition}
Let $\A$ be a modal Kleene algebra satisfying $\Box 1=1$. Then the underlying modal lattice $\mathbf{A/A^{-}}$ of $\mathrm{T}(\A)$ satisfies $\Box |1|=|1|$.
\end{proposition}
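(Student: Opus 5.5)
This is essentially immediate from the definition of the modal operator on the quotient. By the theorem establishing that $\mathbf{A/A^{-}}$ is a modal lattice, the operator on the quotient is well defined and given by $\Box|x| := |\Box x|$ for every $x \in A$.

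Applying this with $x = 1$ and using the hypothesis $\Box 1 = 1$ in $\A$ gives
\[
\Box|1| = |\Box 1| = |1|.
\]
It is also worth recording that $|1|$ is the top element of $\mathbf{A/A^{-}}$. The canonical map $x \mapsto |x|$ is a bounded lattice homomorphism, since $A^{-}$ is an ideal and the relation \eqref{eq:nega-quotient} is a lattice congruence. Hence the statement says precisely that the modal operator of $\mathrm{T}(\A)$ is normal.

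There is no real obstacle. The only point needing care, the compatibility of the equivalence relation with $\Box$, has already been settled by \eqref{ax:Con'} in that earlier theorem, so the computation above is legitimate as it stands.
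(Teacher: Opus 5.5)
Your proof is correct and is the same as the paper's, which also just computes $\Box|1| = |\Box 1| = |1|$ from the definition of the induced operator on $\mathbf{A/A^{-}}$ and the hypothesis $\Box 1 = 1$. Your extra remarks, that $|1|$ is the top element and that well-definedness was already settled via \eqref{ax:Con'}, are accurate but not needed for the computation.
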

\begin{proof}
By the definition of the modal operator on $\mathbf{A/A^{-}}$, $\Box|1|=|\Box1|=|1|$,
where the last equality follows from the assumption $\Box1=1$.
\end{proof}

Let $\mathbf{NmKl}$ be the full subcategory of $\mathbf{mKl}$ whose objects are normal modal Kleene algebras, and let $\mathbf{NmKt}$ be the full subcategory of $\mathbf{mKt}$ whose objects are modal Kleene triples whose underlying modal lattice satisfies $\Box1=1$. As an immediate corollary of the categorical equivalence, we obtain the following.

\begin{corollary}
The functors $\mathrm{K}:\mathbf{mKt}\to\mathbf{mKl}$ and $\mathrm{T}:\mathbf{mKl}\to\mathbf{mKt}$ restrict to functors $\mathrm{K}:\mathbf{NmKt}\to\mathbf{NmKl}$ and $\mathrm{T}:\mathbf{NmKl}\to\mathbf{NmKt}$, which establish a categorical equivalence between $\mathbf{NmKt}$ and $\mathbf{NmKl}$.
\end{corollary}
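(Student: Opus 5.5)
The statement is a restriction of Theorem~\ref{teorema equivalencia} to full subcategories, so the plan is to check that both functors map the subcategories into each other and then inherit everything else from the ambient equivalence.

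\emph{Step 1: the functors restrict.} By the two preceding propositions, $\mathrm{K}$ sends objects of $\mathbf{NmKt}$ to objects of $\mathbf{NmKl}$: if $\Box 1=1$ in $\D$, then $\boxn\one=(\Box 1,\Diamond 0)=(1,0)=\one$, using \eqref{ml3}. Likewise $\mathrm{T}$ sends objects of $\mathbf{NmKl}$ to objects of $\mathbf{NmKt}$, since $\Box|1|=|\Box 1|=|1|$ and $|1|$ is the top of $\mathbf{A/A^{-}}$. Both subcategories are full, so a morphism between objects of a subcategory is the same thing as a morphism in $\mathbf{mKt}$ or $\mathbf{mKl}$. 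Hence the action of $\mathrm{K}$ and $\mathrm{T}$ on morphisms lands in the subcategories automatically, and functoriality is inherited.

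\emph{Step 2: the natural isomorphisms restrict.} For $\A$ in $\mathbf{NmKl}$, Theorem~\ref{thm:beta-iso} gives an isomorphism $\beta_\A\colon \A\to\mathbf{K}(\mathrm{T}(\A))$ in $\mathbf{mKl}$. Both its domain and codomain lie in $\mathbf{NmKl}$ by Step 1, and fullness makes $\beta_\A$ an isomorphism in $\mathbf{NmKl}$. Similarly, for $\T$ in $\mathbf{NmKt}$, the map $\alpha_\T$ of Theorem~\ref{thm:alpha-iso} is an isomorphism in $\mathbf{mKt}$ between objects of $\mathbf{NmKt}$, hence an isomorphism in $\mathbf{NmKt}$. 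Naturality squares are equations between morphisms, so they hold in the subcategories because they hold in the ambient categories.

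\emph{Main obstacle.} The only potential subtlety is that the components of the natural isomorphisms, and their inverses, must be morphisms within the subcategories. Fullness settles this: being a morphism in $\mathbf{NmKl}$ or $\mathbf{NmKt}$ imposes no requirement beyond being a morphism in $\mathbf{mKl}$ or $\mathbf{mKt}$. In particular, no separate argument that morphisms preserve $\Box 1$ is needed.
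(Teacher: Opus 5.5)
Your proposal is correct and takes essentially the paper's route. The paper treats the statement as immediate from the two preceding propositions (normal objects go to normal objects under $\mathrm{K}$ and $\mathrm{T}$) together with the ambient equivalence. You additionally spell out why fullness of $\mathbf{NmKt}$ and $\mathbf{NmKl}$ lets the functors and the natural isomorphisms $\alpha$, $\beta$ restrict, which the paper leaves implicit.
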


 \subsection{Centered modal Kleene algebras}\label{subsec:centered}

The centered case deserves special attention for two reasons. First, every modal Kleene algebra can be embedded into a modal Kleene algebra with a center. Second, its axiomatization becomes significantly simpler. Since centered modal Kleene algebras are considered in the expanded signature obtained by adding the distinguished constant $\cn$, Conditions~\eqref{ax:Pos} and~\eqref{ax:Con} can be replaced by a single identity.

 \begin{lemma}[\cite{cignoli1986class}]
 \label{lem:pos-eq-quotient}
     Let $\A$ be a modal Kleene algebra. Then $\A$ has a center if and only if $A/A^{-} = A^{+} /A^{-}$.
 \end{lemma}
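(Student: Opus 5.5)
We prove both implications directly, using only the Kleene reduct of $\A$; the modal operator plays no role. Throughout, $|x|=|y|$ means $x\vee n=y\vee n$ for some $n\in A^{-}$, as in \eqref{eq:nega-quotient}. Note that $A^{+}/A^{-}\subseteq A/A^{-}$ always holds, so the content of the statement is the reverse inclusion.

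($\Rightarrow$) Suppose $\cn=\sim\cn$. Then $\cn\le\sim\cn$, so $\cn\in A^{-}$, and dually $\cn\in A^{+}$. The key step is the following claim: for every $x\in A$,
\[
|x| = |x\vee \cn| \quad\text{and}\quad x\vee\cn\in A^{+}.
\]
The first equality holds because $x\vee\cn = (x\vee\cn)\vee\cn$ with $\cn\in A^{-}$. For the second, compute $\sim(x\vee\cn)=\sim x\wedge\cn\le\cn\le x\vee\cn$. Hence every class $|x|$ contains a positive element, which gives $A/A^{-}=A^{+}/A^{-}$.

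($\Leftarrow$) Assume $A/A^{-}=A^{+}/A^{-}$. Then $|0|\in A^{+}/A^{-}$, so there are $p\in A^{+}$ and $n\in A^{-}$ with $0\vee n=p\vee n$, that is, $p\le n$. Set $\cn:=n$. From $p\in A^{+}$ and $p\le n$ we get $\sim n\le\sim p\le p\le n$. Combined with $n\le\sim n$, since $n\in A^{-}$, this yields $n=\sim n$, so $\cn$ is a center.

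Both directions are elementary. The only mildly delicate point is in ($\Leftarrow$): one has to choose the right class to exploit, and $|0|$ is the natural choice because its representation $0\vee n=p\vee n$ immediately forces $p\le n$. The uniqueness of the center, noted earlier in the paper, is not needed for this argument.
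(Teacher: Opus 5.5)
Your proof is correct. The paper gives no argument of its own for this lemma; it only cites Cignoli. So your verification supplies a complete, self-contained proof rather than competing with one in the text.

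Both halves check out:
\begin{itemize}
\item In the forward direction, $x\vee\cn$ is a positive representative of $|x|$, with witness $n=\cn\in A^{-}$.
\item In the converse, any witness $n\in A^{-}$ for $|0|=|p|$ with $p\in A^{+}$ satisfies $\sim n\le\sim p\le p\le n\le\sim n$, so it is the center.
\end{itemize}

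Your route also makes explicit something the paper's phrasing obscures: neither $\Box$ nor the axioms (Pos) and (Con) play any role. The lemma is purely about the Kleene reduct. The Kleene inequality enters only through the background fact that $A^{-}$ is an ideal, which is what makes $A/A^{-}$ a lattice quotient in the first place.
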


If we consider $F = D$, then any modal contact lattice $\langle \D, \con \rangle$ gives rise to the triple $\mathcal{T}=\langle \D, \con, D \rangle$. Conditions \eqref{cond:existence} and \eqref{cond:filter} are trivially satisfied in this case, so the triple is a modal Kleene triple. Moreover, $\mathbf{K}(\mathcal{T})$ has a center, the element $(0,0)$.

Let $\A$ be a modal Kleene algebra and let $\mathcal{T}=\langle \A/\A^{-}, \con_\A,A/A^{-}\rangle$. Since $\mathbf{K}(\mathrm{T}(\A))$ is a subalgebra of $\mathbf{K}(\mathcal{T})$ and $\mathbf{K}(\mathrm{T}(\A))\cong\A$, it follows that $\A$ embeds into the modal Kleene algebra $\mathbf{K}(\mathcal{T})$. Moreover, by the previous observation, $\mathbf{K}(\mathcal{T})$ has a center.

In this context, the filter component becomes redundant. This allows us to replace modal Kleene triples by modal contact lattices. As a consequence, we obtain the following categorical equivalence between centered modal Kleene algebras and modal contact lattices. The \emph{category of centered modal Kleene algebras}, denoted by $\mathbf{cmKl}$, is the category whose objects are centered modal Kleene
algebras and whose morphisms are homomorphisms of centered modal Kleene algebras. The \emph{category of modal contact lattices}, denoted by $\mathbf{mCL}$, is the category whose objects are modal contact lattices and whose morphisms $f:\langle\D_1,\con_1\rangle\to\langle\D_2,\con_2\rangle$ are morphisms of modal bounded distributive lattices such that
\[
f(a)\,\con_2\,f(b)\Longrightarrow a\,\con_1\,b.
\]
We define the functor $\mathrm{L}:\mathbf{cmKl}\longrightarrow\mathbf{mCL}$
by setting $\mathrm{L}(\A):=\langle \A/\A^{-},\con_\A\rangle$ on objects. On morphisms, if $f:\A\to\A'$
is a homomorphism of centered modal Kleene algebras, then $\mathrm{L}(f)(|x|):=|f(x)|$.

Conversely, we define the functor $\mathrm{K}:\mathbf{mCL}\longrightarrow\mathbf{cmKl}$
by setting for $\mathcal{L}=\langle\D,\con\rangle $, $K(\mathcal{L}):=\langle K_{\con}(D),\cap,\cup,\sim ,\boxn,\cbf,\zero,\one\rangle$ where 
\[
K_{\con}(D)=\{(a,b)\in D\times D: a\notcon b\},
\]
and the operations are those defined in \ref{eq:twist-operations} and Theorem \ref{thm:twist-modal-kleene}. On morphisms, if $f:\langle\D_1,\con_1\rangle\to\langle\D_2,\con_2\rangle$
is a morphism of modal contact lattices, then
\[
\mathrm{K}(f)(a,b):=(f(a),f(b)).
\]

\begin{theorem}
    The functors $\mathrm{K}:\mathbf{mCL}\longrightarrow\mathbf{cmKl}$ and $\mathrm{L}:\mathbf{cmKl}\longrightarrow\mathbf{mCL}$ establish a categorical equivalence between the category $\mathbf{mCL}$ of modal contact lattices and the category $\mathbf{cmKl}$ of centered modal Kleene algebras.
\end{theorem}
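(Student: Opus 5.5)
The plan is to reduce the statement to the equivalence of Theorem~\ref{teorema equivalencia}, restricted to the full subcategory of modal Kleene triples whose filter is the whole lattice. Consider the assignment $\langle\D,\con\rangle\mapsto\langle\D,\con,D\rangle$. As observed above, it sends modal contact lattices to modal Kleene triples. On morphisms it is bijective: a morphism of modal contact lattices automatically satisfies $f[D_1]\subseteq D_2$. Hence $\mathbf{mCL}$ is isomorphic to the full subcategory $\mathbf{mKt}_D$ of $\mathbf{mKt}$ consisting of triples with $F=D$.

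On the algebraic side, there is a forgetful assignment $\mathbf{cmKl}\to\mathbf{mKl}$. It is fully faithful, because a homomorphism of modal Kleene algebras between centered algebras preserves the center automatically: $f(\cn)=f(\sim\cn)=\sim f(\cn)$, and the center is unique. So $\mathbf{cmKl}$ can be identified with the full subcategory of $\mathbf{mKl}$ of algebras having a center. It then suffices to show that the functors $\mathrm{K}$ and $\mathrm{T}$ of Theorem~\ref{teorema equivalencia} restrict to these two full subcategories. A restriction of an equivalence to full subcategories that are mapped into each other is again an equivalence, since the unit and counit components $\beta_\A$ and $\alpha_\T$ remain isomorphisms there.

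The key steps, in order, are as follows.
\begin{enumerate}
\item For $\T=\langle\D,\con,D\rangle$, the universe $\KC{D}$ equals $K_{\con}(D)$, because the condition $a\vee b\in D$ is vacuous. The element $(0,0)$ belongs to it by \eqref{C0} and is fixed by $\sim$. Hence $\mathbf{K}(\T)$ is centered and coincides with the functor $\mathrm{K}:\mathbf{mCL}\to\mathbf{cmKl}$ defined above, with $\cbf=(0,0)$.
\item For a centered $\A$, Lemma~\ref{lem:pos-eq-quotient} gives $A^{+}/A^{-}=A/A^{-}$. Therefore $\mathrm{T}(\A)=\langle\A/\A^{-},\con_\A,A/A^{-}\rangle$, which is exactly $\mathrm{L}(\A)$ followed by the identification of step 1. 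The action on morphisms matches by definition.
\item The natural isomorphisms $\beta_\A$ (Theorem~\ref{thm:beta-iso}) and $\alpha_\T$ (Theorem~\ref{thm:alpha-iso}) then give natural isomorphisms $\mathrm{K}\mathrm{L}\cong\mathrm{Id}$ and $\mathrm{L}\mathrm{K}\cong\mathrm{Id}$.
\end{enumerate}
Two checks are needed in step 3. First, $\alpha_\T$ is an isomorphism of modal contact lattices: it is a modal lattice isomorphism, and it satisfies $|(a,b)|\con_{\mathbf{K}(\T)}|(c,d)|\iff a\con c$, so both $\alpha_\T$ and its inverse reflect contact. Second, $\beta_\A$ preserves the center. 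This is automatic, since $\beta_\A$ is a bijective Kleene homomorphism and centers are unique.

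I expect the main obstacle to be bookkeeping rather than computation. One must make sure the signature with the constant $\cn$ causes no mismatch, both in the morphisms and in the claim that the subcategories are full. One must also confirm that $\mathrm{L}(f)$ reflects contact for morphisms in $\mathbf{cmKl}$. This is inherited from Proposition~\ref{prop:Tfunctor-mor} together with Proposition~\ref{moorfismos mkl}. All the remaining verifications are supplied by the earlier theorems.
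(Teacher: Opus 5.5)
Your proposal is correct and follows essentially the same route as the paper: restrict the equivalence of Theorem~\ref{teorema equivalencia} to the triples with $F=D$ and to the algebras having a center, with $\mathrm{K}(f)$ and $\beta_\A$ preserving the center and $\beta_\A$, $\alpha_\T$ remaining the natural isomorphisms. You are more explicit than the paper on two points: you use Lemma~\ref{lem:pos-eq-quotient} to identify $\mathrm{T}(\A)$ with $\mathrm{L}(\A)$, and you use uniqueness of the center to justify that the subcategory is full and that $\cn$ is preserved.
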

\begin{proof}
The functors $\mathrm{K}$ and $\mathrm{L}$ are well defined on the categories
$\mathbf{mCL}$ and $\mathbf{cmKl}$. Indeed, if
$\mathcal{L}=\langle\D,\con\rangle$, then
$\mathrm{K}(\mathcal{L})$ is obtained from
$\mathbf{K}(\mathcal{T})$, where
$\mathcal{T}=\langle\D,\con,D\rangle$, by expanding it with the constant
$\cbf=(0,0)$. Moreover, if $f$ is a morphism of modal contact lattices, then
$\mathrm{K}(f)(0,0)=(0,0)$,
so $\mathrm{K}(f)$ is a homomorphism of centered modal Kleene algebras.

Finally, the natural isomorphisms of
Theorems~\ref{thm:beta-iso} and~\ref{thm:alpha-iso}
remain valid in the expanded signature, since they preserve the distinguished
constant. Therefore, $\mathrm{K}$ and $\mathrm{L}$ establish a categorical
equivalence between $\mathbf{mCL}$ and $\mathbf{cmKl}$.
\end{proof}

Besides simplifying the representation, the presence of a distinguished center also simplifies the equational axiomatization. 

 \begin{proposition}
     Let $\mathbf{A}=\langle A,\wedge,\vee,\sim,\Box,\cn,0,1\rangle$ be a centered modal Kleene algebra that satisfies \eqref{mk}. Then, $\mathbf{A}$ satisfies Conditions \eqref{ax:Pos} and \eqref{ax:Con} if and only if it satisfies the following equation:
    \begin{equation}
        \Box(x\vee \cn)=\Box x\vee \cn.\tag{Cen}\label{centered}
    \end{equation} 
 \end{proposition}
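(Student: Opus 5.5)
The plan is to use two standard facts about a centered Kleene algebra with center $\cn$. First, $x\wedge\sim x\le \cn\le y\vee\sim y$ for all $x,y$. Second, an element $p$ is positive iff $\cn\le p$, and $n$ is negative iff $n\le\cn$; indeed, if $\cn\le p$ then $\sim p\le\sim\cn=\cn\le p$. I also use that \eqref{mk} makes $\Box$ isotone. Then I verify the two implications separately.

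For the forward direction, assume \eqref{ax:Pos} and \eqref{ax:Con}, and hence \eqref{ax:Con'}. Taking $z=\cn$ in \eqref{ax:Pos} gives $z\vee\sim z=\cn$, so $\sim\Box\cn\le\Box\cn$. Thus $\Box\cn$ is positive, so $\cn\le\Box\cn\le\Box(x\vee\cn)$. Together with $\Box x\le\Box(x\vee\cn)$ this yields $\Box x\vee\cn\le\Box(x\vee\cn)$. For the reverse inequality, take $z=\cn$ in \eqref{ax:Con'}, where $z\wedge\sim z=\cn$:
\[
\Box(x\vee\cn)=\Box x\vee\bigl(\Box(x\vee\cn)\wedge\sim\Box(x\vee\cn)\bigr)\le\Box x\vee\cn,
\]
because $w\wedge\sim w\le\cn$ for every $w$. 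This gives \eqref{centered}.

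For the converse, assume \eqref{centered}. To get \eqref{ax:Pos}, note that $z\vee\sim z\ge\cn$, so $z\vee\sim z=(z\vee\sim z)\vee\cn$. Hence $\Box(z\vee\sim z)=\Box(z\vee\sim z)\vee\cn\ge\cn$, which makes $\Box(z\vee\sim z)$ positive.

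For \eqref{ax:Con}, put $n=z\wedge\sim z\le\cn$ and $w=\Box(x\vee n)$. By isotonicity, $w\le\Box(x\vee\cn)=\Box x\vee\cn$. Applying $\sim$ to the same inequality gives $\sim w\ge\sim\Box(x\vee\cn)=\sim\Box x\wedge\cn$. Using distributivity and $\Box x\vee\sim\Box x\ge\cn$,
\[
\Box x\vee\sim w\ \ge\ \Box x\vee(\sim\Box x\wedge\cn)=(\Box x\vee\sim\Box x)\wedge(\Box x\vee\cn)\ \ge\ \cn\wedge(\Box x\vee\cn)=\cn,
\]
so $\Box x\vee\sim w\ge\Box x\vee\cn\ge w$, which is exactly \eqref{ax:Con}.

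The main obstacle is this last step. The naive approach of bounding $w\wedge\cn$ by $\sim w$ fails already in the three-element chain $0<\cn<1$ (take $w=1$: $w\wedge\cn=\cn\not\le 0=\sim w$). The way around it is to bound $\sim w$ from below through $\sim\Box(x\vee\cn)$, which is what the display above does. Everything else reduces to monotonicity of $\Box$ and the characterization of positive and negative elements via $\cn$.
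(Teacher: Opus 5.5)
Your proof is correct and follows essentially the paper's argument. In the converse direction, your bound $\sim w\ge\sim\Box x\wedge\cn$ followed by distributivity is just the De Morgan dual of the paper's estimate $\sim\Box x\wedge w\le\cn$. In the forward direction you differ only in small details: you read $\Box(x\vee\cn)\le\Box x\vee\cn$ off \eqref{ax:Con'} at $z=\cn$ without using \eqref{ax:Pos}, and you apply \eqref{ax:Pos} at $z=\cn$, whereas the paper applies \eqref{ax:Pos} to the positive element $x\vee\cn$.
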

 \begin{proof}
    Assume first that Conditions \eqref{ax:Pos} and \eqref{ax:Con} hold. Then, $\Box(x\vee \cn)\leq \Box x\vee \sim \Box(x\vee \cn)$. Since, $\cn \leq x\vee \cn$, the element $x\vee \cn$ is positive. Hence, by Condition \eqref{ax:Pos}, $\Box(x\vee \cn)$ is also a positive element. Therefore, $\sim \Box(x\vee \cn)\leq \cn$, and consequently $\Box(x\vee \cn)\leq \Box x\vee \cn$. The converse inequality follows, again, from $\cn\leq \Box(x\vee \cn)$ and by Condition \eqref{mk}, $\Box x\leq \Box(x\vee \cn)$.
 
     Conversely, assume that~\eqref{centered} holds. Let $z\in A$. Then, $\cn\leq z\vee\sim z$ and by \eqref{centered}, it follows that $\Box(z\vee\sim z)=\Box(z\vee\sim z)\vee \cn$. Hence, $\cn\leq \Box(z\vee\sim z)$. Thus, Condition~\eqref{ax:Pos} holds.

     Now let $z\in A$. Since $z\wedge\sim z\leq \cn$. By Conditions \eqref{mk} and \eqref{centered}, $\Box(x\vee (z\wedge \sim z))\leq \Box(x\vee \cn)=\Box x\vee \cn$. To prove \eqref{ax:Con}, it remains to show that $\Box x\vee \cn\leq \Box x\vee \sim \Box(x\vee(z\wedge\sim z))$. Indeed,
    \begin{align*}
       \sim\Box x \wedge  \Box(x\vee(z\wedge\sim z)) 
        &\le  \sim \Box x\wedge (\Box x\vee \cn)\\
        &=(\Box x\wedge\sim \Box x)\vee(\cn\wedge\sim \Box x)\\
        &=\cn\wedge\sim \Box x\\
        &\le \cn.
    \end{align*}
    Thus, $\cn=\sim \cn\leq \sim ( \sim\Box x\wedge \Box(x\vee(z\wedge\sim z)))=\Box x \vee \sim \Box(x\vee(z\wedge\sim z))$ and the result follows.
 \end{proof}

A natural additional property is that the modal operator $\Box$ preserves the center.

\begin{definition}
A centered modal Kleene algebra $\A$ is said to have a
\emph{center-preserving modal operator} if
\begin{equation}
\label{KC}\tag{KC}
\Box\cn=\cn.
\end{equation}
We denote by $\mathbf{cmKl}_0$ the full subcategory of $\mathbf{cmKl}$ whose objects are centered modal Kleene algebras satisfying \eqref{KC}.
\end{definition}

\begin{definition}
A \emph{$0$-preserving modal contact lattice} is a modal contact lattice
$\langle\D,\con\rangle$ satisfying
\begin{equation}
\label{D}\tag{D}
\Box0=0.
\end{equation}
We denote by $\mathbf{mCL}_0$ the full subcategory of $\mathbf{mCL}$ whose objects are $0$-preserving modal contact lattices.
\end{definition}

The following proposition establishes the expected correspondence.

\begin{proposition}
\label{pro:mCLc-mKLc}
Let $\mathcal{L}=\langle\D,\con\rangle$ be a $0$-preserving modal contact lattice. Then the twist structure $K(\mathcal{L})$
is a centered modal Kleene algebra satisfying \eqref{KC}.
\end{proposition}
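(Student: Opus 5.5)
Most of the work is already done by the equivalence theorem for $\mathbf{mCL}$ and $\mathbf{cmKl}$. Since $\mathcal{L}=\langle\D,\con\rangle$ is in particular a modal contact lattice, $\mathrm{K}(\mathcal{L})$ is a centered modal Kleene algebra. Indeed, $\mathrm{K}(\mathcal{L})$ is the modal Kleene algebra $\mathbf{K}(\mathcal{T})$ of Theorem~\ref{thm:twist-modal-kleene}, with $\mathcal{T}=\langle\D,\con,D\rangle$, expanded by the constant $\cbf=(0,0)$. Here $\mathcal{T}$ is a modal Kleene triple because conditions \eqref{cond:existence} and \eqref{cond:filter} hold trivially when $F=D$.

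For \eqref{centered}, the one thing to check is that $(0,0)$ is the center of $\mathrm{K}(\mathcal{L})$. First, $(0,0)\in K_{\con}(D)$ because $0\notcon 0$ by \eqref{C0}. Second, $\sim(0,0)=(0,0)$. Since a center is unique when it exists, $\cbf$ is the center.

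It then remains to verify \eqref{KC}, namely $\boxn\cbf=\cbf$. By the definition of $\boxn$ in Theorem~\ref{thm:twist-modal-kleene},
\[
\boxn(0,0)=(\Box 0,\Diamond 0).
\]
The first coordinate is $\Box 0=0$ by the hypothesis \eqref{D}. The second is $\Diamond 0=0$ by \eqref{ml3}, which every modal lattice satisfies. Hence $\boxn\cbf=(0,0)=\cbf$.

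No step here is a genuine obstacle. The only point needing care is that \eqref{D} is required for the first coordinate, whereas the second coordinate is automatic from \eqref{ml3}. This explains why $0$-preservation of $\Box$ is exactly the right lattice-side counterpart of \eqref{KC}.
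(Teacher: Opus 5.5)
Your proof is correct and is essentially the paper's: both note that $\mathrm{K}(\mathcal{L})$ is the twist structure of $\langle\D,\con,D\rangle$ with center $\cbf=(0,0)$, then compute $\boxn(0,0)=(\Box 0,\Diamond 0)=(0,0)$ using \eqref{D}, where you also make explicit the use of \eqref{ml3} for the second coordinate. One small labeling slip: when you write ``For \eqref{centered}'' you mean the property of having a center, not the identity $\Box(x\vee\cn)=\Box x\vee\cn$ that this label denotes in the paper.
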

\begin{proof}
Recall that $K(\mathcal{L})$ is a centered modal Kleene algebra with center
$\cbf=(0,0)$.  A straightforward computation shows that 
\[
\boxn\cbf
=\boxn(0,0)
=(\Box0,\Diamond0)
=(0,0)
=\cbf,
\]
where the third equality follows from \eqref{D}.  Thus,
$K(\mathcal{L})$ satisfies Condition~\eqref{KC}.
\end{proof}

\begin{proposition}
Let $\A$ be a centered modal Kleene algebra satisfying \eqref{KC}. Then
$\mathrm{L}(\A)$ is a $0$-preserving modal contact lattice.
\end{proposition}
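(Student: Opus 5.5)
Since $\A$ is a centered modal Kleene algebra, it is in particular a modal Kleene algebra. By the equivalence between $\mathbf{mCL}$ and $\mathbf{cmKl}$, $\mathrm{L}(\A)=\langle \A/\A^{-},\con_\A\rangle$ is already a modal contact lattice. So the only thing left is Condition~\eqref{D}, i.e.\ $\Box|0|=|0|$ in $\A/\A^{-}$. By definition of the operator on the quotient, $\Box|0|=|\Box 0|$. It therefore suffices to show $|\Box 0|=|0|$, which amounts to showing that $\Box 0\in A^{-}$.

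The key observation is that the class $|0|$ is exactly $A^{-}$. Indeed, $|x|=|0|$ iff there is $n\in A^{-}$ with $x\vee n=0\vee n=n$, i.e.\ $x\le n$ for some $n\in A^{-}$. Since $A^{-}$ is an ideal, this holds iff $x\in A^{-}$.

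To see $\Box 0\in A^{-}$, I would use \eqref{KC} together with monotonicity of $\Box$, which follows from \eqref{mk}. Since $0\le \cn$, we get $\Box 0\le \Box\cn=\cn$. Because $\cn=\sim\cn$, we have $\cn\in A^{-}$, and $A^{-}$ is a down-set (ideal), so $\Box 0\in A^{-}$. Alternatively, directly: $\Box0\le\cn=\sim\cn\le\sim\Box0$. Hence $|\Box 0|=|0|$, giving $\Box|0|=|0|$.

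There is no serious obstacle here. The only point requiring care is to note explicitly that $\mathrm{L}(\A)$ is a modal contact lattice (from Theorem~\ref{thm:cociente-modal}, forgetting the filter, or from the equivalence theorem), and that the class of $0$ in $\A/\A^{-}$ coincides with $A^{-}$.
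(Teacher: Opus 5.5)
Your proof is correct and essentially the paper's argument. Both rest on \eqref{KC} together with $\cn\in A^{-}$, i.e.\ $|\cn|=|0|$. The paper computes $\Box|0|=\Box|\cn|=|\Box\cn|=|\cn|=|0|$ using well-definedness of $\Box$ on the quotient, while you use monotonicity to get $\Box 0\le\cn$ and then that $A^{-}=|0|$ is a down-set — an equally short variant.
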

\begin{proof}
Since $\A$ satisfies \eqref{KC}, we have $\Box \cn=\cn$. It is easy to see that $|\cn|=|0|$. Then, $\Box |0|=\Box |\cn|=|\Box \cn|=|\cn|=|0|$.
\end{proof}

Therefore, the categorical equivalence between $\mathbf{cmKl}$ and
$\mathbf{mCL}$ restricts to the following equivalence.

\begin{corollary}
\label{cor:equiv-mKLc-mCLc}
The full subcategories $\mathbf{cmKl}_0$ of $\mathbf{cmKl}$ and
$\mathbf{mCL}_0$ of $\mathbf{mCL}$ are equivalent.
\end{corollary}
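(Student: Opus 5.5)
The plan is to restrict the equivalence $\mathrm{K}\colon\mathbf{mCL}\to\mathbf{cmKl}$, $\mathrm{L}\colon\mathbf{cmKl}\to\mathbf{mCL}$ from the preceding theorem. Since $\mathbf{cmKl}_0$ and $\mathbf{mCL}_0$ are \emph{full} subcategories, an equivalence restricts to the full subcategories provided each functor maps objects of one subcategory into the other. The natural isomorphisms $\beta$ and $\alpha$ of Theorems~\ref{thm:beta-iso} and~\ref{thm:alpha-iso} then have components that are morphisms between objects of the subcategories, and fullness makes them isomorphisms there. Naturality is inherited from the ambient categories.

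The object-level conditions are exactly the two preceding propositions. Proposition~\ref{pro:mCLc-mKLc} shows that $\mathrm{K}$ sends $\mathbf{mCL}_0$ into $\mathbf{cmKl}_0$. The subsequent proposition shows that $\mathrm{L}$ sends $\mathbf{cmKl}_0$ into $\mathbf{mCL}_0$. On morphisms nothing new has to be checked: a morphism in $\mathbf{mCL}_0$ is simply a morphism in $\mathbf{mCL}$ between $0$-preserving objects, and $\mathrm{K}$ of it is a morphism in $\mathbf{cmKl}$ between objects of $\mathbf{cmKl}_0$, hence a morphism in $\mathbf{cmKl}_0$. The same argument applies to $\mathrm{L}$.

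Finally, for $\A\in\mathbf{cmKl}_0$ the unit component $\beta_\A\colon\A\to\mathrm{K}(\mathrm{L}(\A))$ is an isomorphism in $\mathbf{cmKl}$. Both its ends lie in $\mathbf{cmKl}_0$, the target by the two propositions applied in turn, so it is an isomorphism in $\mathbf{cmKl}_0$. The same holds for $\alpha_{\mathcal L}$ with $\mathcal L\in\mathbf{mCL}_0$.

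There is no real obstacle here. The only point needing care is confirming that the objects $\mathrm{K}(\mathrm{L}(\A))$ and $\mathrm{L}(\mathrm{K}(\mathcal L))$ stay in the subcategories, which is just composing the two propositions. Alternatively, one could note directly that isomorphisms preserve the equations \eqref{KC} and \eqref{D}.
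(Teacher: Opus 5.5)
Your proposal is correct and follows the paper's argument. The paper likewise obtains the corollary by restricting the equivalence between $\mathbf{mCL}$ and $\mathbf{cmKl}$ via Proposition~\ref{pro:mCLc-mKLc} and the proposition following it; you simply spell out the routine categorical details the paper leaves implicit, namely fullness and the fact that the components of $\beta$ and $\alpha$ remain inside the subcategories.
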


\section{Twist-Product Representations of Some Varieties of Kleene Algebras with Implication}\label{sec:implicative}

Let $\mathbf{H}=\langle H,\wedge,\vee,\to,0,1\rangle$ be a Heyting algebra. Defining
\begin{align*}
    \Box_a b &:= a \to b,\\
    \Diamond_a b &:= a \wedge b,
\end{align*}
yields a lattice equipped with a family of modal operators which, for each $a \in H$, satisfies
conditions \eqref{ml1}, \eqref{ml2} and \eqref{ml3}. Indeed, a Heyting algebra can be equivalently characterized as a bounded distributive lattice in which, for every $a\in H$, the map $\Diamond_a$ admits a right adjoint $\Box_a$. The pair $(\Diamond_a,\Box_a)$ forms a monotone Galois connection for every $a\in H$. 

When applying the twist construction to $\mathbf{H}$, the crucial step is to equip this structure with an implication. A natural choice is to define

\[(a,b)\Rightarrow(c,d):= (a\to c,a\wedge d)=(\Box_ac,\Diamond_ad).\]

With this definition, the resulting twist structure is a Nelson algebra \cite{sendlewski1990nelson}.

This observation suggests that the duality
developed in Section~\ref{sec:modal} can be applied to algebras with implication, provided we interpret
the implication as a family of modal operators indexed by the elements of the algebra.

In this section we show that this is indeed possible.
\begin{definition}
A \emph{Right-Meet-Distributive Implication Lattice} (or RMDI-lattice, for short) is an algebra $\D=\langle D, \wedge, \vee,\to 0, 1\rangle$ of type $(2, 2, 2, 0, 0)$ such that $\langle D, \wedge, \vee, 0, 1\rangle$ is a bounded distributive lattice and the implication operation $\to$ satisfies the following identity:
\begin{align*}
    (a \to b) \wedge (a \to d) &= a \to (b \wedge d). \tag{I1}\label{I1}\\
\end{align*}
\end{definition}

From a category-theoretic and algebraic perspective, this identity states that for every fixed element $a \in D$, the mapping $\Box_a: D \to D$ defined by $\Box_ax := a \to x$ is a lower semilattice homomorphim (or meet-homomorphism) from the underlying semilattice $\langle D, \wedge\rangle$ into itself.

It is crucial to note that RMDI-lattices form a strictly larger variety than that of Distributive Lattice Implication algebras (DLI-algebras) or subresiduated lattices. Because the axiom places no constraints on the first argument of the implication, the variety includes pathological structures where the left-hand behavior is completely decoupled from the lattice operations. For instance, the identity does not force the boundary conditions $0 \to a = 1$ or $a \to 1 = 1$, nor does it impose any interaction with the join operation ($\vee$) from the left. Thus, RMDI-lattices provide the minimal algebraic framework for weak implication systems where conditional context preserves information intersection.
 
These algebras, together with some of their best-known subvarieties, are organized in the diagram presented in Figure~\ref{fig:varieties}. 

\begin{figure}[ht]
\centering
\begin{tikzpicture}
\node[](RMDI){RMDI};
\node[below of= RMDI](DLI){$\mathrm{DLI}$};
\node[below of= DLI] (DLIp) {$\mathrm{DLI}^{+}$};
\node[ below left of =DLIp] (DLIp1) {$\mathrm{DLI}^{+}_{1}$};
\node[ below right of= DLIp] (DLIneg) {$\mathrm{DLI}^{+}_{\neg}$};
\node[ below of= DLIp1] (SRL) {$\mathrm{SRL}$};
\node[below right of= SRL] (Heyting) {$\mathrm{Heyting}$};

\draw (RMDI)--(DLI);
\draw (DLI)--(DLIp);
\draw (DLIp) -- (DLIp1);
\draw (DLIp1) -- (SRL);
\draw (DLIp) -- (DLIneg);
\draw (SRL) -- (Heyting);
\draw (DLIneg) -- (Heyting);
\end{tikzpicture}
\caption{Algebraic subvarieties.}
\label{fig:varieties}
\end{figure}

The subvarieties represented in the diagram are characterized by the imposition of the following
additional conditions:

\begin{align}
(a \to d) \wedge (b \to d) &= (a \lor b) \to d,
    \tag{I2}\label{eq:I2}\\
0 \to a &= 1,
    \tag{I3}\label{eq:I3}\\
a \to 1 &= 1,
    \tag{I4}\label{eq:I4}\\
a \wedge (a \to b) &\le b,
    \tag{I5}\label{eq:I5}\\
a \to a &= 1,
    \tag{I6}\label{eq:I6}\\
(a \to b) \wedge (b \to d) &\le a \to d,
    \tag{I7}\label{eq:I7}\\
a \to b &\le d \to (a \to b),
    \tag{I8}\label{eq:I8}\\
b &\le a \to b.
    \tag{I9}\label{eq:I9}
\end{align}
In addition, we consider the following quasi-identity and its equivalent equation under Condition~\eqref{eq:I5}:
\begin{align}
a \wedge b = 0 &\Longrightarrow a \le b \to 0,
    \tag{A}\label{eq:A}\\
a \wedge ((a \wedge b)\to 0) &= a \wedge (b \to 0).
    \tag{A'}\label{eq:Aprime}
\end{align}

Using the notation above, the subvarieties are characterized as follows:

\begin{description}
\item[\textbf{DLI}:] RMDI-lattices satisfying
Conditions~\eqref{eq:I2}--\eqref{eq:I4}.

\item[\textbf{DLI$^{+}$}:] DLI-algebras satisfying
Condition~\eqref{eq:I5}.

\item[\textbf{DLI$^{+}_{1}$}:] DLI$^{+}$-algebras satisfying
Condition~\eqref{eq:I6}.

\item[\textbf{DLI$^{+}_{\neg}$}:] DLI$^{+}$-algebras satisfying
Condition~\eqref{eq:A}. Equivalently, they satisfy
Condition~\eqref{eq:Aprime}.

\item[\textbf{SRL}:] DLI$^{+}_{1}$-algebras satisfying
Conditions~\eqref{eq:I7} and~\eqref{eq:I8}.

\item[\textbf{Heyting}:] SRL-algebras satisfying
Condition~\eqref{eq:I9}.
\end{description}

Now we show how a RMDI-lattice equipped with a contact relation and a Boolean filter gives rise to a family of modal Kleene triples, and hence to a family of modal Kleene algebras via the twist construction.

\begin{definition}
Let $\mathcal{T}=\langle \D, \con, F\rangle$ be a triple where $\D$ is an RMDI-lattice, $\con$ is a contact relation, and $F$ is a Boolean filter. For each $d \in D$, we define the unary modal operators $\Box_d, \Diamond_d: D \to D$ as follows:
\begin{align*}
    \Box_d a &:= d \to a, \\
    \Diamond_d a &:= d \wedge a.
\end{align*}
We say that $\mathcal{T}$ is a \emph{multimodal Kleene triple} if for every $d \in D$, the triple $\mathcal{T}_d=\langle\langle D,\vee,\wedge, \Box_d, \Diamond_d,0,1\rangle, \con, F\rangle$ constitutes a modal Kleene triple; explicitly, this requires that the following conditions hold for all $a, b\in D$:
\begin{align}
\forall a \in D,\ \exists b \in D \text{ such that }& a\notcon b \text{ and } a \vee b \in F;\\
    \big(a\notcon b \text{ and } a \vee b \in F\big) &\implies \Box_d a \vee \Diamond_d b \in F, \label{cond:filter2} \\
    a\notcon b &\implies \Box_d a \notcon \Diamond_d b. \label{ax:modal-contact2}
\end{align}
\end{definition}

We now introduce the corresponding algebraic notion on the side of Kleene algebras. The definition below
is obtained by translating the axioms of a $\mathrm{RMDI}$-lattice into conditions on a family of modal
operators, taking into account the additional requirements needed for the quotient by negative elements to behave properly.

\begin{definition}
A \textbf{Kleene algebra with implication} is an algebra $\A=\langle A,\wedge,\vee,\sim,\Rightarrow,0,1\rangle$ such that $\langle A,\wedge,\vee,\sim,0,1\rangle$ is a Kleene algebra and $\Rightarrow$ is a binary operation satisfying: 
\begin{align}
x\Rightarrow(y\wedge z)
&=(x\Rightarrow y)\wedge(x\Rightarrow z),
\tag{KI1}\label{KI1}\\
\sim(x\Rightarrow(z\vee\sim z))
&\le x\Rightarrow(z\vee\sim z),
\tag{BoxPos}\label{BoxPos}\\
x\Rightarrow\bigl(y\vee(z\wedge\sim z)\bigr)
&\le (x\Rightarrow y)\vee
\sim\!\left(x\Rightarrow\bigl(y\vee(z\wedge\sim z)\bigr)\right),
\tag{BoxCon}\label{BoxCon}\\
x\Rightarrow y
&=(x\vee(z\wedge\sim z))\Rightarrow y,
\tag{wDef}\label{wDef}\\
(x\Rightarrow\sim y)\vee(x\wedge y)
&=\sim(x\Rightarrow\sim y)\vee(x\Rightarrow\sim y),
\tag{D1}\label{D1}\\
(x\Rightarrow\sim y)\wedge(x\wedge y)
&=(x\wedge y)\wedge\sim(x\wedge y).
\tag{D2}\label{D2}
\end{align}
    
\end{definition}

We now reformulate the previous definition in modal terms. Instead of a binary implication, we consider a family of unary modal operators indexed by the underlying algebra. Both presentations are equivalent via the correspondence

\[
\Box_x y:=x\Rightarrow y.
\]

Under this identification, Conditions \eqref{KI1}--\eqref{D2} become
\[
\begin{aligned}
\eqref{KI1}\qquad
&\Box_x(y \wedge w)
= \Box_x y \wedge \Box_x w,\\[1mm]
\eqref{BoxPos}\qquad
&\sim \Box_x (z \vee \sim z)
\leq \Box_x (z \vee \sim z),\\[1mm]
\eqref{BoxCon}\qquad
&\Box_x \bigl(y \vee (z \wedge \sim z)\bigr)
\leq \Box_x y \vee
\sim \Box_x \bigl(y \vee (z \wedge \sim z)\bigr),\\[1mm]
\eqref{wDef}\qquad
&\Box_x y
=\Box_{x\vee(z\wedge\sim z)}y,\\[1mm]
\eqref{D1}\qquad
&\Box_x(\sim y)\vee (x\wedge y)
=\sim \Box_x(\sim y)\vee \Box_x(\sim y),\\[1mm]
\eqref{D2}\qquad
&\Box_x(\sim y)\wedge (x\wedge y)
=(x\wedge y)\wedge \sim (x\wedge y).
\end{aligned}
\]

Conditions \eqref{KI1}, \eqref{BoxPos} and \eqref{BoxCon} are exactly \eqref{mk}, \eqref{ax:Pos} and \eqref{ax:Con} from
Definition~\ref{def:modal-kleene}, adapted to a family of operators. Conditions \eqref{wDef}, $\eqref{D1}$ and $\eqref{D2}$ will
ensure that the quotient $\A/\A^{-}$ inherits a well-defined operations. As usual, we define the dual
operators by $\Diamond_x y := \sim \Box_x \sim y$.

\begin{theorem}
\label{thm:quotient-DLI3}
Let $\A$ be a Kleene algebra with an implication. Then the structure
\[
    \big\langle \langle A/A^{-},\wedge,\vee,\to,|0|,|1|\rangle,\; \con_{\mathbf{A}},\; A^{+}/A^{-} \big\rangle,
\]
where the implication is defined by $|x| \to |y| := |x\Rightarrow y|$, is a multimodal Kleene triple.
\end{theorem}
\begin{proof}
First, we show that the implication is well defined. Fix $x\in A$ and consider the modal operator $\Box_x$. Then, $|x| \to |y|=|\Box_x y|$. By Condition~\eqref{BoxCon}, the induced map on $A/A^{-}$ is well defined with respect to the second argument. It remains to verify that the definition is independent of the choice of the first representative. Suppose that $|x|=|x'|$. Then there exists $n\in A^{-}$ such that $x \vee n = x' \vee n$. Since $n = n \wedge \sim n$, by \eqref{wDef} we have $\Box_x y = \Box_{x \vee n}y=\Box_{x' \vee n}y=\Box_{x'}y$. Thus $|\Box_x y| = |\Box_{x'} y|$, so the definition is independent of the representatives. Moreover, it is straightforward to verify that, by Condition~\eqref{KI1}, $\langle A/A^{-},\wedge,\vee,\to,|0|,|1|\rangle$ is a RMDI-lattice.

Define $\Box_{|x|}|y|:=|\Box_xy|$ and $\Diamond_{|x|}|y|:=|\Diamond_xy|$. It follows immediately from Conditions~\eqref{KI1}--\eqref{BoxCon} that, for each $x\in A$, the structure $\langle A,\wedge,\vee,\sim,\Box_x,0,1\rangle$ is a modal Kleene algebra. By Theorem~\ref{thm:cociente-modal}, we obtain that for each $x\in A$, $$\langle \langle A/A^{-},\wedge,\vee,\Box_{|x|},\Diamond_{|x|},|0|,|1|\rangle, \con_{\mathbf{A}},A^+/A^{-}\rangle$$ is a modal Kleene triple.

Now, we will check that $|\Diamond_xy|=|x|\wedge|y|$. Let us consider $n=((x\wedge y)\wedge \sim (x\wedge y))\vee ((\Diamond_xy\wedge\sim \Diamond_xy))$. $n$ is a negative element because it is the join of two negative elements. Moreover,
\[
\begin{aligned}
\Diamond_xy\vee n
&=\Diamond_xy
  \vee((x\wedge y)\wedge\sim(x\wedge y))\\
&=\Diamond_xy
  \vee(\Box_x(\sim y)\wedge(x\wedge y))
  &&\text{by \eqref{D2}}\\
&=(\Diamond_xy\vee\sim\Diamond_xy)
  \wedge(\Diamond_xy\vee(x\wedge y)).
\end{aligned}
\]

On the other hand,
\[
\begin{aligned}
(x\wedge y)\vee n
&=(x\wedge y)\vee(\Diamond_xy\wedge\sim\Diamond_xy)\\
&=((x\wedge y)\vee\Diamond_xy)
  \wedge((x\wedge y)\vee\sim\Diamond_xy)\\
&=((x\wedge y)\vee\Diamond_xy)
  \wedge(\Diamond_xy\vee\sim\Diamond_xy) &&\text{by \eqref{D1}.}\\
\end{aligned}
\]

It follows that $ \Diamond_xy\vee n=(x\wedge y)\vee n$ and therefore $|\Diamond_x y|=|x\wedge y|=|x|\wedge|y|$.

\end{proof}

Conversely, starting from a multimodal Kleene triple, the twist construction yields a Kleene algebra with an implication.

\begin{theorem}
\label{thm:twist-modal-family}
Let $\mathcal{T}=\langle \D,\con,F\rangle$ be a multimodal Kleene triple. Then \[
\mathbf{K}(\mathcal{T})=\langle\KC{D},\cap,\cup,\sim,\Rightarrow,\zero,\one\rangle
\]
where $(d,e)\Rightarrow (a,b) := (\Box_d a, \Diamond_d b)=(d\rightarrow a,d\wedge b)$, is a Kleene algebra with implication.
\end{theorem}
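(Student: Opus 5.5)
We must show that $\mathbf{K}(\mathcal{T})$ is a Kleene algebra, that $\Rightarrow$ is well defined on $\KC{D}$, and that it satisfies \eqref{KI1}--\eqref{D2}. The Kleene algebra part is Proposition~\ref{prop:twist-kleene}. The plan is to reduce as much as possible to Theorem~\ref{thm:twist-modal-kleene}, applied to each modal Kleene triple $\mathcal{T}_d$, and then check the three implication-specific axioms \eqref{wDef}, \eqref{D1}, \eqref{D2} by direct computation in $D\times D$.

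\textbf{Well-definedness and \eqref{KI1}--\eqref{BoxCon}.} Fix $(d,e)\in\KC{D}$. The map $(a,b)\mapsto(d,e)\Rightarrow(a,b)$ is exactly the operator $\boxn$ of $\mathbf{K}(\mathcal{T}_d)$. Since $\mathcal{T}_d$ is a modal Kleene triple, Theorem~\ref{thm:twist-modal-kleene} gives three things at once:
\begin{itemize}
\item $(\Box_d a,\Diamond_d b)\in\KC{D}$, by \eqref{ax:modal-contact2} and \eqref{cond:filter2};
\item the unary operator satisfies \eqref{mk}, \eqref{ax:Pos} and \eqref{ax:Con}, which are \eqref{KI1}, \eqref{BoxPos} and \eqref{BoxCon} for the fixed first argument $(d,e)$.
\end{itemize}
One should check that $\Box_d$ satisfies \eqref{ml1}, which holds by (I1). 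One should also check that $\Diamond_d=d\wedge(-)$ satisfies \eqref{ml2} and \eqref{ml3}, which holds by distributivity. So $\langle D,\Box_d,\Diamond_d\rangle$ is a modal lattice, as required in the definition of a multimodal Kleene triple.

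\textbf{\eqref{wDef}.} Negative elements of $\KC{D}$ have the form $(0,f)$: $(z\wedge\sim z)=(a\wedge b, a\vee b)$ and $a\wedge b=0$. Hence $(d,e)\cup(0,f)=(d,e\wedge f)$ has the same first coordinate. The implication depends only on the first coordinate of its left argument, so \eqref{wDef} holds at once.

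\textbf{\eqref{D1} and \eqref{D2}.} Let $x=(d,e)$ and $y=(a,b)$. Then $\sim y=(b,a)$, so $x\Rightarrow\sim y=(d\to b,\,d\wedge a)$ and $x\wedge y=(d\wedge a,\,e\vee b)$. For \eqref{D1}:
\begin{itemize}
\item the left side is $((d\to b)\vee(d\wedge a),\,(d\wedge a)\wedge(e\vee b))$;
\item the right side is $((d\to b)\vee(d\wedge a),\,(d\wedge a)\wedge(d\to b))$.
\end{itemize}
The first coordinates agree. For the second, note that $d\wedge a\wedge e=0$ and $a\wedge b=0$ (both pairs lie in $\KC{D}$), so the left second coordinate is $0$. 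Also $(d\to b)\not\con(d\wedge a)$ by \eqref{ax:modal-contact2} applied to $b\notcon a$. By Remark~\ref{rem:overlap} this forces $(d\to b)\wedge d\wedge a=0$, so both second coordinates are $0$.

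For \eqref{D2}:
\begin{itemize}
\item the left side is $((d\to b)\wedge d\wedge a,\ (d\wedge a)\vee e\vee b)$, whose first coordinate is $0$ by the same fact;
\item the right side is $(0,\,(d\wedge a)\vee e\vee b)$.
\end{itemize}
These coincide.

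The main obstacle is the \eqref{D1}/\eqref{D2} verification. It hinges on deriving $(d\to b)\wedge d\wedge a=0$ from the contact condition \eqref{ax:modal-contact2} via the overlap minimality in Remark~\ref{rem:overlap}, together with careful bookkeeping of the twist coordinates. Everything else reduces to Theorem~\ref{thm:twist-modal-kleene} applied pointwise in $d$.
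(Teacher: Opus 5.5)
Your proposal is correct and follows the paper's proof. You obtain well-definedness and \eqref{KI1}--\eqref{BoxCon} by applying Theorem~\ref{thm:twist-modal-kleene} to each $\mathcal{T}_d$, and then verify \eqref{wDef}, \eqref{D1} and \eqref{D2} coordinatewise, using that the elements $z\wedge\sim z$ have the form $(0,f)$ and that $(d\to b)\wedge d\wedge a=0$ by \eqref{ax:modal-contact2} and Remark~\ref{rem:overlap}. The only differences are that you write out \eqref{D2}, which the paper merely calls analogous, and that you make explicit the fact $\Box_d b\wedge\Diamond_d a=0$, which the paper uses without comment.
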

\begin{proof}
Since for each $d\in D$ the triple $\mathcal{T}_d$ is a modal Kleene triple, $\mathbf{K}(\mathcal{T})$ satisfies \eqref{KI1}, \eqref{BoxPos} and \eqref{BoxCon}. 

It remains to
verify \eqref{wDef}, \eqref{D1} and \eqref{D2} for the family.

For \eqref{wDef}, take $(a,b), (d,e) \in \KC{D}$ and $(0,n)\in \KC{D}^-$. Then $(d,e)\Rightarrow (a,b) = (\Box_d a,\Diamond_d b)=(d,e\wedge n)\Rightarrow (a,b)=((d,e)\cup (0,n))\Rightarrow (a,b)$ and the result follows.

For \eqref{D1}, let $(a,b),(d,e)\in \KC{D}$. Then,
\begin{align*}
((d,e)\Rightarrow\sim (a,b))\cup((d,e)\cap (a,b))
    &=(\Box_db,\Diamond_d a)\cup (d\wedge a,e\vee b)\\
    &=(\Box_db \vee (d\wedge a),\Diamond_d a\wedge(e\vee b))\\
    &=(\Box_db \vee (d\wedge a),0).    
\end{align*}
On the other hand, 
\begin{align*}
    \sim ((d,e)\Rightarrow \sim (a,b))\cup  ((d,e)\Rightarrow \sim (a,b))
    &=\sim (\Box_db,\Diamond_d a)\cup (\Box_db,\Diamond_d a)\\
    &=(\Diamond_d a,\Box_db)\cup (\Box_db,\Diamond_d a)\\
    &=(\Box_db \vee \Diamond_d a,0)\\
    &=(\Box_db \vee (d\wedge a),0). 
\end{align*}

Therefore, \eqref{D1} holds. The proof of \eqref{D2} is analogous.
\end{proof}

The previous constructions naturally give rise to a categorical equivalence. To state it, we first introduce the relevant categories.

The category of multimodal Kleene triples, denoted by $\mathbf{KIt}$, has multimodal Kleene triples as objects. Its morphisms are homomorphisms of RMDI-lattices that are also morphisms of Kleene triples.

The category of Kleene algebras with implication, denoted by $\mathbf{KI}$, has Kleene algebras with implication as objects and homomorphisms preserving all the operations as morphisms.

\begin{theorem}
The categories $\mathbf{KI}$ and $\mathbf{KIt}$ are categorically equivalent.
\end{theorem}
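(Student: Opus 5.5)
We follow the pattern of Theorem~\ref{teorema equivalencia}. On objects we use Theorem~\ref{thm:quotient-DLI3} and Theorem~\ref{thm:twist-modal-family}; on morphisms we extend the functors $\mathrm{T}$ and $\mathrm{K}$; the unit and counit are the maps $\beta_\A$ and $\alpha_\T$.

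\textbf{Functors.} For $\mathrm{T}\colon\mathbf{KI}\to\mathbf{KIt}$, take a homomorphism $f\colon\A\to\A'$ of Kleene algebras with implication and set $\mathrm{T}(f)(|x|)=|f(x)|$. Proposition~\ref{prop:Tfunctor-mor} shows this is a morphism of Kleene triples. It preserves $\to$ because
\[
\mathrm{T}(f)(|x|\to|y|)=|f(x\Rightarrow y)|=|f(x)\Rightarrow f(y)|=\mathrm{T}(f)(|x|)\to \mathrm{T}(f)(|y|).
\]
Hence it is an RMDI-homomorphism. For $\mathrm{K}\colon\mathbf{KIt}\to\mathbf{KI}$, take a morphism $g$ and set $\mathrm{K}(g)(a,b)=(g(a),g(b))$. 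Proposition~\ref{prop:Kfunctor-mor} gives the Kleene part. For the implication,
\[
\mathrm{K}(g)\bigl((d,e)\Rightarrow(a,b)\bigr)=(g(d\to a),g(d\wedge b))=(g(d)\to g(a),g(d)\wedge g(b)).
\]
This uses only that $g$ preserves $\to$ and $\wedge$. Functoriality of both assignments is inherited from the non-modal case.

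\textbf{Unit.} For $\A\in\mathbf{KI}$, Jalali's result says $\beta_\A(x)=(|x|,|\sim x|)$ is a Kleene isomorphism $\A\to\mathbf{K}(\mathrm{T}(\A))$. It remains to check that it preserves $\Rightarrow$, i.e.
\[
\beta_\A(x\Rightarrow y)=(|x\Rightarrow y|,|\sim(x\Rightarrow y)|),
\qquad
\beta_\A(x)\Rightarrow\beta_\A(y)=(|x|\to|y|,\;|x|\wedge|\sim y|).
\]
The first coordinates agree by the definition of $\to$ on the quotient. For the second, write $\sim(x\Rightarrow y)=\sim\Box_x\sim\sim y=\Diamond_x(\sim y)$. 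The last step of the proof of Theorem~\ref{thm:quotient-DLI3}, which uses \eqref{D1} and \eqref{D2}, gives $|\Diamond_x z|=|x|\wedge|z|$. Applying it with $z=\sim y$ gives the equality. Naturality of $\beta$ is the same computation as in the non-modal case.

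\textbf{Counit.} For $\T\in\mathbf{KIt}$, Jalali's result says $\alpha_\T(|(a,b)|)=a$ is a lattice isomorphism $\mathrm{T}(\mathbf{K}(\T))\to\T$ that respects contact and carries the filter onto $F$. We check it preserves the implication:
\[
\alpha_\T\bigl(|(d,e)|\to|(a,b)|\bigr)=\alpha_\T\bigl(|(d\to a,d\wedge b)|\bigr)=d\to a.
\]
Hence $\alpha_\T$ is an isomorphism in $\mathbf{KIt}$. Together with the previous two steps this gives the equivalence.

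The main obstacle is the second coordinate in the unit step. It requires the identification $|\Diamond_x z|=|x\wedge z|$, which holds only because of \eqref{D1} and \eqref{D2}. We will verify that the computation in Theorem~\ref{thm:quotient-DLI3} really holds for arbitrary $y$, so that it applies to $z=\sim y$. We also need to confirm that $\beta_\A$ lands in the twist structure with the implication $(d,e)\Rightarrow(a,b)=(d\to a,d\wedge b)$ rather than in a $\Diamond$-based one.
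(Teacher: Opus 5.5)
Your proposal is correct and follows the paper's route. The paper says only that the functors and the natural isomorphisms $\beta_\A$, $\alpha_\T$ of Theorem~\ref{teorema equivalencia} extend directly to the family of operators; you spell this out, and your worry about the unit resolves positively. The computation at the end of Theorem~\ref{thm:quotient-DLI3} uses \eqref{D1} and \eqref{D2} for arbitrary $x,y$, so $|\sim(x\Rightarrow y)|=|\Diamond_x(\sim y)|=|x|\wedge|\sim y|$, which is exactly the second coordinate of $\beta_\A(x)\Rightarrow\beta_\A(y)$ in $\mathbf{K}(\mathrm{T}(\A))$.
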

\begin{proof}
    The construction of the functors and the verification of the natural isomorphisms extend those of Theorem~\ref{teorema equivalencia} in a straightforward way by considering the family of modal operators instead of a single modal operator.
\end{proof}

The categorical equivalence established above extends naturally to subvarieties determined by additional equations. More precisely, every subvariety of RDMIs axiomatized by additional equations corresponds, under the translation given in Table~\ref{tab:rdmi-kleene}, to the subvariety of Kleene algebras with implication axiomatized by the corresponding equations.

\begin{table}[ht]
\centering
\small
\renewcommand{\arraystretch}{1.5}
\begin{tabular}{|c|l|}
\hline
RMDI& Kleene algebra with implication \\
\hline
    \eqref{eq:I2} &
$(x\Rightarrow z)\wedge(y\Rightarrow z)=(x\vee y)\Rightarrow z$
\\
\hline
\eqref{eq:I3} &
$0\Rightarrow x=1$
\\
\hline
\eqref{eq:I4} &
$x\Rightarrow1=1$
\\
\hline
\eqref{eq:I5} &
$x\wedge(x\Rightarrow y)\le x\wedge(\sim x\vee y)$
\\
\hline
\eqref{eq:I6} &
$x\Rightarrow x=1$
\\
\hline
\eqref{eq:I7} &
$(x\Rightarrow y)\wedge(y\Rightarrow z)
\le
(x\Rightarrow z)\vee(y\wedge\sim y)$
\\
\hline
\eqref{eq:I8} &
$(x\Rightarrow y)\le z\Rightarrow(x\Rightarrow y)$
\\
\hline
\eqref{eq:I9} &
$ y\le x\Rightarrow y$
\\
\hline
\eqref{eq:Aprime} &
$x\wedge((x\wedge y)\Rightarrow 0)= x\wedge(( y\Rightarrow 0)\vee \sim x)$
\\
\hline
\end{tabular}
\caption{Translations of the additional axioms}
\label{tab:rdmi-kleene}
\end{table}

To facilitate the comparison with previous work, we first prove that, in certain varieties of Kleene algebras with implication, the congruence relation is definable in terms of the implication. The proof follows the general strategy of the proof of the corresponding result in \cite[p.~154]{viglizzo99algebras}; we provide the details here for the sake of completeness.

\begin{proposition}\label{congruenciaImplicacion}
Let $\A=\langle A,\wedge,\vee,\sim,\Rightarrow,0,1\rangle$ be a Kleene algebra with implication satisfying Equations \eqref{KI1}--\eqref{D2} and
\begin{align} 
(x\Rightarrow z)\wedge(y\Rightarrow z) &=(x\vee y)\Rightarrow z,\label{eq:meet}\tag{KI2}\\ 
x\wedge(x\Rightarrow y) &\le x\wedge(\sim x\vee y),\label{eq:compat}\tag{KI5}\\ 
x\Rightarrow x &=1.\label{eq:refl} \tag{KI6}
\end{align}
Then, for every $x,y\in A$,
\[
|x|=|y|
\quad\Longleftrightarrow\quad
x\Rightarrow y=1
\ \text{and}\
y\Rightarrow x=1.
\]
\end{proposition}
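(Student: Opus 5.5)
The plan is to prove the two implications separately. The forward direction uses \eqref{wDef} together with \eqref{eq:refl}; the converse uses \eqref{eq:compat} to write each element as a join of a negative element and $x\wedge y$. I do not expect \eqref{eq:meet} to be needed.

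For ($\Rightarrow$), assume $|x|=|y|$. By \eqref{eq:nega-quotient} there is $n\in A^{-}$ with $x\vee n=y\vee n$. Since $n\le\sim n$, we have $n=n\wedge\sim n$, so $n$ has the form $z\wedge\sim z$ required in \eqref{wDef} (take $z=n$). Applying \eqref{wDef} to the first argument, then the hypothesis, then \eqref{wDef} again, and finally \eqref{eq:refl}, gives
\[
x\Rightarrow y=(x\vee n)\Rightarrow y=(y\vee n)\Rightarrow y=y\Rightarrow y=1 .
\]
Exchanging the roles of $x$ and $y$ gives $y\Rightarrow x=1$.

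For ($\Leftarrow$), assume $x\Rightarrow y=1$ and $y\Rightarrow x=1$. By \eqref{eq:compat},
\[
x=x\wedge 1=x\wedge(x\Rightarrow y)\le x\wedge(\sim x\vee y),
\]
so $x=x\wedge(\sim x\vee y)$. Distributivity then gives $x=(x\wedge\sim x)\vee(x\wedge y)$. The same argument applied to $y\Rightarrow x=1$ gives $y=(y\wedge\sim y)\vee(x\wedge y)$.

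Now set $n:=(x\wedge\sim x)\vee(y\wedge\sim y)$. This is negative because $A^{-}$ is an ideal. Substituting the two decompositions,
\[
x\vee n=(x\wedge y)\vee n=y\vee n,
\]
and hence $|x|=|y|$ by \eqref{eq:nega-quotient}.

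The only delicate point is the forward direction: one must notice that an arbitrary negative element is literally of the form $z\wedge\sim z$, so that \eqref{wDef} applies. This holds because $n\le\sim n$ forces $n=n\wedge\sim n$. After that, everything is a routine lattice computation.
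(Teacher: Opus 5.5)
Your proof is correct, and it takes a different route from the paper's in both directions.

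For the forward direction, the paper works in the second argument with $x$ fixed. It first shows $x\Rightarrow(y\vee n)=x\Rightarrow(x\vee n)=1$ using \eqref{eq:refl} and \eqref{KI1}. It then applies \eqref{BoxCon} to remove $n$, which gives $1\le (x\Rightarrow y)\vee\sim 1$. You instead work in the first argument and absorb $n$ with \eqref{wDef}, which takes one line. Both arguments rely on the same observation that $n=n\wedge\sim n$.

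For the converse, the paper first uses \eqref{eq:meet} to combine the two hypotheses into $(x\vee y)\Rightarrow x=1$. It applies \eqref{eq:compat} once, to $x\vee y$, and obtains the symmetric witness $n=(x\vee y)\wedge\sim(x\vee y)$ with $x\vee n=x\vee y=y\vee n$. You apply \eqref{eq:compat} separately to $x$ and to $y$ and write each as $x\wedge y$ joined with a negative element. Your witness is $(x\wedge\sim x)\vee(y\wedge\sim y)$, which needs $A^{-}$ to be closed under joins.

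Your route is more economical in hypotheses: it uses only \eqref{wDef}, \eqref{eq:compat} and \eqref{eq:refl}. So \eqref{eq:meet}, the extra hypothesis stated in the proposition, is in fact unnecessary, and the characterization holds in a somewhat larger class. The paper's route shows that the forward direction needs only the second-argument conditions \eqref{KI1} and \eqref{BoxCon}, not \eqref{wDef}. Its converse also exhibits $|x|=|x\vee y|=|y|$ through a single witness determined by $x\vee y$.
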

\begin{proof}
Suppose first that $|x|=|y|$. Then there exists $n\in A^{-}$ such that $x\vee n=y\vee n$. By Equation~\eqref{BoxCon},
\[
x\Rightarrow(y\vee n)
\leq
(x\Rightarrow y)\vee
\sim\bigl(x\Rightarrow(y\vee n)\bigr).
\]
Moreover,
\begin{align*}
1
&=x\Rightarrow x
=x\Rightarrow\bigl(x\wedge(x\vee n)\bigr)\\
&=(x\Rightarrow x)\wedge(x\Rightarrow(x\vee n))\\
&=x\Rightarrow(x\vee n)
=x\Rightarrow(y\vee n).
\end{align*}
Hence, $1\leq (x\Rightarrow y)\vee0$,
and therefore $x\Rightarrow y=1$. By symmetry, $y\Rightarrow x=1$.

Conversely, suppose that $x\Rightarrow y=1$ and $y\Rightarrow x=1$. By Equation~\eqref{eq:meet},
\[
(x\vee y)\Rightarrow x
=(x\Rightarrow x)\wedge(y\Rightarrow x)
=1.
\]
Using Equation~\eqref{eq:compat},
\[
(x\vee y)\wedge((x\vee y)\Rightarrow x)
\leq
((x\vee y)\wedge\sim(x\vee y))\vee x,
\]
and therefore
\[
x\vee y
\leq
((x\vee y)\wedge\sim(x\vee y))\vee x
\leq
x\vee y.
\]
Hence, $x\vee y=((x\vee y)\wedge\sim(x\vee y))\vee x$. Setting $n=(x\vee y)\wedge\sim(x\vee y)$, we obtain $x\vee n=x\vee y$. By symmetry, $y\vee n=x\vee y$, and consequently $x\vee n=y\vee n$. Therefore, $|x|=|y|$.
\end{proof}

The variety of Kleene algebras with implication satisfying Equations~\eqref{KI1}--\eqref{D2}, together with those corresponding to (I2)--(I8) in Table~\ref{tab:rdmi-kleene}, coincides with the variety of subresiduated Nelson algebras studied in~\cite{noemi2025}. Hence, the categorical equivalence established above yields, as an immediate consequence, a categorical equivalence between the category of modal Kleene triples $\langle\D,\con,F\rangle$, where $\D$ is a subresiduated residuated lattice (SRL), and the category of subresiduated Nelson algebras.

Finally, Propositions~\ref{congruenciaImplicacion} and \ref{prop:DLIneg-forces-overlap} show that in the subvariety $\mathrm{DLI}_{\neg}^+$—and in particular for Heyting algebras—the contact relation of a multimodal Kleene triple is forced to be the overlap. Consequently, for such algebras the duality developed here reduces to the well-known representation of Nelson algebras via Heyting algebras together with the overlap relation. 

\begin{proposition}
\label{prop:DLIneg-forces-overlap}
Let $\D$ be a $\mathrm{DLI}_{\neg}^+$-algebra, and let $\langle\D,\con,F\rangle$ be a multimodal Kleene triple. Then $\con =\Overl$, i.e., $\con$ is the overlap contact relation.
\end{proposition}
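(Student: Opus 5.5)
The inclusion $\Overl\subseteq\con$ always holds by Remark~\ref{rem:overlap}, since the overlap is the smallest contact relation. So it is enough to prove the converse: whenever $a\wedge b=0$ in $\D$, we have $a\notcon b$. The plan is to obtain this non-contact from the modal contact condition~\eqref{ax:modal-contact2} for a well-chosen index $d$, applied to a trivially non-contacting pair. The quasi-identity~\eqref{eq:A} of $\mathrm{DLI}^+_{\neg}$ then transfers the conclusion to the pair $a,b$.

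Concretely, fix $a,b\in D$ with $a\wedge b=0$, and take the index $d:=b$. By~\eqref{C0}, $0\notcon 1$. Since $\langle\D,\con,F\rangle$ is a multimodal Kleene triple, the triple $\mathcal{T}_b$ is a modal Kleene triple, so~\eqref{ax:modal-contact2} applies. It gives
\[
\Box_b 0\notcon \Diamond_b 1,\qquad\text{that is,}\qquad (b\to 0)\notcon (b\wedge 1)=b .
\]
Next I use the $\mathrm{DLI}^+_{\neg}$ axiom~\eqref{eq:A}: from $a\wedge b=0$ it follows that $a\le b\to 0$.

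It remains to pass non-contact down along $\le$ in the first argument. Suppose towards a contradiction that $a\con b$. By~\eqref{C2} this gives $b\con a$. Since $a\le b\to 0$, \eqref{C3} gives $b\con(b\to 0)$, and applying~\eqref{C2} again gives $(b\to 0)\con b$. This contradicts the displayed non-contact. Hence $a\notcon b$, and therefore $\con=\Overl$.

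I do not expect a serious obstacle; the only real step is choosing the index $d=b$ and the pair $(0,1)$. With that choice, $\Box_d$ produces the pseudo-complement-like element $b\to 0$ and $\Diamond_d$ returns $b$ itself. Only~\eqref{ax:modal-contact2} and~\eqref{eq:A} are used. In particular, neither~\eqref{cond:filter2} nor the filter $F$ plays any role, and from $\mathrm{DLI}^+$ only the quasi-identity~\eqref{eq:A} is needed.
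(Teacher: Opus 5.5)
Your argument is correct, and it takes a shorter route than the paper's.

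The paper assumes $a\con b$ and uses \eqref{cond:existence} to pick $a'$ with $a\notcon a'$ and $a\vee a'\in F$. It applies \eqref{ax:modal-contact2} with index $a$ to get $a=\Diamond_a a\notcon\Box_a a'$. It then uses \eqref{I1} and $a\wedge a'=0$ to show $\neg a=a\to(a\wedge a')\le\Box_a a'$. Finally, \eqref{eq:A} gives $b\le\neg a\le\Box_a a'$, and upward closure of $\con$ yields the contradiction $\Diamond_a a\con\Box_a a'$.

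You instead feed the trivially non-contacting pair $0\notcon 1$ into \eqref{ax:modal-contact2} at index $b$. This gives $(b\to 0)\notcon b$ at once, so you need neither the existence condition, nor the filter $F$, nor the monotonicity of $\Box_d$ coming from \eqref{I1}. In fact the paper's proof never uses $a\vee a'\in F$. Choosing $a'=0$ there, which satisfies $a\notcon a'$ by \eqref{C0} and \eqref{C2}, collapses it to your argument with $a$ and $b$ exchanged.

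Your version therefore proves a slightly stronger fact: on any RMDI-lattice satisfying \eqref{eq:A}, every contact relation satisfying \eqref{ax:modal-contact2} for all indices $d$ is the overlap, with no Boolean filter involved.
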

\begin{proof}
Assume that $a\con b$. By \eqref{cond:existence}, there exists $a' \in D$ such that
$a\notcon a'$ and $a \vee a' \in F$. In particular, $a \wedge a' = 0$ and by \eqref{ax:modal-contact2}, $\Diamond_a a \notcon \Box_a a'$. Since $D$ is a $\mathrm{DLI}_{\neg}^+$-algebra, we have
\begin{align*}
    \neg a := a \to 0 = a \to (a \wedge a') \overset{\eqref{I1}}{=} (a \to a) \wedge (a \to a') \leq \Box_a a'.
\end{align*}
Now suppose that $a \wedge b = 0$. By condition \eqref{eq:A}, $b \leq \neg a \leq \Box_a a'$. Since $a\con b$, it follows that $a\con \Box_a a'$, and consequently, $\Diamond_a a \con \Box_a a'$, a contradiction. Hence $a\wedge b\neq 0$, and therefore $\con = \Overl$.
\end{proof}

\section{Topological Duality}
\label{sec:topological-duality}

In this section, we extend the topological duality for Kleene triples developed by Jalali~\cite{jalali} to modal Kleene algebras. The section is organized as follows: first, we recall Jalali's duality for Kleene triples; in Section~\ref{subsec:topological-duality-modal-lattices}, we extend Priestley duality to modal lattices with $\Diamond$ and $\Box$ operators; finally, in Section~\ref{subsec:topological-duality-modal-kleene}, we bring these components together to establish the dual equivalence for modal Kleene triples and, consequently, for modal Kleene algebras.

We begin by recalling that a \emph{Priestley space} is a compact ordered topological space $\X = \langle X, \tau, \leq \rangle$ such that for any $x,y \in X$ with $x \not\leq y$, there exists a clopen up-set $V \subseteq X$ containing $x$ but not $y$. Classical Priestley duality establishes a dual equivalence between the category of bounded distributive lattices with homomorphisms and the category of Priestley spaces with continuous order-preserving maps (Priestley morphisms). The remainder of this preliminary discussion, up to Theorem~\ref{thm:Kt-duality}, reproduces the topological duality for Kleene triples established by Jalali~\cite{jalali}; we state the relevant definitions, propositions, and theorems without proof, solely for later reference, and refer the interested reader to~\cite{jalali, viglizzo99algebras} for full details.

\begin{definition}
\label{def:dual-kleene-triple}
A \emph{dual Kleene triple} is a structure $\mathcal{X}=\langle \X, H, \Q \rangle$, where $\X$ is a Priestley space, $H\subseteq X$ is a closed up-set consisting of maximal elements, and $\Q$ is a binary relation on $X$, satisfying:
\begin{itemize}
    \item $\Q$ is reflexive;
    \item $\Q$ is symmetric;
    \item $\Q$ is a closed down-set of $X^2$ with respect to the componentwise order;
    \item $\Q$ is compatible with $H$: if $x\in H$ and $x\Q y$, then $y\le x$.
\end{itemize}

Given dual Kleene triples $\mathcal{X}_1=\langle \X_1,H_1,\Q_1\rangle$ and $\mathcal{X}_2=\langle \X_2,H_2,\Q_2\rangle$, a function $f \colon \X_1 \to \X_2$ is a \emph{dual Kleene morphism} if it is a Priestley morphism satisfying:
\begin{itemize}
    \item if $x \Q_1 y$, then $f(x) \Q_2 f(y)$;
    \item $f[H_1]\subseteq H_2$.
\end{itemize}

The category of dual Kleene triples and dual Kleene morphisms is denoted by $\mathbf{Kt}^{*}$.
\end{definition}

Given a dual Kleene triple $\mathcal{X}=\langle \X, H, \Q \rangle$, we define a binary relation $\con_{\Q}$ on the set of increasing clopen subsets of $\X$, denoted $\U(\X)$, by
\[
    V \con_{\Q} W \iff \exists (x,y) \in V \times W \ \text{such that} \ x\Q y.
\]
It can be shown that $\con_{\Q}$ is a contact relation, which leads to the following result.

\begin{proposition}
\label{prop:dual-to-triple}
Let $\mathcal{X} = \langle \X, H, \Q \rangle$ be a dual Kleene triple. Then $\mathrm{U}(\mathcal{X}) = \langle \U(\X), \con_{\Q}, \mathcal{F}(H) \rangle$ is a Kleene triple, where $\mathcal{F}(H)$ is the filter of all clopen up-sets containing $H$.
\end{proposition}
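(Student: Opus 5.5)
We have to check three things: $\U(\X)$ is a bounded distributive lattice, $\con_{\Q}$ is a contact relation, and $\mathcal{F}(H)$ is a Boolean filter with the Kleene triple existence condition. Priestley duality already gives the lattice $\U(\X)$ of clopen up-sets with $\cap,\cup,\emptyset,X$. We then treat the remaining two parts in turn.

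\emph{Contact axioms.} \eqref{C0} is immediate, since $\emptyset$ contains no point. For \eqref{C1}, a nonempty $V$ contains some $x$, and reflexivity gives $x\Q x$. \eqref{C2} follows from symmetry of $\Q$. \eqref{C3} is monotonicity: if $W\subseteq W'$, a witness pair for $W$ also witnesses $W'$. For \eqref{C4}, a witness $(x,y)$ with $y\in W_1\cup W_2$ lies in $V\times W_1$ or in $V\times W_2$. Reflexivity and symmetry are exactly what is used here.

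\emph{The filter.} $\mathcal{F}(H)$ is clearly a filter. To show it is Boolean, we prove that for each $V\in\U(\X)$ there is $W$ with $V\cap W=\emptyset$ modulo $\mathcal{F}(H)$ and $V\cup W\in\mathcal{F}(H)$. We obtain this from the stronger existence condition: for each $V$ we construct $W\in\U(\X)$ with $V\notcon_{\Q} W$ and $H\subseteq V\cup W$. Indeed, $V\notcon_{\Q}W$ forces $V\cap W=\emptyset$ by reflexivity, so $|W|$ is a complement of $|V|$ in $\U(\X)/\mathcal{F}(H)$. (Equivalently, $\U(\X)/\mathcal{F}(H)$ is the lattice of clopen up-sets of the closed subspace $H$. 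Since $H$ consists of maximal points and is an up-set, its order is trivial, so this lattice is the Boolean algebra of clopens of the Stone space $H$.)

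\emph{Main step: building $W$.} Let $\Q[V]=\{y:\exists x\in V,\ x\Q y\}$. Because $\Q$ is closed and $V$ is closed in a compact space, $\Q[V]$ is closed, being a projection of the closed set $\Q\cap(V\times X)$. Because $\Q$ is a down-set of $X^2$, $\Q[V]$ is a down-set. Its complement $U=X\setminus\Q[V]$ is therefore an open up-set, and any $W\subseteq U$ satisfies $V\notcon_{\Q}W$. Next we show $H\setminus V\subseteq U$. Suppose $h\in H\setminus V$ and $x\Q h$ with $x\in V$. By symmetry $h\Q x$, and compatibility with $H$ gives $x\le h$. Since $V$ is an up-set, $h\in V$, a contradiction. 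Now $H\setminus V$ is compact, as it is the intersection of a closed set with the clopen complement of $V$. In a Priestley space, every open up-set is a union of clopen up-sets. Covering $H\setminus V$ by finitely many clopen up-sets inside $U$ and taking their union gives $W\in\U(\X)$ with $H\setminus V\subseteq W\subseteq U$. Hence $H\subseteq V\cup W$, that is, $V\cup W\in\mathcal{F}(H)$.

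This compactness and separation step, together with the use of compatibility to place $H\setminus V$ inside $U$, is the only nontrivial point; everything else is bookkeeping.
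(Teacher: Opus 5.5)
Your proof is correct, and there is no in-paper proof to compare it with. The paper gives no argument for this proposition: it is one of the results from Jalali's duality that the authors restate without proof, referring to \cite{jalali, viglizzo99algebras}. Your write-up therefore works as a complete, self-contained verification.

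Each step holds up:
\begin{itemize}
\item The contact axioms \eqref{C0}--\eqref{C4} need only reflexivity and symmetry of $\Q$.
\item $\Q[V]$ is closed, since it is the continuous image of the compact set $\Q\cap(V\times X)$ in a Hausdorff space.
\item $\Q[V]$ is a down-set, because $\Q$ is down-closed in the second coordinate.
\item Symmetry together with compatibility keeps $H\setminus V$ outside $\Q[V]$.
\item Compactness of $H\setminus V$, plus the standard fact that open up-sets of a Priestley space are unions of clopen up-sets, produces the required $W$.
\end{itemize}

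Two small observations are worth adding.
\begin{itemize}
\item You get Booleanness of $\mathcal{F}(H)$ as a consequence of the existence condition, via $V\notcon_{\Q}W\Rightarrow V\cap W=\emptyset$. So your main argument never uses the hypothesis that $H$ consists of maximal elements; it appears only in your parenthetical alternative.
\item That hypothesis is in fact redundant in Definition~\ref{def:dual-kleene-triple}. If $h\in H$ and $h\le z$, then $(h,z)\le(z,z)$ componentwise and $z\Q z$, so $h\Q z$ because $\Q$ is a down-set. Compatibility then forces $z\le h$, so $h$ is maximal.
\end{itemize}
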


\begin{proposition}
\label{prop:Uf-Kt-morphism}
If $f \colon \mathcal{X}_1 \to \mathcal{X}_2$ is a $\mathbf{Kt}^{*}$-morphism, then the map
\[
\mathrm{U}(f) \colon \mathrm{U}(\mathcal{X}_2) \to \mathrm{U}(\mathcal{X}_1), \qquad V \mapsto f^{-1}[V]
\]
is a $\mathbf{Kt}$-morphism.
\end{proposition}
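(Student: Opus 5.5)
To prove that $\mathrm{U}(f)$ is a $\mathbf{Kt}$-morphism, I would check the three defining requirements one at a time: that $\mathrm{U}(f)$ is a bounded lattice homomorphism, that it sends the Boolean filter into the Boolean filter, and that it reflects the contact relation.

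\emph{Lattice part.} This is classical Priestley duality. Since $f$ is continuous and order-preserving, $f^{-1}[V]$ is a clopen up-set of $\X_1$ whenever $V\in\U(\X_2)$. Taking preimages commutes with unions and intersections, and it sends $\emptyset$ to $\emptyset$ and $X_2$ to $X_1$. Hence $\mathrm{U}(f)\colon\U(\X_2)\to\U(\X_1)$ is a bounded lattice homomorphism.

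\emph{Filters.} Let $V\in\mathcal{F}(H_2)$, so $H_2\subseteq V$. From $f[H_1]\subseteq H_2\subseteq V$ we get $H_1\subseteq f^{-1}[V]$. Therefore $f^{-1}[V]\in\mathcal{F}(H_1)$, which shows $\mathrm{U}(f)[\mathcal{F}(H_2)]\subseteq\mathcal{F}(H_1)$.

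\emph{Contact.} We must show that $f^{-1}[V]\con_{\Q_1} f^{-1}[W]$ implies $V\con_{\Q_2} W$. Suppose $f^{-1}[V]\con_{\Q_1} f^{-1}[W]$. Then there are $x\in f^{-1}[V]$ and $y\in f^{-1}[W]$ with $x\Q_1 y$. Since $f$ preserves $\Q$, we have $f(x)\Q_2 f(y)$, and by construction $f(x)\in V$ and $f(y)\in W$. So the pair $(f(x),f(y))\in V\times W$ witnesses $V\con_{\Q_2}W$.

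None of these steps is a real obstacle; each follows directly from the definitions. The only thing to watch is the direction of the morphism: $\mathrm{U}$ is contravariant, so "preserving $\Q$" on the dual side is exactly what yields "reflecting contact" on the algebraic side. Functoriality ($\mathrm{U}(g\circ f)=\mathrm{U}(f)\circ\mathrm{U}(g)$ and preservation of identities) is immediate from the corresponding properties of preimages, if it is needed.
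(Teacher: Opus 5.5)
Your proof is correct and complete. $\mathrm{U}(f)$ is a bounded lattice homomorphism by Priestley duality, and $f[H_1]\subseteq H_2$ gives $H_1\subseteq f^{-1}[V]$ whenever $H_2\subseteq V$. Preservation of $\Q$ by $f$ turns any witness $x\Q_1 y$ of $f^{-1}[V]\con_{\Q_1} f^{-1}[W]$ into the witness $f(x)\Q_2 f(y)$ of $V\con_{\Q_2}W$, which is exactly the reflection of contact required of a $\mathbf{Kt}$-morphism.

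The paper does not prove this proposition itself; it quotes it from Jalali's duality without proof. Your direct check of the three defining conditions is the standard argument, of the same kind as the paper's own verification in Proposition~\ref{prop:Uf-box-morphism} for $\Box$-spaces.
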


Consequently, the assignments
\[
\mathcal{X} \longmapsto \mathrm{U}(\mathcal{X}), \qquad f \longmapsto \mathrm{U}(f),
\]
define a contravariant functor $\mathrm{U} \colon \mathbf{Kt}^{*} \longrightarrow \mathbf{Kt}$.

We now turn to the inverse construction, associating a dual space with each algebra. Every bounded distributive lattice $\D$ gives rise to a Priestley space $\X(\D) = \langle \mathrm{X}(\D), \tau_{\D}, \subseteq \rangle$, where $\mathrm{X}(\D)$ is the set of prime filters of $\D$, ordered by inclusion, and $\tau_{\D}$ is the topology generated by the subbasis
\[
\{\sigma(a) : a \in D\} \cup \{\mathrm{X}(\D) \setminus \sigma(a) : a \in D\},
\]
with $\sigma(a) := \{P \in \mathrm{X}(\D) : a \in P\}$.

\begin{proposition}
\label{prop:triple-to-dual}
Let $\T$ be a Kleene triple. Then $\mathrm{X}(\T) = \langle \X(\D), \Q_{\con}, \mathcal{H}(F) \rangle$ is a dual Kleene triple, where
\begin{equation}
\label{eq:dual-contact}
     P\Q_{\con} P' \iff \forall a \in P \; \forall b \in P' \; (a \con b),\tag{DC}
\end{equation}
and $\mathcal{H}(F) = \{P \in X(\D) : F \subseteq P\}$.
\end{proposition}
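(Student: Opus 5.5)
We have to check the four defining conditions of a dual Kleene triple for $\langle \X(\D), \Q_{\con}, \mathcal{H}(F)\rangle$. Throughout we use the prime filter theorem for distributive lattices and the standard facts about Priestley spaces of prime filters.

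\emph{Reflexivity.} Take $P$ prime and $a,b\in P$. Then $a\wedge b\in P$, so $a\wedge b\neq 0$, which gives $a\Overl b$. By Remark~\ref{rem:overlap}, $a\con b$. Hence $P\Q_{\con}P$.

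\emph{Symmetry.} This is immediate from \eqref{C2}.

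\emph{Down-set.} Suppose $P\Q_{\con}P'$ with $Q\subseteq P$ and $Q'\subseteq P'$. Every pair $(a,b)\in Q\times Q'$ also lies in $P\times P'$, so $a\con b$ and $Q\Q_{\con}Q'$.

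\emph{Closedness.} Suppose $(P,P')\notin\Q_{\con}$. Then there are $a\in P$ and $b\in P'$ with $a\notcon b$. The set $\sigma(a)\times\sigma(b)$ is an open neighbourhood of $(P,P')$. It is disjoint from $\Q_{\con}$, because any $(Q,Q')$ in it has $a\in Q$ and $b\in Q'$ with $a\notcon b$.

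\emph{$\mathcal{H}(F)$ is a closed up-set.} It is clearly an up-set. It is closed because it equals $\bigcap_{a\in F}\sigma(a)$.

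\emph{Points of $\mathcal{H}(F)$ are maximal.} This is where the Boolean filter hypothesis enters. Let $P\in\mathcal{H}(F)$ and suppose $P\subsetneq Q$ with $Q$ a prime filter. Then $Q\supseteq F$ as well, and prime filters containing $F$ correspond to prime filters of $D/F$ via $\pi^{-1}$. If $F=D$, no prime filter contains $F$, so $\mathcal{H}(F)$ is empty and there is nothing to prove. Otherwise $D/F$ is Boolean, so its prime filters are maximal and pairwise incomparable. Hence $P=Q$.

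\emph{Compatibility with $H$.} This is the main step, and it is where the Kleene triple condition is used. Let $P\in\mathcal{H}(F)$ and $P\Q_{\con}P'$; we want $P'\subseteq P$. Take $b\in P'$. By the defining condition of a Kleene triple there is $c$ with $b\notcon c$ and $b\vee c\in F\subseteq P$. Since $P$ is prime, $b\in P$ or $c\in P$. Suppose $c\in P$. Then $P\Q_{\con}P'$ with $c\in P$ and $b\in P'$ gives $c\con b$, hence $b\con c$ by \eqref{C2}, a contradiction. So $b\in P$, and therefore $P'\subseteq P$.

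This completes the verification that $\mathrm{X}(\T)$ is a dual Kleene triple.
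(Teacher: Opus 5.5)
Your proof is correct and complete. The paper gives no argument for this proposition and simply quotes it from Jalali, deferring details to Viglizzo. Your axiom-by-axiom check is the standard verification: overlap $\le\con$ gives reflexivity, the open rectangles $\sigma(a)\times\sigma(b)$ give closedness of $\Q_{\con}$, and primeness of $P$ together with the existence condition gives compatibility with $\mathcal{H}(F)$.

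One small simplification: maximality of the points of $\mathcal{H}(F)$ does not need the Boolean-filter hypothesis. Suppose $P\in\mathcal{H}(F)$ and $P\subseteq Q$. For $a\in P$ and $b\in Q$ we have $0\neq a\wedge b\in Q$, so $P\,\Q_{\con}\,Q$. Your compatibility step then yields $Q\subseteq P$, hence $P=Q$. This reflects the fact that the existence condition already makes $D/F$ Boolean: $[a]$ and $[b]$ are complements in $D/F$ whenever $a\notcon b$ and $a\vee b\in F$.
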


\begin{proposition}
\label{prop:Xh-Kt-morphism}
If $h \colon \T_1 \to \T_2$ is a morphism of $\mathbf{Kt}$, then the map
\[
\mathrm{X}(h) \colon \mathrm{X}(\T_2) \to \mathrm{X}(\T_1), \qquad P \mapsto h^{-1}[P]
\]
is a morphism of $\mathbf{Kt}^{*}$.
\end{proposition}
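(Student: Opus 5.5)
We must show that $\mathrm{X}(h)\colon P\mapsto h^{-1}[P]$ is a $\mathbf{Kt}^{*}$-morphism from $\mathrm{X}(\T_2)$ to $\mathrm{X}(\T_1)$. Three things are needed: that it is a Priestley morphism, that it preserves $\Q$, and that it maps $\mathcal{H}(F_2)$ into $\mathcal{H}(F_1)$.

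The first part is classical Priestley duality. Since $h$ is a bounded lattice homomorphism, $h^{-1}[P]$ is a prime filter of $\D_1$ whenever $P$ is a prime filter of $\D_2$. The map is order preserving for inclusion. It is continuous because $\mathrm{X}(h)^{-1}[\sigma_1(a)]=\sigma_2(h(a))$, and the complements of the subbasic sets behave the same way. So this part can be quoted directly.

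For preservation of $\Q$, assume $P\,\Q_{\con_2}\,P'$. By \eqref{eq:dual-contact} this means $c\con_2 d$ for all $c\in P$ and $d\in P'$. Now take $a\in h^{-1}[P]$ and $b\in h^{-1}[P']$. Then $h(a)\in P$ and $h(b)\in P'$, so $h(a)\con_2 h(b)$. The morphism condition on $h$, namely that $h(a)\con_2 h(b)$ implies $a\con_1 b$, then gives $a\con_1 b$. Hence $h^{-1}[P]\,\Q_{\con_1}\,h^{-1}[P']$.

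For the filter condition, let $P\in\mathcal{H}(F_2)$, so $F_2\subseteq P$. If $a\in F_1$, then $h(a)\in h[F_1]\subseteq F_2\subseteq P$, which means $a\in h^{-1}[P]$. Thus $F_1\subseteq h^{-1}[P]$, that is, $h^{-1}[P]\in\mathcal{H}(F_1)$.

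None of these steps is a real obstacle. The only point that needs care is the contravariance in the contact condition: the implication required of $h$ runs backwards, and that is exactly what makes $\Q$ be preserved forwards under preimages.
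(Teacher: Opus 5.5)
Your proof is correct. The Priestley part is standard, and the two remaining checks are exactly what a $\mathbf{Kt}^{*}$-morphism requires: preservation of $\Q$ uses the contravariant contact condition on $h$, and $F_1\subseteq h^{-1}[P]$ follows from $h[F_1]\subseteq F_2$. The paper states this proposition without proof, deferring to Jalali and Viglizzo, so there is no in-paper argument to compare against; your direct verification is the standard one.
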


Consequently, the assignments
\[
\T \longmapsto \mathrm{X}(\T), \qquad h \longmapsto \mathrm{X}(h),
\]
define a contravariant functor $\mathrm{X} \colon \mathbf{Kt} \longrightarrow \mathbf{Kt}^{*}$.

The natural isomorphisms establishing the duality are given by the standard evaluation.

\begin{theorem}
\label{thm:epsilon-iso}
Let $\mathcal{X} = \langle \X, H, \Q \rangle$ be a dual Kleene triple. The map
\[
\epsilon \colon X \longrightarrow \mathrm{X}(\mathrm{U}(\mathcal{X})), \qquad x \mapsto \{V \in \U(\X) : x \in V\}
\]
is an isomorphism of dual Kleene triples. That is, $\epsilon$ is a homeomorphism, an order isomorphism, $\epsilon[H] = \mathcal{H}(\mathcal{F}(H))$, and for all $x,y \in X$, $ x\Q y$ if and only if $\epsilon(x) \Q_{\con_{\Q}}\epsilon(y)$.
\end{theorem}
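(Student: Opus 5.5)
By classical Priestley duality, $\epsilon$ is already a homeomorphism and an order isomorphism onto $\mathrm{X}(\U(\X))$. The plan is therefore to check only the two extra clauses: $\epsilon[H]=\mathcal{H}(\mathcal{F}(H))$, and that $\epsilon$ preserves and reflects $\Q$. In both I would use the Priestley separation property together with compactness to produce clopen up-sets or clopen down-sets that separate points from closed sets.

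\textbf{The filter clause.} For $x\in H$, every $V\in\mathcal{F}(H)$ contains $H$, hence contains $x$, so $\mathcal{F}(H)\subseteq\epsilon(x)$. For the converse, suppose $x\notin H$ and aim to find $V\in\mathcal{F}(H)$ with $x\notin V$.
\begin{itemize}
\item Since $H$ is an up-set consisting of maximal elements, no $h\in H$ satisfies $h\le x$. Otherwise $x\geq h$ would force $x\in H$.
\item So for each $h\in H$ we have $h\not\le x$, and Priestley separation gives a clopen up-set $V_h$ with $h\in V_h$ and $x\notin V_h$.
\item $H$ is closed, hence compact, so finitely many $V_h$ cover $H$. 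Their union $V$ is a clopen up-set containing $H$ and missing $x$.
\end{itemize}
Then $V\in\mathcal{F}(H)\setminus\epsilon(x)$, so $\epsilon(x)\notin\mathcal{H}(\mathcal{F}(H))$. Surjectivity of $\epsilon$ then yields the equality of sets.

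\textbf{Forward direction for $\Q$.} Suppose $x\Q y$. For any $V\ni x$ and $W\ni y$ in $\U(\X)$, the pair $(x,y)$ witnesses $V\con_\Q W$. Hence $\epsilon(x)\Q_{\con_\Q}\epsilon(y)$ by \eqref{eq:dual-contact}.

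\textbf{Converse for $\Q$ (the main obstacle).} Assume $(x,y)\notin\Q$. We need $V\ni x$ and $W\ni y$, both clopen up-sets, with $(V\times W)\cap\Q=\emptyset$.
\begin{itemize}
\item Consider the principal up-set ${\uparrow}(x,y)={\uparrow}x\times{\uparrow}y$ in $X^2$. Because $\Q$ is a down-set, ${\uparrow}(x,y)\cap\Q=\emptyset$: if $(x',y')\geq(x,y)$ lay in $\Q$, then $(x,y)\in\Q$.
\item Write ${\uparrow}x=\bigcap\{V\in\U(\X):x\in V\}$, which holds by Priestley separation, and similarly for $y$. Then ${\uparrow}x\times{\uparrow}y$ is the intersection of the family $V\times W$ with $V\ni x$, $W\ni y$.
\item This family is closed under finite intersections. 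Each $(V\times W)\cap\Q$ is closed in the compact space $X^2$, because $\Q$ is closed. The total intersection is empty, so by compactness some single $(V\times W)\cap\Q$ is already empty.
\end{itemize}
This gives $V\notcon_\Q W$ with $V\in\epsilon(x)$ and $W\in\epsilon(y)$, hence $\neg(\epsilon(x)\Q_{\con_\Q}\epsilon(y))$. The delicate point is exactly this combination: using that $\Q$ is closed and a down-set so that separation by basic products of clopen up-sets is possible.
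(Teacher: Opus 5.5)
Your proof is correct and complete. The separation-plus-compactness arguments for $\epsilon[H]=\mathcal{H}(\mathcal{F}(H))$ and for reflecting $\Q$ are sound, and they use only that $H$ is a closed up-set and that $\Q$ is a closed down-set of $X^2$; reflexivity, symmetry, maximality of the points of $H$, and the $H$-compatibility of $\Q$ are not needed for this statement.

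The paper itself gives no proof here. It quotes the result from Jalali without proof, so there is no argument of its own to compare against. Your structure still matches how the paper handles the analogous Theorem~\ref{thm:epsilon-box-morphism} for $\Box$-spaces: Priestley duality supplies the homeomorphism and order isomorphism, and then the extra structure is checked directly.
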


\begin{theorem}
\label{thm:sigma-iso}
Let $\T = \langle \D, \con, F \rangle$ be a Kleene triple. The map
\[\sigma \colon D \longrightarrow \mathrm{U}(\mathrm{X}(\T)), \qquad a \mapsto \{P \in X(\D) : a \in P\},\]
is an isomorphism of Kleene triples. That is, $\sigma$ is a lattice isomorphism, $\sigma[F] = \mathcal{F}(\mathcal{H}(F))$, and for all $a,b \in D$, $a \con b$ if and only if $\sigma(a) \con_{\Q_{\con}} \sigma(b)$.
\end{theorem}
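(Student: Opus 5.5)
Since Priestley duality already makes $\sigma$ a bounded lattice isomorphism from $\D$ onto $\U(\X(\D))$, the plan is to verify the two Kleene-triple conditions directly: $\sigma[F]=\mathcal{F}(\mathcal{H}(F))$, and $a\con b \iff \sigma(a)\con_{\Q_{\con}}\sigma(b)$.

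\textbf{The filter.} The inclusion $\sigma[F]\subseteq\mathcal{F}(\mathcal{H}(F))$ is immediate, because every $P\in\mathcal{H}(F)$ contains $F$.

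For the converse, take $a$ with $\mathcal{H}(F)\subseteq\sigma(a)$ and suppose $a\notin F$. By the prime filter theorem there is a prime filter $P$ with $F\subseteq P$ and $a\notin P$. This contradicts $P\in\mathcal{H}(F)\subseteq\sigma(a)$, so $a\in F$. Booleanness of $F$ is not needed for this step.

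\textbf{The contact relation, easy direction.} Suppose $\sigma(a)\con_{\Q_{\con}}\sigma(b)$. Then there are prime filters $P\ni a$ and $P'\ni b$ with $P\Q_{\con}P'$. By \eqref{eq:dual-contact}, this gives $a\con b$ at once.

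\textbf{The contact relation, hard direction.} Assume $a\con b$; we must produce prime filters $P\ni a$ and $P'\ni b$ with $x\con y$ for all $x\in P$, $y\in P'$. I would argue in two Zorn steps.

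First, consider the set of pairs of filters $(G,G')$ such that $a\in G$, $b\in G'$, and $x\con y$ for all $x\in G$, $y\in G'$. It contains $({\uparrow}a,{\uparrow}b)$ by (C3) together with symmetry (C2). It is closed under unions of chains, since contact only involves pairs of elements. Hence Zorn gives a maximal pair $(P,P')$. Note also that $0\notin P$ and $0\notin P'$ by (C0).

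Second, show $P$ is prime. Suppose $x\vee y\in P$ but $x,y\notin P$. By maximality, the filter generated by $P\cup\{x\}$ fails the condition. So there are $p\in P$ and $q\in P'$ with $(p\wedge x)\notcon q$. Similarly there are $p_1\in P$ and $q_1\in P'$ with $(p_1\wedge y)\notcon q_1$.

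Put $r=p\wedge p_1$ and $s=q\wedge q_1\in P'$. By monotonicity (C3) and symmetry, $(r\wedge x)\notcon s$ and $(r\wedge y)\notcon s$. Then (C4) yields
\[
((r\wedge x)\vee(r\wedge y))\notcon s, \quad\text{that is,}\quad (r\wedge(x\vee y))\notcon s.
\]
But $r\wedge(x\vee y)\in P$, a contradiction. Primeness of $P'$ follows symmetrically.

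Therefore $(P,P')$ witnesses $\sigma(a)\con_{\Q_{\con}}\sigma(b)$. The main care point is this prime-filter separation argument for the contact relation: one must check that maximality is taken over pairs, so that both filters can be enlarged independently.
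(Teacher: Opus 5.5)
Your proof is correct and is essentially the argument the paper relies on: the paper states this theorem without proof, deferring to \cite{jalali, viglizzo99algebras}, and its one nontrivial ingredient---turning ${\uparrow}a\times{\uparrow}b\subseteq\con$ into prime filters $P\ni a$, $P'\ni b$ with $P\times P'\subseteq\con$---is the prime-filter lemma for contact relations (Lemma~8 of \cite{duntsch2008distributive}), which the paper invokes later in the proof of Proposition~\ref{prop:triple-to-mdkt}. Your Zorn argument over pairs of filters, followed by the (C4)-based primeness check, is a correct self-contained proof of that lemma; note that it uses a single maximality argument, so ``two Zorn steps'' is a slight misnomer.
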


As a direct consequence, we obtain the main duality result of Jalali~\cite{jalali}.

\begin{theorem}
\label{thm:Kt-duality}
The functors $\mathrm{U} \colon \mathbf{Kt}^{*} \to \mathbf{Kt}$ and $\mathrm{X} \colon \mathbf{Kt} \to \mathbf{Kt}^{*}$ establish a dual equivalence between the categories $\mathbf{Kt}$ and $\mathbf{Kt}^{*}$.
\end{theorem}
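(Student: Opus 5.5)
The statement is Theorem~\ref{thm:Kt-duality}, which the paper takes from Jalali~\cite{jalali}. The plan is to assemble it from the preceding results in the standard way.

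\textbf{Functors.} By Propositions~\ref{prop:dual-to-triple} and~\ref{prop:Uf-Kt-morphism}, $\mathrm{U}$ is a contravariant functor $\mathbf{Kt}^{*}\to\mathbf{Kt}$. By Propositions~\ref{prop:triple-to-dual} and~\ref{prop:Xh-Kt-morphism}, $\mathrm{X}$ is a contravariant functor $\mathbf{Kt}\to\mathbf{Kt}^{*}$. Functoriality is inherited from Priestley duality, since identities and composites are sent to identities and composites via preimages.

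\textbf{Natural isomorphisms.} We need $\epsilon\colon \mathrm{Id}_{\mathbf{Kt}^{*}}\Rightarrow \mathrm{X}\mathrm{U}$ and $\sigma\colon \mathrm{Id}_{\mathbf{Kt}}\Rightarrow \mathrm{U}\mathrm{X}$.

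By Theorems~\ref{thm:epsilon-iso} and~\ref{thm:sigma-iso}, each component is a bijection that preserves and reflects all the structure. One then checks that each component is an isomorphism in the respective category, meaning its inverse is also a morphism:
\begin{itemize}
\item For $\sigma$, the inverse is a lattice isomorphism mapping $\mathcal{F}(\mathcal{H}(F))$ onto $F$. It reflects contact because $a\con b$ holds if and only if $\sigma(a)\con_{\Q_\con}\sigma(b)$.
\item For $\epsilon$, the inverse is a homeomorphism and an order isomorphism. It preserves $\Q$ by the biconditional, and it sends $\mathcal{H}(\mathcal{F}(H))$ onto $H$.
\end{itemize}

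Naturality is the usual Priestley computation. For a $\mathbf{Kt}$-morphism $h\colon D_1\to D_2$ and $a\in D_1$,
\[
\mathrm{U}\mathrm{X}(h)(\sigma_1(a))=\mathrm{X}(h)^{-1}[\sigma_1(a)]=\{P: a\in h^{-1}[P]\}=\sigma_2(h(a)).
\]
For $f\colon\mathcal{X}_1\to\mathcal{X}_2$ and $x\in X_1$,
\[
\mathrm{X}\mathrm{U}(f)(\epsilon_1(x))=\{V: x\in f^{-1}[V]\}=\epsilon_2(f(x)).
\]

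\textbf{Main obstacle.} These steps are routine once the cited results hold. The substantive content sits inside Theorems~\ref{thm:epsilon-iso} and~\ref{thm:sigma-iso}: showing that $a\notcon b$ forces $\sigma(a)\times\sigma(b)$ to avoid $\Q_\con$, and that the reflection of $\Q$ holds.

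If pressed on these, I would argue as follows.
\begin{itemize}
\item Take prime filters $P\ni a$ and $P'\ni b$ with $P\Q_\con P'$. Definition~\eqref{eq:dual-contact} then gives $a\con b$ directly.
\item Conversely, if $a\con b$, apply the prime filter theorem twice, using \eqref{C3} and \eqref{C4}. This extends $\mathord{\uparrow}a$ and $\mathord{\uparrow}b$ to prime filters that are pairwise in contact: first maximize a filter $P\ni a$ among those with $c\con b$ for all $c\in P$, then do the same for the second coordinate.
\end{itemize}
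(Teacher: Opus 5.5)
Your proposal is correct and follows the same route as the paper, which states Theorem~\ref{thm:Kt-duality} as a direct consequence of the functoriality results (Propositions~\ref{prop:dual-to-triple}--\ref{prop:Xh-Kt-morphism}) together with the isomorphisms $\epsilon$ and $\sigma$ of Theorems~\ref{thm:epsilon-iso} and~\ref{thm:sigma-iso}, leaving all details to Jalali. Your naturality computations and your Zorn/prime-filter sketch for the contact clause of $\sigma$ correctly fill in what the paper delegates to the citation; note that the sketch also uses \eqref{C0} (to keep the maximal filters proper) and \eqref{C2} (symmetry), not only \eqref{C3} and \eqref{C4}.
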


\subsection{Topological Duality for Modal Lattices}
\label{subsec:topological-duality-modal-lattices}

To extend Priestley duality to modal Kleene algebras, we represent modal operators via binary relations on Priestley spaces. The operator $\Diamond$ is normal, while $\Box$ is non-normal; such operators characterize the class of regular modal logics~\cite[p.~234]{Chellas1980}. Historically, these systems were introduced by Lemmon~\cite{Lemmon1957} to capture notions such as moral obligation or scientific necessity, where the rule of necessitation is omitted, thereby avoiding $\Box \top$ as a logical truth~\cite{Lemmon1957, palmigiano2016}. Kripke~\cite{Kripke1965} provided a relational semantics for non-normal systems using \emph{non-normal worlds} (or \emph{impossible worlds}).

In the topological setting, we adopt a unified approach. For the normal operator $\Diamond$, the existing duality for additive operators~\cite{cignoli1991remarks, petrovich1996} yields a binary relation on the Priestley space. For the non-normal operator $\Box$, we use a binary relation together with a distinguished subset $N$ of normal worlds, mirroring Kripke's impossible-worlds semantics. Combining these devices yields a full topological duality for modal lattices. Since the case of $\Diamond$ has already been fully treated in~\cite{cignoli1991remarks, petrovich1996}, we only recall the relevant definitions below and refer the reader to those papers for proofs; we then develop the dual theory of $\Box$-spaces in detail, as this is the main new component required for our purposes.

\begin{definition}
\label{def:diamond-space}
A \emph{$\Diamond$-space} is a structure $\langle \X, \G \rangle$, where $\X$ is a Priestley space and $\G$ is a binary relation on $X$, satisfying:
\begin{itemize}
    \item for every clopen up-set $V$, the set $\G^{-1}[V]=\{x\in X:\G(x)\cap V\neq\emptyset\}$ is a clopen up-set;
    \item if $x\notG y$, then there exists a clopen up-set $V$ such that $y\in V$ and $\G(x)\cap V=\emptyset$.
\end{itemize}

Given $\Diamond$-spaces $\langle \X_1, \G_1 \rangle$ and $\langle \X_2, \G_2 \rangle$, a function
$f \colon X_1 \to X_2$ is a \emph{$\Diamond$-morphism} if it is a Priestley morphism satisfying:
\begin{itemize}
    \item if $x \G_1 y$, then $f(x) \G_2 f(y)$;
    \item if $f(x) \G_2 z$, then there exists $y \in X_1$ such that $x \G_1 y$ and $z \leq f(y)$.
\end{itemize}
\end{definition}

Given such a space, the modal possibility operator on the lattice of clopen up-sets $\U(\X)$ is defined by $\Diamond_{\G}(V) := \G^{-1}[V]$.

Conversely, for a bounded distributive lattice $\D$ with a normal operator $\Diamond$, its Priestley space of prime filters $\X(\D)$ is equipped with the relation
\[
    P\G_{\Diamond} Q \iff Q \subseteq \Diamond^{-1}[P].
\]

Then the pair $\langle \X(\D),\G_\Diamond\rangle$ is a $\Diamond$-space and there is a dual equivalence between the category of bounded distributive lattices equipped with a normal possibility operator $\Diamond$ and homomorphisms, and the category of $\Diamond$-spaces and $\Diamond$-morphisms.

We now turn to the necessity operator.

\begin{definition}
\label{def:box-space}
A \emph{$\Box$-space} is a structure $\langle \X, \R, N \rangle$, where $\X$ is a Priestley space, $\R$ is a binary relation on $X$, and $N \subseteq X$, satisfying:
\begin{enumerate}
    \item \label{ax:N-clopen} $N$ is a clopen up-set;
    \item \label{ax:boxR-well-def} if $V$ is a clopen up-set, then $\{x \in N : \R(x) \subseteq V\}$ is a clopen up-set;
    \item \label{ax:boxR-epsilon} if $x\notR y$, then $x \in N$ and there exists a clopen up-set $V$ such that $y \notin V$ and $\R(x) \subseteq V$.
\end{enumerate}

Given $\Box$-spaces $\langle \X_1, \R_1, N_1 \rangle$ and $\langle \X_2, \R_2, N_2 \rangle$, a function
$f \colon X_1 \to X_2$ is a \emph{$\Box$-morphism} if it is a Priestley morphism satisfying:
\begin{align}
    &f^{-1}[N_2] = N_1; \tag{BN}\label{ax:box-morph-N}\\
    &\text{If }x\R_1 y \text{ then } f(x)\R_2 f(y); \tag{BR1}\label{ax:box-morph1}\\
    &\text{If }x \in N_1 \text{ and } f(x)\R_2 z \text{, then } \exists y \in X_1 \text{ such that }x\R_1 y\text{ and } f(y) \leq z. \tag{BR2}\label{ax:box-morph2}
\end{align}
\end{definition}

The following proposition establishes that the operator induced by a $\Box$-space preserves finite meets.

\begin{proposition}
\label{prop:box-R-meet-preserving}
Let $\langle \X, \R, N \rangle$ be a $\Box$-space. In the lattice of clopen up-sets $\U(\X)$, define
\[
\Box_{\R}(V) := \{x \in N : \R(x) \subseteq V\}.
\]
Then $\Box_{\R}$ is a unary operator preserving finite meets.
\end{proposition}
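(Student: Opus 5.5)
The statement has two parts. First, $\Box_{\R}$ is a well-defined operation on $\U(\X)$, which is immediate from axiom~\ref{ax:boxR-well-def} of Definition~\ref{def:box-space}. Second, $\Box_{\R}$ preserves finite meets, which I would check directly from the set-theoretic definition.

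For binary meets, recall that in $\U(\X)$ the meet is intersection. Let $V,W\in\U(\X)$ and $x\in X$. Then $x\in\Box_{\R}(V\cap W)$ iff $x\in N$ and $\R(x)\subseteq V\cap W$. Since $\R(x)\subseteq V\cap W$ holds exactly when $\R(x)\subseteq V$ and $\R(x)\subseteq W$, this is equivalent to $x\in\Box_{\R}(V)\cap\Box_{\R}(W)$. Hence $\Box_{\R}(V\cap W)=\Box_{\R}(V)\cap\Box_{\R}(W)$, which is \eqref{ml1}.

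For the empty meet, note that the top of $\U(\X)$ is $X$, so $\Box_{\R}(X)=\{x\in N:\R(x)\subseteq X\}=N$. Generally $N\neq X$, so $\Box_{\R}$ does not preserve the top element. This matches the intended non-normality of $\Box$: modal lattices only require \eqref{ml1}, not $\Box 1=1$. So I read ``finite meets'' as nonempty finite meets, and handle them by induction from the binary case. I would add a remark that $\Box_{\R}(X)=N$, which shows how $N$ encodes the value $\Box 1$ and why axiom~\ref{ax:N-clopen} is needed.

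The only potential obstacle is this interpretive point about the empty meet. The computation itself is routine, and monotonicity of $\Box_{\R}$ follows as a corollary.
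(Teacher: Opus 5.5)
Your proposal is correct and follows the paper's proof essentially step for step: well-definedness from condition~\ref{ax:boxR-well-def} of Definition~\ref{def:box-space}, the direct set-theoretic computation $\Box_{\R}(V\cap W)=\Box_{\R}(V)\cap\Box_{\R}(W)$, and the observation $\Box_{\R}(X)=N$, which the paper makes right after the proof, so only nonempty finite meets are preserved. One small correction to your side remark: condition~\ref{ax:N-clopen} follows from condition~\ref{ax:boxR-well-def} applied to $V=X$, so the identity $\Box_{\R}(X)=N$ does not show that condition~\ref{ax:N-clopen} is independently needed.
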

\begin{proof}
We first observe that $\Box_{\R}(V)$ is a clopen up-set by condition~\ref{ax:boxR-well-def} of Definition~\ref{def:box-space}. Let $V_1, V_2$ be clopen up-sets. Then
\begin{align*}
\Box_{\R}(V_1 \cap V_2)
&= \{x \in N : \R(x) \subseteq V_1 \cap V_2\} \\
&=\{x \in N : \R(x) \subseteq V_1\} \cap \{x \in N : \R(x) \subseteq V_2\} \\
&= \Box_{\R}(V_1) \cap \Box_{\R}(V_2).
\end{align*}
\end{proof}

Moreover, from Proposition~\ref{prop:box-R-meet-preserving} we obtain
\[
\Box_{\R}(X) = \{x \in N : \R(x) \subseteq X\} = N.
\]

We now show that every bounded distributive lattice with a meet-preserving modal operator gives rise to a $\Box$-space.

\begin{proposition}
\label{prop:XD-box-space}
Let $\D$ be a bounded distributive lattice with a modal operator $\Box$ preserving finite meets, and consider its Priestley space $\X(\D)$ of prime filters. Define the relation $\R_\Box$ by
\[
P\R_\Box Q \in  \iff \Box^{-1}[P] \subseteq Q,
\]
and let $N = \sigma(\Box 1)$. Then:
\begin{enumerate}
    \item For every $a \in D$, $\Box_{\R_\Box}(\sigma(a)) = \sigma(\Box a)$;
    \item If $P\notR_\Box Q$, then $P \in N$ and there exists a clopen up-set $V$ such that $Q \notin V$ and $\R_\Box(P) \subseteq V$.
\end{enumerate}
Therefore $\langle \X(\D), \R_\Box, N \rangle$ is a $\Box$-space.
\end{proposition}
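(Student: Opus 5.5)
The plan is to verify the two numbered items directly from the definition of $\R_\Box$ and the prime filter theorem, and then read off the three conditions of Definition~\ref{def:box-space}. Two facts about $\Box$ will be used repeatedly. First, since $\Box$ preserves finite meets it is monotone, so $\Box a\le\Box 1$ for every $a\in D$. Second, for a prime filter $P$ with $\Box 1\in P$, the set $\Box^{-1}[P]$ is a filter of $\D$: it contains $1$, it is upward closed by monotonicity, and it is closed under meets by \eqref{ml1}.

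For item~1, fix $a\in D$ and a prime filter $P$.

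Suppose first that $\Box a\in P$. Then $\Box 1\in P$ by monotonicity, so $P\in N=\sigma(\Box 1)$. Moreover, for every $Q$ with $P\R_\Box Q$ we have $a\in\Box^{-1}[P]\subseteq Q$. Hence $\R_\Box(P)\subseteq\sigma(a)$ and $P\in\Box_{\R_\Box}(\sigma(a))$.

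Conversely, suppose $P\in N$ and $\Box a\notin P$. Since $\Box 1\in P$, the set $\Box^{-1}[P]$ is a filter, and it does not contain $a$. By the prime filter theorem there is a prime filter $Q\supseteq\Box^{-1}[P]$ with $a\notin Q$. This gives $P\R_\Box Q$ and $Q\notin\sigma(a)$, so $\R_\Box(P)\not\subseteq\sigma(a)$. Therefore $\Box_{\R_\Box}(\sigma(a))=\sigma(\Box a)$.

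The role of $N$ is exactly to handle the prime filters $P$ with $\Box 1\notin P$. For such $P$ the set $\Box^{-1}[P]$ is empty, so $\R_\Box(P)$ is the whole space and the relation alone cannot detect anything. Restricting $\Box_{\R_\Box}$ to $N$ removes these points, and this is the one place where care is needed.

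For item~2, suppose $P\notR_\Box Q$. Then there is $b\in D$ with $\Box b\in P$ and $b\notin Q$. Monotonicity gives $\Box 1\in P$, so $P\in N$. Take $V=\sigma(b)$: then $Q\notin V$, and every $Q'$ with $P\R_\Box Q'$ contains $b$, so $\R_\Box(P)\subseteq V$.

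Finally, we check the three conditions of Definition~\ref{def:box-space}. Condition~\ref{ax:N-clopen} holds because $N=\sigma(\Box 1)$ is a clopen up-set. For condition~\ref{ax:boxR-well-def}, Priestley duality says every clopen up-set of $\X(\D)$ has the form $\sigma(a)$, and item~1 gives $\Box_{\R_\Box}(\sigma(a))=\sigma(\Box a)$, which is again a clopen up-set. Condition~\ref{ax:boxR-epsilon} is exactly item~2. Hence $\langle\X(\D),\R_\Box,N\rangle$ is a $\Box$-space.
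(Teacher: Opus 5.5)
Your proof is correct and follows the paper's argument essentially step for step. Item~1 uses the filter $\Box^{-1}[P]$ (valid since $\Box 1\in P$ and (ML1)) together with the prime filter theorem, item~2 takes $V=\sigma(b)$ for a witness $b\in\Box^{-1}[P]\setminus Q$, and the $\Box$-space conditions follow from these, where you make explicit, as the paper does not, that condition~2 relies on every clopen up-set being of the form $\sigma(a)$.
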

\begin{proof}
\begin{enumerate}
    \item Let $P \in \Box_{\R_\Box}(\sigma(a))$. Then $P \in N$ and $\R_\Box(P) \subseteq \sigma(a)$. We show $\Box a \in P$. Suppose not. Then $a \notin \Box^{-1}[P]$. Since $P \in N$ gives $\Box 1 \in P$, and $\Box$ preserves finite meets, the set $\Box^{-1}[P]$ is a filter. Hence there exists a prime filter $T$ extending it with $a \notin T$. Thus $P \R_\Box T$ and $T \notin \sigma(a)$, contradicting $\R_\Box(P) \subseteq \sigma(a)$. Therefore $\Box a \in P$, so $\Box_{\R_\Box}(\sigma(a)) \subseteq \sigma(\Box a)$.

    The converse is immediate: if $\Box a \in P$, then $a \in \Box^{-1}[P]$; for any $T \in \R_\Box(P)$, we have $a \in T$, so $\R_\Box(P) \subseteq \sigma(a)$; and since $\Box a \leq \Box 1$, we get $P \in N$. Hence $P \in \Box_{\R_\Box}(\sigma(a))$.

    \item If $P\notR_\Box Q$, then $\Box^{-1}[P] \not\subseteq Q$, so choose $a \in \Box^{-1}[P]$ with $a \notin Q$. Then $Q \notin \sigma(a)$ and, as $\Box a \in P$, we have $P \in N$. Moreover, $\R_\Box(P) \subseteq \sigma(a)$; indeed, if $T \in \R_\Box(P)$, then $\Box^{-1}[P] \subseteq T$, hence $a \in T$. Taking $V = \sigma(a)$ yields the desired result.
\end{enumerate}
Thus $\langle \X(\D), \R_\Box, N \rangle$ satisfies the conditions of Definition~\ref{def:box-space}.
\end{proof}

\begin{proposition}
\label{prop:Uf-box-morphism}
Let $\langle \X_1, \R_1, N_1 \rangle$ and $\langle \X_2, \R_2, N_2 \rangle$ be $\Box$-spaces, and let $f \colon X_1 \to X_2$ be a $\Box$-morphism. Then the map
\[
\mathrm{U}(f) \colon \U(\X_2) \to \U(\X_1), \qquad \mathrm{U}(f)(V) := f^{-1}[V]
\]
is a lattice homomorphism preserving the operator $\Box$, that is,
\[
\mathrm{U}(f)(\Box_{\R_2}(V)) = \Box_{\R_1}(\mathrm{U}(f)(V)).
\]
\end{proposition}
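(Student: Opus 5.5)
The lattice-homomorphism part is classical Priestley duality: $f$ is continuous and order-preserving, so $f^{-1}$ sends clopen up-sets to clopen up-sets and preserves finite unions, intersections, $\emptyset$ and the whole space. It therefore remains to prove, for every clopen up-set $V\subseteq X_2$,
\[
f^{-1}[\Box_{\R_2}(V)]=\Box_{\R_1}(f^{-1}[V]),
\]
which I will do by double inclusion, unfolding $\Box_{\R}(V)=\{x\in N:\R(x)\subseteq V\}$.

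For $\subseteq$: let $x\in f^{-1}[\Box_{\R_2}(V)]$, so $f(x)\in N_2$ and $\R_2(f(x))\subseteq V$. By \eqref{ax:box-morph-N}, $x\in f^{-1}[N_2]=N_1$. Now take $y$ with $x\R_1 y$. By \eqref{ax:box-morph1}, $f(x)\R_2 f(y)$, so $f(y)\in V$, i.e. $y\in f^{-1}[V]$. Hence $\R_1(x)\subseteq f^{-1}[V]$ and $x\in\Box_{\R_1}(f^{-1}[V])$.

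For $\supseteq$: let $x\in N_1$ with $\R_1(x)\subseteq f^{-1}[V]$. Again by \eqref{ax:box-morph-N}, $f(x)\in N_2$. Take $z$ with $f(x)\R_2 z$. Since $x\in N_1$, condition \eqref{ax:box-morph2} provides $y$ with $x\R_1 y$ and $f(y)\le z$. Then $y\in f^{-1}[V]$, so $f(y)\in V$, and because $V$ is an up-set, $z\in V$. Thus $\R_2(f(x))\subseteq V$ and $f(x)\in\Box_{\R_2}(V)$.

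The only delicate point is the reverse inclusion. There one needs both the hypothesis $x\in N_1$, which is what licenses \eqref{ax:box-morph2}, and the up-set property of $V$, which absorbs the inequality $f(y)\le z$. Everything else is direct unfolding of the definitions. Since \eqref{ax:box-morph-N} is an equality, it gives membership in $N$ in both directions, and this is exactly what lets both inclusions go through.
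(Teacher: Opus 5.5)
Your proof is correct and is essentially the paper's argument: a double inclusion that uses (BN) to transfer membership in $N$ in both directions, (BR1) for $f^{-1}[\Box_{\R_2}(V)]\subseteq\Box_{\R_1}(f^{-1}[V])$, and (BR2) together with the up-set property of $V$ for the reverse inclusion. You also note that $f^{-1}$ preserves clopen up-sets and the finite lattice operations, which gives the lattice-homomorphism part that the paper leaves implicit.
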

\begin{proof}
We prove both inclusions. Let $x \in \Box_{\R_1}(\mathrm{U}(f)(V))$. Then $x \in N_1$ and $\R_1(x) \subseteq f^{-1}[V]$. By~\eqref{ax:box-morph-N}, it follows that $f(x) \in N_2$. Now let $y \in \R_2(f(x))$. By~\eqref{ax:box-morph2}, there exists $z \in X_1$ such that $x\R_1 z$ and $f(z) \leq y$. Since $z \in \R_1(x)$, we have $f(z) \in V$, and because $V$ is an up-set, it follows that $y \in V$. Therefore $\R_2(f(x)) \subseteq V$, and thus $f(x) \in \Box_{\R_2}(V)$, i.e., $x \in \mathrm{U}(f)(\Box_{\R_2}(V))$.

For the reverse inclusion, let $x \in \mathrm{U}(f)(\Box_{\R_2}(V))$. Then $f(x) \in N_2$ and $\R_2(f(x)) \subseteq V$. By~\eqref{ax:box-morph-N}, it follows that $x \in N_1$. Let $y \in \R_1(x)$. By~\eqref{ax:box-morph1}, $f(x) \R_2 f(y)$, hence $f(y) \in V$. Therefore $y \in f^{-1}[V]$, and so $\R_1(x) \subseteq f^{-1}[V]$. Thus $x \in \Box_{\R_1}(\mathrm{U}(f)(V))$.
\end{proof}

\begin{proposition}
\label{prop:Xh-box-morphism}
Let $\D_1$ and $\D_2$ be bounded distributive lattices with a modal operator $\Box$, and let $h \colon \D_1 \to \D_2$ be a lattice homomorphism preserving $\Box$. Then the map
\[
\mathrm{X}(h) \colon \mathrm{X}(\D_2) \to \mathrm{X}(\D_1), \qquad \mathrm{X}(h)(P) := h^{-1}[P]
\]
is a $\Box$-morphism.
\end{proposition}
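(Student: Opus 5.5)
We need to verify that $\mathrm{X}(h)$ is a Priestley morphism and that it satisfies \eqref{ax:box-morph-N}, \eqref{ax:box-morph1} and \eqref{ax:box-morph2}. Here both $\X(\D_i)$ carry the relations $\R_{\Box}$ and the sets $N_i=\sigma(\Box 1)$ of Proposition~\ref{prop:XD-box-space}, and we assume, as there, that $\Box$ preserves finite meets. That $\mathrm{X}(h)$ is a well-defined Priestley morphism is classical Priestley duality: the preimage of a prime filter under a bounded lattice homomorphism is a prime filter, and $h^{-1}$ is continuous and order-preserving. So the work lies in the three modal conditions. Throughout, we use the identity
\[
\Box^{-1}[h^{-1}[P]] = h^{-1}[\Box^{-1}[P]],
\]
which holds because $h(\Box a)=\Box h(a)$.

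For \eqref{ax:box-morph-N}, we have $h^{-1}[P]\in N_1$ iff $\Box 1\in h^{-1}[P]$ iff $h(\Box 1)=\Box h(1)=\Box 1\in P$ iff $P\in N_2$. For \eqref{ax:box-morph1}, suppose $P\R_\Box Q$, i.e.\ $\Box^{-1}[P]\subseteq Q$. Then
\[
\Box^{-1}[h^{-1}[P]]=h^{-1}[\Box^{-1}[P]]\subseteq h^{-1}[Q],
\]
which is exactly $h^{-1}[P]\,\R_\Box\, h^{-1}[Q]$. Both are direct computations.

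The main step is \eqref{ax:box-morph2}. Let $P\in N_2$ and let $T$ be a prime filter of $\D_1$ with $h^{-1}[P]\,\R_\Box\, T$, i.e.\ $h^{-1}[\Box^{-1}[P]]\subseteq T$. We must find a prime filter $Q$ of $\D_2$ with $\Box^{-1}[P]\subseteq Q$ and $h^{-1}[Q]\subseteq T$. Since $P\in N_2$ gives $\Box 1\in P$, and $\Box$ preserves finite meets, the set $G:=\Box^{-1}[P]$ is a filter of $\D_2$. Let $I$ be the ideal of $\D_2$ generated by $h[D_1\setminus T]$; this is an ideal because $D_1\setminus T$ is an ideal and $h$ preserves joins, so $I$ is the down-set of $h[D_1\setminus T]$.

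We claim $G\cap I=\emptyset$. Otherwise some $b\in G$ satisfies $b\le h(a)$ with $a\notin T$. Monotonicity of $\Box$ then gives $\Box h(a)\in P$, that is, $h(\Box a)\in P$, so $a\in h^{-1}[\Box^{-1}[P]]\subseteq T$, a contradiction. By the prime filter theorem there is a prime filter $Q\supseteq G$ with $Q\cap I=\emptyset$. This $Q$ works: $P\R_\Box Q$ holds, and if $a\notin T$ then $h(a)\in I$, so $h(a)\notin Q$; hence $h^{-1}[Q]\subseteq T$.

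The only delicate point is the use of $P\in N_2$, which is needed to make $\Box^{-1}[P]$ a filter containing $1$ (in particular nonempty). This is precisely why \eqref{ax:box-morph2} is only required for points in $N$.
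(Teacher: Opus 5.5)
Your proof is correct and follows the same route as the paper: (BN) and (BR1) are direct computations from $h(\Box a)=\Box h(a)$. For (BR2), both you and the paper separate the filter $\Box^{-1}[P]$ (a filter because $P\in N$) from the image under $h$ of the complement of the given prime filter, using the prime filter theorem. Your passage to the down-set of $h[D_1\setminus T]$, using monotonicity of $\Box$, adds care the paper omits: it separates directly from $h[Q^c]$, which is join-closed but need not be a down-set.
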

\begin{proof}
We verify that $\mathrm{X}(h)$ satisfies the defining conditions of a $\Box$-morphism.

First, $\mathrm{X}(h)^{-1}[N_1] = N_2$ follows immediately from $h(\Box_1 1_1) = \Box_2 1_2$ and the definitions of $N_1$, $N_2$ and $\mathrm{X}(h)$.

Next, we prove that if $P\R_{\Box_2}Q$, then $\mathrm{X}(h)(P)\R_{\Box_1} \mathrm{X}(h)(Q)$, i.e., 
\[
\text{if }\Box_2^{-1}[P] \subseteq Q\text{, then }\Box_1^{-1}[h^{-1}[P]] \subseteq h^{-1}[Q].
\]
Let $x \in \Box_1^{-1}[h^{-1}[P]]$. Then $\Box_1 x \in h^{-1}[P]$, so $h(\Box_1 x) \in P$. Since $h$ preserves the modal operator, we have $h(\Box_1 x) = \Box_2(h(x))$, hence $\Box_2(h(x)) \in P$, and therefore $h(x) \in \Box_2^{-1}[P] \subseteq Q$. Thus $x \in h^{-1}[Q]$.

Finally, we verify condition~\eqref{ax:box-morph2}. Assume $P \in N_2$ and $\mathrm{X}(h)(P) \R_{\Box_1} Q$, that is, $\Box_1^{-1}[h^{-1}[P]] \subseteq Q$.
We prove that there exists $P' \in \mathrm{X}(\D_2)$ such that
\[
\Box_2^{-1}[P] \subseteq P' \quad \text{and} \quad h^{-1}[P'] \subseteq Q.
\]

Since $P \in N_2$, the set $\Box_2^{-1}[P]$ is a filter. From the hypothesis, if $h(\Box_1 a) \in P$, then $a \in Q$. Consider the set $h[Q^c]$ and observe that $h[Q^c] \cap \Box_2^{-1}[P] = \emptyset$. Indeed, if $b \in h[Q^c] \cap \Box_2^{-1}[P]$, then $\Box_2 b \in P$ and $b = h(a)$ for some $a \in Q^c$. Hence $\Box_2(h(a)) \in P$, so $h(\Box_1 a) \in P$, and by the hypothesis $a \in Q$, a contradiction.

Therefore, there exists a prime filter $P'$ such that
\[
\Box_2^{-1}[P] \subseteq P' \quad \text{and} \quad h[Q^c] \cap P' = \emptyset.
\]
It follows that $h^{-1}[h[Q^c]] \cap h^{-1}[P'] = \emptyset$. Since $Q^c \subseteq h^{-1}[h[Q^c]]$, we obtain $Q^c \cap h^{-1}[P'] = \emptyset$, and hence $h^{-1}[P'] \subseteq Q$. This completes the proof.
\end{proof}

The following theorem establishes that the representation on the topological side is faithful and recovers the relational structure.

\begin{theorem}
\label{thm:epsilon-box-morphism}
Let $\langle \X,\R,N\rangle$ be a $\Box$-space and consider the $\Box$-space given by 
\[
\langle \X(\U(\X)), \R_{\Box_{\R}}, \sigma_{\U(\X)}(\Box_{\R}(X))\rangle.
\] 
Then the map
\[
\epsilon \colon X \longrightarrow \mathrm{X}(\U(\X)), \qquad x \mapsto \epsilon(x) = \{V \in \U(\X) : x \in V\},
\]
is an isomorphism of $\Box$-spaces. In particular, $\epsilon$ is a homeomorphism, an order isomorphism and it satisfies
\[
\epsilon[N]=\sigma_{\U(\X)}(\Box_{\R}(X)),
\]
and 
\[
x \R y\iff \epsilon(x)\R_{\Box_{\R}} \epsilon(y).
\]
\end{theorem}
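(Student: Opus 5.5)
By classical Priestley duality, $\epsilon$ is already a homeomorphism and an order isomorphism onto $\mathrm{X}(\U(\X))$. So it remains to verify two things: $\epsilon[N]=\sigma_{\U(\X)}(\Box_{\R}(X))$, and that $\epsilon$ both preserves and reflects the relation. For the first, we recall the remark after Proposition~\ref{prop:box-R-meet-preserving} that $\Box_{\R}(X)=N$, and $N$ is a clopen up-set by condition~\ref{ax:N-clopen}. Then $\epsilon(x)\in\sigma(N)$ iff $N\in\epsilon(x)$ iff $x\in N$. Since $\epsilon$ is a bijection, this gives $\epsilon[N]=\sigma(\Box_{\R}(X))$.

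For the forward direction of the relation, assume $x\R y$. We must show $\Box_{\R}^{-1}[\epsilon(x)]\subseteq\epsilon(y)$. Take $V\in\U(\X)$ with $\Box_{\R}(V)\in\epsilon(x)$, i.e.\ $x\in\Box_{\R}(V)$. Then $x\in N$ and $\R(x)\subseteq V$. Since $y\in\R(x)$, we get $y\in V$, i.e.\ $V\in\epsilon(y)$. This direction is immediate from the definitions.

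For the converse, I argue by contraposition using condition~\ref{ax:boxR-epsilon} of Definition~\ref{def:box-space}. Suppose $x\notR y$. Then $x\in N$, and there is a clopen up-set $V$ with $\R(x)\subseteq V$ and $y\notin V$. Hence $x\in\{z\in N:\R(z)\subseteq V\}=\Box_{\R}(V)$, so $V\in\Box_{\R}^{-1}[\epsilon(x)]$, while $V\notin\epsilon(y)$. Therefore $\Box_{\R}^{-1}[\epsilon(x)]\not\subseteq\epsilon(y)$, that is, $\epsilon(x)\notR_{\Box_{\R}}\epsilon(y)$.

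Finally, I confirm that the target is a genuine $\Box$-space and that $\epsilon$ is a $\Box$-isomorphism. The target is a $\Box$-space by Proposition~\ref{prop:XD-box-space}, applied to $\U(\X)$ with the meet-preserving operator $\Box_{\R}$ of Proposition~\ref{prop:box-R-meet-preserving}; its distinguished set is $\sigma(\Box_{\R}(X))$, which is exactly the $N$ of that proposition. A bijection that preserves and reflects $\R$ and satisfies $\epsilon^{-1}[\sigma(\Box_\R(X))]=N$ satisfies (BN), (BR1), and (BR2) in both directions. For (BR2), take $y=\epsilon^{-1}(z)$ and use reflection of $\R$. Thus $\epsilon$ and $\epsilon^{-1}$ are $\Box$-morphisms.

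I do not expect a serious obstacle. The step requiring most care is the converse inclusion, where condition~\ref{ax:boxR-epsilon} must supply both $x\in N$ and the separating clopen up-set; this is precisely why that axiom was stated in that form.
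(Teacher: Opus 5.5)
Your proposal is correct and follows essentially the same argument as the paper: Priestley duality for the homeomorphism and order part, $\Box_{\R}(X)=N$ to get $\epsilon^{-1}[\sigma_{\U(\X)}(N)]=N$, the direct definitional argument for $x\R y\Rightarrow \epsilon(x)\R_{\Box_{\R}}\epsilon(y)$, and condition~\ref{ax:boxR-epsilon} of Definition~\ref{def:box-space} for the converse. You argue the converse by contraposition where the paper argues by contradiction, and your explicit check that $\epsilon$ and $\epsilon^{-1}$ satisfy \eqref{ax:box-morph-N}, \eqref{ax:box-morph1} and \eqref{ax:box-morph2} fills in a detail the paper leaves implicit.
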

\begin{proof}
By Priestley duality, $\epsilon$ is already known to be an order-isomorphism and a homeomorphism. It therefore remains to verify that $\epsilon$ preserves the distinguished subset and the relation.

Since $\Box_{\R}(X)=N$, it suffices to prove that $\epsilon^{-1}[\sigma_{\U(\X)}(N)] = N$. Indeed, if $x \in \epsilon^{-1}[\sigma_{\U(\X)}(N)]$, then $\epsilon(x) \in \sigma_{\U(\X)}(N)$, which holds if and only if $N \in \epsilon(x)$, and this is equivalent to $x \in N$.

We now prove that $x \R y$ if and only if $\epsilon(x)\R_{\Box_{\R}} \epsilon(y)$.

Suppose that $\epsilon(x)\R_{\Box_{\R}} \epsilon(y)$ and that $x \notR y$. By condition~\ref{ax:boxR-epsilon} of Definition~\ref{def:box-space}, $x \in N$ and there exists $V \in \U(\X)$ such that $\R(x) \subseteq V$ and $y \notin V$. Then $x \in \Box_{\R}(V)$, and consequently $V \in \Box_{\R}^{-1}[\epsilon(x)] \subseteq \epsilon(y)$, which implies that $y \in V$, a contradiction. Therefore, $x\R y$.

Conversely, suppose that $x\R y$, and let $V \in \Box_{\R}^{-1}[\epsilon(x)]$. Then $x \in \Box_{\R}(V)$, which implies that $x \in N$ and $\R(x) \subseteq V$. Since $y \in \R(x)$, we obtain $y \in V$, and thus $V \in \epsilon(y)$. Hence, $\Box_{\R}^{-1}[\epsilon(x)] \subseteq \epsilon(y)$, which shows that $\epsilon(x)\R_{\Box_{\R}} \epsilon(y)$.
\end{proof}

The previous result shows that the relational structure of a $\Box$-space can be fully recovered from the operator $\Box_{\R}$ on clopen up-sets. We now establish the corresponding representation result on the algebraic side.

\begin{theorem}
\label{thm:sigma-box-iso}
Let $\D$ be a distributive lattice with a modal operator $\Box$ preserving finite meets. Then $\D$ is isomorphic to $\langle\U(\X(\D), \Box_{\R_\Box}\rangle$ via the map
\[
\sigma \colon \D \longrightarrow \U(\X(\D)), \qquad a \mapsto \sigma(a) = \{P \in \mathrm{X}(\D) : a \in P\}.
\]
\end{theorem}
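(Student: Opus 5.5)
By Priestley duality, $\sigma$ is already an isomorphism of bounded distributive lattices from $\D$ onto $\U(\X(\D))$. So the plan reduces to one claim: $\sigma$ intertwines $\Box$ with $\Box_{\R_\Box}$, that is,
\[
\sigma(\Box a)=\Box_{\R_\Box}(\sigma(a))\qquad\text{for every } a\in D.
\]
Here $\R_\Box$ is the relation and $N=\sigma(\Box 1)$ the distinguished set from Proposition~\ref{prop:XD-box-space}.

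This identity is exactly item~1 of Proposition~\ref{prop:XD-box-space}, so I would invoke it directly. For completeness, I would recall the two inclusions.

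For $\sigma(\Box a)\subseteq\Box_{\R_\Box}(\sigma(a))$, take $P$ with $\Box a\in P$.
\begin{itemize}
\item Since $\Box a\le\Box 1$ by monotonicity (which follows from meet preservation), we get $\Box 1\in P$, so $P\in N$.
\item Any $Q$ with $\Box^{-1}[P]\subseteq Q$ contains $a$, so $\R_\Box(P)\subseteq\sigma(a)$.
\end{itemize}

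For the reverse inclusion, take $P\in N$ with $\R_\Box(P)\subseteq\sigma(a)$, and suppose $\Box a\notin P$.
\begin{itemize}
\item Because $\Box 1\in P$ and $\Box$ preserves binary meets, $\Box^{-1}[P]$ is a filter: it contains $1$, is an up-set, and is closed under meets.
\item It does not contain $a$. By the prime filter theorem there is a prime $Q\supseteq\Box^{-1}[P]$ with $a\notin Q$.
\item Then $P\R_\Box Q$ but $Q\notin\sigma(a)$, a contradiction.
\end{itemize}

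The only delicate point is the role of $N$. Without the restriction to $N$, the set $\Box^{-1}[P]$ could be empty, because $\Box 1$ need not be $1$. In that case the vacuous condition $\R_\Box(P)\subseteq\sigma(a)$ would hold for every $a$, and the inclusion would fail. The choice $N=\sigma(\Box 1)$ repairs this, and it also gives $\sigma(\Box 1)=N=\Box_{\R_\Box}(X(\D))$, consistent with the identity $\Box_{\R}(X)=N$ noted after Proposition~\ref{prop:box-R-meet-preserving}.

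Combining the modal compatibility with the Priestley lattice isomorphism, $\sigma$ is an isomorphism of lattices with a meet-preserving operator. Its inverse then automatically preserves $\Box$ as well.
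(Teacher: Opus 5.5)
Your proof is correct and follows the paper's argument: the lattice isomorphism comes from Priestley duality, and the modal compatibility is item~1 of Proposition~\ref{prop:XD-box-space}, whose proof you reproduce faithfully. One small slip in your side remark, which does not affect the proof: if $\Box 1\notin P$ then $\Box^{-1}[P]=\emptyset$ makes $\R_\Box(P)$ the whole space $\mathrm{X}(\D)$, not empty, so without $N$ the condition $\R_\Box(P)\subseteq\sigma(a)$ holds only for $a=1$ and the inclusion fails exactly there, not ``for every $a$''.
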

\begin{proof}
The fact that $\sigma \colon D \to \U(\X(\D))$ is a lattice isomorphism is standard from Priestley duality. We have already shown in Proposition~\ref{prop:XD-box-space}(1) that $\sigma(\Box a) = \Box_{\R_\Box}(\sigma(a))$; thus $\sigma$ preserves the modal operators. Therefore, $\sigma$ is an isomorphism of modal lattices.
\end{proof}

As an immediate consequence of the previous theorems, we obtain the following duality result. 

\begin{corollary}
The category of bounded distributive lattices equipped with a finite meet-preserving $\Box$ operator and homomorphisms is dually equivalent to the category of $\Box$-spaces and $\Box$-morphisms.
\end{corollary}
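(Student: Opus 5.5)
The plan is to assemble the duality from the two constructions already available and to check the functoriality and naturality conditions. On the spatial side, Proposition~\ref{prop:Uf-box-morphism} already gives a contravariant functor $\mathrm{U}$ from $\Box$-spaces to meet-preserving modal lattices. On the algebraic side, Propositions~\ref{prop:XD-box-space} and~\ref{prop:Xh-box-morphism} give a contravariant functor $\mathrm{X}$ in the opposite direction. Theorems~\ref{thm:epsilon-box-morphism} and~\ref{thm:sigma-box-iso} supply the candidate unit and counit, $\epsilon$ and $\sigma$. What remains is to confirm well-definedness on objects and morphisms, the functor laws, and the naturality of $\epsilon$ and $\sigma$.

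First, I will check that $\mathrm{U}$ lands in the right category. For a $\Box$-space $\langle\X,\R,N\rangle$, the structure $\U(\X)$ is a bounded distributive lattice, and $\Box_\R$ preserves finite meets by Proposition~\ref{prop:box-R-meet-preserving}. For a $\Box$-morphism $f$, the preimage map $f^{-1}$ is a bounded lattice homomorphism by Priestley duality, and it commutes with $\Box$ by Proposition~\ref{prop:Uf-box-morphism}. Similarly, $\mathrm{X}(\D)$ is a $\Box$-space by Proposition~\ref{prop:XD-box-space}, and $\mathrm{X}(h)$ is a $\Box$-morphism by Proposition~\ref{prop:Xh-box-morphism}.

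Next, I will verify that composition and identities are preserved. This is immediate, because both functors act by preimage, exactly as in Priestley duality. There is one point to check here: $\Box$-morphisms compose. Condition~\eqref{ax:box-morph-N} composes directly, and so does \eqref{ax:box-morph1}. For \eqref{ax:box-morph2}, suppose $x\in N_1$ and $g(f(x))\R_3 w$. Since $f(x)\in N_2$, the condition for $g$ gives some $z$ with $f(x)\R_2 z$ and $g(z)\le w$. The condition for $f$ then gives $y$ with $x\R_1 y$ and $f(y)\le z$. Monotonicity of $g$ yields $g(f(y))\le w$, as required.

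Then I will check naturality. For $\sigma$, take a homomorphism $h$. The identity $\mathrm{U}(\mathrm{X}(h))(\sigma(a))=\sigma(h(a))$ is the classical Priestley naturality square, since it reduces to $h^{-1}[P]\ni a$ if and only if $h(a)\in P$. No modal input is needed. For $\epsilon$, the identity $\mathrm{X}(\mathrm{U}(f))(\epsilon(x))=\epsilon(f(x))$ likewise reduces to $x\in f^{-1}[V]$ if and only if $f(x)\in V$.

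The step I expect to need care is that $\epsilon$ and $\sigma$ are isomorphisms in the respective categories, not merely bijections preserving the structure. For $\sigma$ this is automatic: a bijective homomorphism of modal lattices has an inverse that is again a homomorphism. For $\epsilon$, I will show that $\epsilon^{-1}$ is a $\Box$-morphism.
\begin{itemize}
\item Condition~\eqref{ax:box-morph-N} and condition~\eqref{ax:box-morph1} follow from the equalities $\epsilon[N]=\sigma_{\U(\X)}(\Box_\R(X))$ and the biconditional $x\R y \iff \epsilon(x)\R_{\Box_\R}\epsilon(y)$ of Theorem~\ref{thm:epsilon-box-morphism}. The same facts give \eqref{ax:box-morph1} and \eqref{ax:box-morph-N} for $\epsilon$ itself.
\item Condition~\eqref{ax:box-morph2} holds for both $\epsilon$ and $\epsilon^{-1}$ by taking $y$ to be the preimage (respectively the image) of the given related point, with equality in place of $\le$.
\end{itemize}
Combining these, $\mathrm{U}\circ\mathrm{X}\cong\mathrm{Id}$ and $\mathrm{X}\circ\mathrm{U}\cong\mathrm{Id}$ naturally, which gives the claimed dual equivalence.
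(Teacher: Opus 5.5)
Your proposal is correct and follows the same route as the paper, which presents the corollary as an immediate consequence of Propositions~\ref{prop:box-R-meet-preserving}--\ref{prop:Xh-box-morphism} and Theorems~\ref{thm:epsilon-box-morphism} and~\ref{thm:sigma-box-iso}. Your extra checks are the routine verifications the paper leaves implicit, and you carry them out correctly: that $\Box$-morphisms compose (in particular condition~\eqref{ax:box-morph2}), that $\epsilon$ and $\sigma$ are natural, and that $\epsilon^{-1}$ is again a $\Box$-morphism.
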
 

\subsection{Topological Duality for Modal Kleene triples}
\label{subsec:topological-duality-modal-kleene}

In this subsection, we extend the classical Priestley duality framework to the setting of modal Kleene algebras. To this end, we introduce the notion of a \emph{modal Kleene spaces}, which enriches the structure of dual Kleene triples with additional relations. We then establish a dual correspondence between modal Kleene triples and modal Kleene spaces, providing the basis for a topological duality theory.

\begin{definition}
\label{def:modal-dual-kleene-triple}
A \emph{modal Kleene space} is a tuple $\mathcal{X}=\langle \X,\G,\R,\Q,N,H\rangle$ such that:
\begin{itemize}
    \item $\langle \X, H, \Q \rangle$ is a dual Kleene triple;
    \item $\langle \X, \G \rangle$ is a $\Diamond$-space;
    \item $\langle \X, \R, N \rangle$ is a $\Box$-space.
\end{itemize}
In addition, the following conditions are required to hold for all $V, W \in \U(\X)$:
\begin{equation}
\label{ax:mdkt-filter}
\text{If } V \notcon_{\Q} W \text{ and } H \subseteq V \cup W, \text{ then } H \subseteq \Box_{\R} V \cup \Diamond_{\G} W,
\end{equation}
and, for all $x, y, y' \in X$,
\begin{equation}
\label{ax:mdkt-relation}
\text{If } x\Q y,\ x \in N \text{ and } y' \in \G(y), \text{ then } \exists x' \in \R(x) \text{ such that } x'\Q y'.
\end{equation}
\end{definition}

Intuitively, Condition~\eqref{ax:mdkt-filter} is the topological counterpart of the modal filter axiom~\eqref{cond:filter}, while Condition~\eqref{ax:mdkt-relation} is the topological counterpart of the modal contact axiom~\eqref{ax:modal-contact}, relating $\Q$, $\G$, and $\R$.

\begin{definition}
\label{def:mdkt-morphism}
Given modal Kleene spaces $\mathcal{X}_1=\langle \X_1,\G_1,\R_1,Q_1,N_1,H_1\rangle$ and $\mathcal{X}_2=\langle \X_2,\G_2,\R_2,\Q_2,N_2,H_2\rangle$, a function $f\colon X_1\to X_2$ is a \emph{modal dual Kleene morphism} if it is
\begin{itemize}
    \item a dual Kleene morphism;
    \item a $\Diamond$-morphism;
    \item a $\Box$-morphism.
\end{itemize}

The category of modal Kleene spaces and modal dual Kleene morphisms is denoted by $\mathbf{mKt}^{*}$.
\end{definition}

From the results established in Proposition~\ref{prop:Uf-box-morphism} and the results of~\cite{jalali, petrovich1996}, we obtain the following consequences:
\begin{itemize}
    \item If $f$ is a morphism of $\mathbf{mKt}^{*}$, then $\mathrm{U}(f)$ is a morphism of $\mathbf{mKt}$;
    \item If $h$ is a morphism of $\mathbf{mKt}$, then $\mathrm{X}(h)$ is a morphism of $\mathbf{mKt}^{*}$.
\end{itemize}

\begin{proposition}
\label{prop:triple-to-mdkt}
Let $\T = \langle \D, \con, F \rangle$ be a modal Kleene triple. Then
\[
\mathrm{X}(\T) = \langle \X(\D), \G_\Diamond, \R_\Box, \Q_{\con},\sigma(\Box 1), \mathcal{H}(F) \rangle
\]
is a modal Kleene space.
\end{proposition}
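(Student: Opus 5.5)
Most of the structure is already available from earlier results. By Proposition~\ref{prop:triple-to-dual}, $\langle \X(\D), \mathcal{H}(F), \Q_{\con}\rangle$ is a dual Kleene triple. By the duality for additive operators recalled after Definition~\ref{def:diamond-space}, $\langle \X(\D),\G_\Diamond\rangle$ is a $\Diamond$-space; this uses \eqref{ml2} and \eqref{ml3}. By Proposition~\ref{prop:XD-box-space}, $\langle \X(\D),\R_\Box,\sigma(\Box 1)\rangle$ is a $\Box$-space; this uses \eqref{ml1}. It therefore remains to verify the two coupling conditions \eqref{ax:mdkt-filter} and \eqref{ax:mdkt-relation}.

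For \eqref{ax:mdkt-filter} I will work through $\sigma$. Every clopen up-set has the form $\sigma(a)$. By Theorem~\ref{thm:sigma-iso}, $\sigma(a)\notcon_{\Q_\con}\sigma(b)$ holds if and only if $a\notcon b$. Moreover, $\mathcal{H}(F)\subseteq\sigma(a)\cup\sigma(b)=\sigma(a\vee b)$ holds if and only if $a\vee b\in F$. This is the identity $\sigma[F]=\mathcal{F}(\mathcal{H}(F))$: a prime filter containing $F$ but not $c$ exists whenever $c\notin F$, by the prime filter theorem. Condition \eqref{cond:filter} then gives $\Box a\vee\Diamond b\in F$. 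Using $\Box_{\R_\Box}\sigma(a)=\sigma(\Box a)$ from Proposition~\ref{prop:XD-box-space}(1), together with the analogous identity $\Diamond_{\G_\Diamond}\sigma(b)=\sigma(\Diamond b)$ from the $\Diamond$-duality, this yields $\mathcal{H}(F)\subseteq\Box_{\R_\Box}\sigma(a)\cup\Diamond_{\G_\Diamond}\sigma(b)$.

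The main step is \eqref{ax:mdkt-relation}, which is where \eqref{ax:modal-contact} enters. Assume $P\Q_\con P'$, $\Box 1\in P$, and $P'\G_\Diamond Q'$, that is, $Q'\subseteq\Diamond^{-1}[P']$. We need a prime filter $T$ with $\Box^{-1}[P]\subseteq T$ and $T\Q_\con Q'$. Since $\Box 1\in P$, the set $\Box^{-1}[P]$ is a filter. Let $I=\{c: \exists b\in Q',\ c\notcon b\}$. I will check that $I$ is an ideal. It is a down-set by \eqref{C3} and \eqref{C2}. It is closed under joins: if $c_1\notcon b_1$ and $c_2\notcon b_2$, then $c_1\vee c_2\notcon b_1\wedge b_2$, using \eqref{C4} together with monotonicity, and $b_1\wedge b_2\in Q'$. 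Next, $\Box^{-1}[P]\cap I=\emptyset$. Otherwise some $a$ satisfies $\Box a\in P$ and $a\notcon b$ with $b\in Q'$. Then \eqref{ax:modal-contact} gives $\Box a\notcon\Diamond b$, and $\Diamond b\in P'$, which contradicts $P\Q_\con P'$. The prime filter theorem now yields a prime $T\supseteq\Box^{-1}[P]$ disjoint from $I$. Disjointness means exactly that $c\con b$ for all $c\in T$ and $b\in Q'$, that is, $T\Q_\con Q'$.

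I expect the ideal verification for $I$ to be the most delicate point, since it relies on the contact axioms \eqref{C3} and \eqref{C4} combined with symmetry \eqref{C2}. The rest is direct bookkeeping through $\sigma$.
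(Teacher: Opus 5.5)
Your proposal is correct and follows the paper's proof. You use the same reduction to \eqref{ax:mdkt-filter} and \eqref{ax:mdkt-relation} and the same transfer of \eqref{cond:filter} through $\sigma$, and your check $\Box^{-1}[P]\cap I=\emptyset$ is exactly the paper's claim $\Box^{-1}[P]\times Q'\subseteq\con$, derived from \eqref{ax:modal-contact}.

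The only difference is the extension step. The paper cites Lemma~8 of \cite{duntsch2008distributive} to extend both filters to prime filters, whereas you separate $\Box^{-1}[P]$ from the ideal $I$ directly with the prime filter theorem. Your version is more self-contained and suffices because $Q'$ is already prime. It also gives properness of $\Box^{-1}[P]$ for free, since $0\in I$ by \eqref{C0}.
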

\begin{proof}
The fact that $\langle \X(\D), \mathcal{H}(F) ,\Q_{\con}\rangle$ is a dual Kleene triple was established in Proposition~\ref{prop:triple-to-dual} (see also~\cite{jalali}). Moreover, it is known from~\cite{petrovich1996} that $\langle \X(\D), \G_\Diamond \rangle$ is a $\Diamond$-space, and from Proposition~\ref{prop:XD-box-space} that $\langle \X(\D), \R_\Box, \sigma(\Box 1) \rangle$ is a $\Box$-space. It remains to verify conditions~\eqref{ax:mdkt-filter} and~\eqref{ax:mdkt-relation}.

We first verify condition~\eqref{ax:mdkt-filter}. Let $V, W$ be clopen up-sets. Then $V = \sigma(a)$ and $W = \sigma(b)$ for some $a, b \in D$. Suppose that $\sigma(a) \notcon_{\Q_{\con}} \sigma(b)$ and that $\sigma(a) \cup \sigma(b) \in \mathcal{F}(\mathcal{H}(F))$. We must show that
\[
\Box_{\R_\Box}(\sigma(a)) \cup \Diamond_{\G_\Diamond}(\sigma(b)) \in \mathcal{F}(\mathcal{H}(F)).
\]
By previous results, $\Box_{R_\Box}(\sigma(a)) = \sigma(\Box a)$ and $\Diamond_{G_\Diamond}(\sigma(b)) = \sigma(\Diamond b)$. Moreover, by Theorem~\ref{thm:sigma-iso}, the assumptions imply that $a \notcon b$ and $\mathcal{F}(\mathcal{H}(F)) = \sigma[F]$. Therefore, it suffices to prove that if $a \notcon b$ and $\sigma(a \vee b) \in \sigma[F]$, then $\sigma(\Box a \vee \Diamond b) \in \sigma[F]$, which is precisely axiom~\eqref{cond:filter} of modal Kleene triples.

We now verify condition~\eqref{ax:mdkt-relation}. Let $P_1, P_2, P_2' \in \mathrm{X}(\D)$ be such that
\[
P_1 \Q_{\con} P_2, \quad P_1 \in \sigma(\Box 1) \quad \text{and} \quad P_2' \in \G_\Diamond(P_2).
\]
We aim to show that there exists $P_1' \in \R_\Box(P_1)$ such that $P_1'\Q_{\con} P_2'$.

We claim that $\Box^{-1}[P_1] \times P_2' \subseteq \con$. Suppose otherwise. Then there exist $x \in \Box^{-1}[P_1]$ and $y \in P_2'$ such that $x \notcon y$. Since $\D$ is a modal contact lattice, it follows from~\eqref{ax:modal-contact} that $\Box x \notcon \Diamond y$. As $P_2' \subseteq \Diamond^{-1}[P_2]$, we have $\Diamond y \in P_2$, and since $x \in \Box^{-1}[P_1]$, we have $\Box x \in P_1$. Because $P_1 \Q_{\con} P_2$, it follows from~\eqref{eq:dual-contact} that $\Box x \con \Diamond y$, which contradicts $\Box x \notcon \Diamond y$. Therefore, $\Box^{-1}[P_1] \times P_2' \subseteq \con$.

Moreover, $\Box^{-1}[P_1]$ is a proper filter. Indeed, if $0 \in \Box^{-1}[P_1]$, it would follow that $0 \con 1$, which is impossible.

By Lemma~8 of~\cite{duntsch2008distributive}, applied to the proper filters $\Box^{-1}[P_1]$ and $P_2'$ under the condition $\Box^{-1}[P_1] \times P_2' \subseteq \con$, there exist $P_1', P_2'' \in \mathrm{X}(\D)$ such that $\Box^{-1}[P_1] \subseteq P_1'$, $P_2' \subseteq P_2''$, and $P_1' \times P_2'' \subseteq \con$. In particular, since $P_2' \subseteq P_2''$, it follows that $P_1' \times P_2' \subseteq \con$, and hence $P_1'\Q_{\con} P_2'$. Moreover, from $\Box^{-1}[P_1] \subseteq P_1'$ we obtain $P_1' \in \R_\Box(P_1)$. This completes the proof.
\end{proof}

We now establish the converse direction, showing that from any modal Kleene space we can recover a modal Kleene triple.

\begin{proposition}
\label{prop:mdkt-to-triple}
Let $\mathcal{X} = \langle \X, \G, \R, \Q,N, H \rangle$ be a modal Kleene space. Then $\mathrm{U}(\mathcal{X}) = \langle \U(\X), \con_{\Q}, \mathcal{F}(H) \rangle$ equipped with $\Box_{\R}$ and $\Diamond_{\G}$ is a modal Kleene triple.
\end{proposition}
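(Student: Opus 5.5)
The plan is to check the three layers of structure separately and then the two modal compatibility conditions. By Proposition~\ref{prop:dual-to-triple}, $\langle \U(\X), \con_{\Q}, \mathcal{F}(H)\rangle$ is a Kleene triple. Condition~\eqref{cond:existence} therefore already holds, and $\mathcal{F}(H)$ is a Boolean filter. Because $\langle \X,\G\rangle$ is a $\Diamond$-space, the duality for additive operators \cite{cignoli1991remarks, petrovich1996} shows that $\Diamond_{\G}$ maps $\U(\X)$ into itself and satisfies \eqref{ml2}. It also satisfies \eqref{ml3}, since $\G^{-1}[\emptyset]=\emptyset$. Because $\langle \X,\R,N\rangle$ is a $\Box$-space, Proposition~\ref{prop:box-R-meet-preserving} gives that $\Box_{\R}$ is well defined and satisfies \eqref{ml1}. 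Hence $\U(\X)$ with $\Box_{\R},\Diamond_{\G}$ is a modal lattice, and only \eqref{cond:filter} and \eqref{ax:modal-contact} remain.

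Condition~\eqref{cond:filter} is a direct translation. A clopen up-set $U$ lies in $\mathcal{F}(H)$ exactly when $H\subseteq U$. So the hypotheses $V\notcon_{\Q} W$ and $V\cup W\in\mathcal{F}(H)$ say precisely that $V\notcon_{\Q}W$ and $H\subseteq V\cup W$. Axiom~\eqref{ax:mdkt-filter} then gives $H\subseteq \Box_{\R}V\cup\Diamond_{\G}W$, that is, $\Box_{\R}V\cup\Diamond_{\G}W\in\mathcal{F}(H)$.

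The main step is \eqref{ax:modal-contact}: if $\Box_{\R}V \con_{\Q} \Diamond_{\G}W$, then $V\con_{\Q}W$. By definition of $\con_{\Q}$, the hypothesis gives $x\in\Box_{\R}V$ and $y\in\Diamond_{\G}W$ with $x\Q y$. From $x\in\Box_{\R}V$ we get $x\in N$ and $\R(x)\subseteq V$. From $y\in \G^{-1}[W]$ there is some $y'\in\G(y)\cap W$. Now $x\Q y$, $x\in N$ and $y'\in\G(y)$, so axiom~\eqref{ax:mdkt-relation} produces $x'\in\R(x)$ with $x'\Q y'$. Then $x'\in V$, $y'\in W$ and $x'\Q y'$, so $V\con_{\Q}W$.

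I expect no real obstacle. The one point to handle carefully is the use of $N$: the definition of $\Box_{\R}$ restricts to $N$, and this is exactly what supplies the hypothesis $x\in N$ that \eqref{ax:mdkt-relation} needs. One should also confirm that $\Diamond_{\G}$ as used here coincides with the $\Diamond$-space operator $\G^{-1}[-]$, so that the cited duality for $\Diamond$-spaces applies.
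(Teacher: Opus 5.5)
Your proof is correct and follows the paper's route. Condition \eqref{cond:filter} is read off directly from \eqref{ax:mdkt-filter}. Condition \eqref{ax:modal-contact} is obtained from a pair $x\Q y$ with $x\in\Box_{\R}V$ and $y\in\Diamond_{\G}W$: you extract $x\in N$, $\R(x)\subseteq V$ and a witness $y'\in\G(y)\cap W$, then apply \eqref{ax:mdkt-relation}.

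The differences are cosmetic. The paper phrases the contact step as a contradiction starting from $V\notcon_{\Q}W$, whereas you prove the implication directly. You also spell out the modal-lattice axioms \eqref{ml1}--\eqref{ml3}, which the paper leaves implicit.
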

\begin{proof}
It remains to verify Conditions~\eqref{ax:modal-contact} and~\eqref{cond:filter}. 

To prove Condition~\eqref{ax:modal-contact}, let $V, W$ be clopen up-sets. Suppose that $V \notcon_{\Q} W$. We will verify that $\Box_{\R} V \times \Diamond_{\G} W \subseteq \Q^c$. Suppose, towards a contradiction, that there exists $(x,y) \in \Box_{\R} V \times \Diamond_{\G} W$ such that $x \Q y$. From $(x,y) \in \Box_{\R} V \times \Diamond_{\G} W$ it follows that $x \in N$, $\R(x) \subseteq V$, and there exists $y' \in W \cap \G(y)$. By condition~\eqref{ax:mdkt-relation}, there exists $x' \in \R(x)$ such that $x'\Q y'$. Since $\R(x) \subseteq V$, we have $x' \in V$, and thus $(x', y') \in V \times W$, contradicting the hypothesis that $V \times W \subseteq \Q^c$. Therefore $\Box_{\R} V \notcon_{\Q} \Diamond_{\G} W$, as required. 

Condition~\eqref{cond:filter} follows directly from the fact that modal dual Kleene triples satisfy Condition~\eqref{ax:mdkt-filter}.
\end{proof}

\begin{theorem}
\label{thm:epsilon-mdkt-morphism}
Let $\mathcal{X} = \langle \X, \G, \R, \Q,N, H \rangle$ be a modal Kleene space. Then the map
\[
\epsilon \colon \X \longrightarrow \X(\U(\X)), \qquad x \mapsto \epsilon(x) = \{V \in \U(\X) : x \in V\},
\]
is an isomorphism of modal Kleene spaces. In particular, $\X$ and $\X(\U(\X))$ are isomorphic as ordered sets and homeomorphic as topological spaces. Moreover, for all $x, y \in X$ we have
\begin{align*}
x \Q y &\iff \epsilon(x)\Q_{\con_{\Q}} \epsilon(y),\\
x\G y &\iff  \epsilon(x)\G_{\Diamond_{\G}} \epsilon(y),\\
x\R y &\iff \epsilon(x)\R_{\Box_{\R}} \epsilon(y),
\end{align*}
and $\epsilon[H] = \mathcal{H}(\mathcal{F}(H))$.
\end{theorem}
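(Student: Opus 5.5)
The plan is to assemble the statement from the component dualities already available, so that the only genuinely new ingredient is the $\Diamond$-relation $\G$. First, $\epsilon$ is a homeomorphism and an order isomorphism by Priestley duality. Second, Theorem~\ref{thm:epsilon-iso} applied to the dual Kleene triple $\langle \X, H, \Q\rangle$ gives both $\epsilon[H]=\mathcal{H}(\mathcal{F}(H))$ and the equivalence $x\Q y \iff \epsilon(x)\Q_{\con_{\Q}}\epsilon(y)$. Third, Theorem~\ref{thm:epsilon-box-morphism} applied to the $\Box$-space $\langle \X,\R,N\rangle$ gives $x\R y\iff \epsilon(x)\R_{\Box_{\R}}\epsilon(y)$ and $\epsilon[N]=\sigma_{\U(\X)}(\Box_{\R}(X))$. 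This last set is exactly the distinguished set $\sigma(\Box 1)$ of $\mathrm{X}(\mathrm{U}(\mathcal{X}))$ in Proposition~\ref{prop:triple-to-mdkt}, since the top of $\U(\X)$ is $X$. It remains to handle $\G$.

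For $\G$ we must show $x\G y \iff \epsilon(y)\subseteq \Diamond_{\G}^{-1}[\epsilon(x)]$. The latter condition says that for every $V\in\U(\X)$ with $y\in V$ we have $\G(x)\cap V\neq\emptyset$. The forward direction is immediate, since $y$ itself witnesses $\G(x)\cap V\neq\emptyset$. For the converse, suppose $x\notG y$. The second axiom of a $\Diamond$-space (Definition~\ref{def:diamond-space}) gives a clopen up-set $V$ with $y\in V$ and $\G(x)\cap V=\emptyset$. Then $V\in\epsilon(y)$ but $x\notin\G^{-1}[V]$, so $V\notin \Diamond_{\G}^{-1}[\epsilon(x)]$. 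This is the whole argument; it mirrors the $\Box$ case with the roles of the two axioms exchanged, and could alternatively be cited from \cite{cignoli1991remarks,petrovich1996}.

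Finally, we conclude that $\epsilon$ is an isomorphism in $\mathbf{mKt}^{*}$. The target $\mathrm{X}(\mathrm{U}(\mathcal{X}))$ is a modal Kleene space by Propositions~\ref{prop:mdkt-to-triple} and~\ref{prop:triple-to-mdkt}. A bijection that is an order isomorphism and a homeomorphism, reflects and preserves $\Q,\G,\R$, and maps $H$ and $N$ onto their counterparts automatically satisfies all the morphism clauses, and so does its inverse. Conditions~\eqref{ax:box-morph2} and the back condition for $\Diamond$-morphisms hold by taking $y=\epsilon^{-1}(z)$. The only point needing care is checking that the distinguished sets in $\mathrm{X}(\mathrm{U}(\mathcal{X}))$ match the images of $N$ and $H$, which the identifications $\Box_{\R}(X)=N$ and Theorem~\ref{thm:epsilon-iso} settle. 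I expect no substantial obstacle; the mildest step is the $\G$ converse, which rests entirely on the separation axiom of $\Diamond$-spaces.
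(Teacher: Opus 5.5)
Your proposal is correct and follows essentially the same route as the paper: Theorem~\ref{thm:epsilon-iso} handles $\Q$ and $H$, Theorem~\ref{thm:epsilon-box-morphism} handles $\R$ and $N$, and the $\Diamond$ case is treated separately. The only differences are that you prove $x\G y \iff \epsilon(x)\G_{\Diamond_{\G}}\epsilon(y)$ directly from the separation axiom of $\Diamond$-spaces, where the paper cites \cite{petrovich1996}, and that you check explicitly the morphism clauses that the paper leaves implicit.
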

\begin{proof}
The result follows immediately from Theorem~\ref{thm:epsilon-iso}, which establishes that $\epsilon$ is a Priestley isomorphism preserving the relation $\Q$ and the distinguished set $H$, combined with Theorem~\ref{thm:epsilon-box-morphism}, which handles the preservation of the relation $\R$ and the distinguished set $N$. The remaining preservation properties follow from the analogous representation result for $\Diamond$-spaces established in~\cite{petrovich1996}.
\end{proof}

\begin{theorem}
\label{thm:sigma-mdkt-iso}
Let $\T = \langle \D, \con, F \rangle$ be a modal Kleene triple. Then the map
\[
\sigma \colon \D \longrightarrow \U(\mathrm{X}(\D)), \qquad a \mapsto \sigma(a) = \{P \in \mathrm{X}(\D) : a \in P\},
\]
is an isomorphism of modal lattices. Moreover, for all $a, b \in D$,
\[
a \con b \iff \sigma(a) \con_{\Q_{\con}} \sigma(b),
\]
and $\sigma[F] = \mathcal{F}(\mathcal{H}(F))$.
\end{theorem}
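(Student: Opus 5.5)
The plan is to assemble the statement from three representation results already available, since $\sigma$ is the same map in each. First, Theorem~\ref{thm:sigma-iso} applied to the underlying Kleene triple $\langle \D,\con,F\rangle$ gives three facts. The map $\sigma$ is a bounded lattice isomorphism onto $\U(\X(\D))$. It satisfies $a\con b$ if and only if $\sigma(a)\con_{\Q_{\con}}\sigma(b)$. Finally, $\sigma[F]=\mathcal{F}(\mathcal{H}(F))$. This already covers every non-modal clause of the statement, because the contact relation and the filter on $\U(\X(\D))$ in $\mathrm{U}(\mathrm{X}(\T))$ are exactly $\con_{\Q_{\con}}$ and $\mathcal{F}(\mathcal{H}(F))$.

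It therefore remains to show that $\sigma$ commutes with the two modal operators. These are $\Box_{\R_\Box}$ and $\Diamond_{\G_\Diamond}$, computed in the modal Kleene space $\mathrm{X}(\T)$ of Proposition~\ref{prop:triple-to-mdkt}. For $\Box$, the underlying $\Box$ preserves finite meets by \eqref{ml1}. So Proposition~\ref{prop:XD-box-space}(1), equivalently Theorem~\ref{thm:sigma-box-iso}, gives $\sigma(\Box a)=\Box_{\R_\Box}(\sigma(a))$ with $N=\sigma(\Box 1)$; no normality $\Box 1=1$ is needed. For $\Diamond$, the operator is join-preserving and satisfies $\Diamond 0=0$ by \eqref{ml2} and \eqref{ml3}. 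The additive-operator duality of \cite{cignoli1991remarks, petrovich1996} then gives $\sigma(\Diamond a)=\G_\Diamond^{-1}[\sigma(a)]=\Diamond_{\G_\Diamond}(\sigma(a))$.

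To make the proof self-contained, I would verify this last identity directly. The inclusion $\G_\Diamond^{-1}[\sigma(a)]\subseteq\sigma(\Diamond a)$ is immediate: if $P\G_\Diamond Q$ and $a\in Q$, then $Q\subseteq\Diamond^{-1}[P]$ gives $\Diamond a\in P$. For the converse, suppose $\Diamond a\in P$. The set $\Diamond^{-1}[P]$ is a prime ideal complement: by \eqref{ml2} and primeness of $P$, together with $\Diamond 0=0\notin P$, its complement $\{b:\Diamond b\notin P\}$ is an ideal. This ideal does not meet the filter ${\uparrow}a$, since $\Diamond$ is monotone, so every $b\geq a$ has $\Diamond b\ge\Diamond a\in P$. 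By the prime filter theorem there is a prime filter $Q\supseteq{\uparrow}a$ disjoint from that ideal. Then $Q\subseteq\Diamond^{-1}[P]$ and $a\in Q$, so $P\in\G_\Diamond^{-1}[\sigma(a)]$.

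Combining the two previous paragraphs, $\sigma$ is an isomorphism of modal lattices that also preserves and reflects the contact relation and maps $F$ onto $\mathcal{F}(\mathcal{H}(F))$, which is the statement. The only step with real content is the $\Diamond$ identity just sketched, and specifically the prime filter separation there. Everything else is a direct citation of Theorem~\ref{thm:sigma-iso} and Proposition~\ref{prop:XD-box-space}.
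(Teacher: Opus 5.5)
Your proposal is correct and follows the same route as the paper, which likewise just combines Theorem~\ref{thm:sigma-iso}, the $\Diamond$-representation of \cite{petrovich1996}, and Theorem~\ref{thm:sigma-box-iso}. Your direct check of $\sigma(\Diamond a)=\Diamond_{\G_\Diamond}(\sigma(a))$ is a valid self-contained addition; one small wording slip is that $\{b:\Diamond b\notin P\}$ is an ideal but not in general a prime one, which your separation argument never uses.
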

\begin{proof}
This is established by combining Theorem~\ref{thm:sigma-iso}, the analogous algebraic representation for the operator $\Diamond$ established in~\cite{petrovich1996}, and Theorem~\ref{thm:sigma-box-iso}. Together, these results ensure that $\sigma$ is a modal lattice isomorphism that preserves the contact relation $\con$ and both modal operators.
\end{proof}

As a consequence of the preceding results, we obtain that $\mathrm{U}$ and $\mathrm{X}$ also extend naturally to the modal setting.

\begin{theorem}
\label{thm:mKt-duality}
The functors $\mathrm{U} \colon \mathbf{mKt}^{*} \to \mathbf{mKt}$ and $\mathrm{X} \colon \mathbf{mKt} \to \mathbf{mKt}^{*}$ establish a dual equivalence between the categories $\mathbf{mKt}$ and $\mathbf{mKt}^{*}$.
\end{theorem}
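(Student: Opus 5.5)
The plan is to assemble the dual equivalence from the object-level and morphism-level results already established, following the standard pattern of Theorem~\ref{thm:Kt-duality}. First, on objects: by Proposition~\ref{prop:triple-to-mdkt}, $\mathrm{X}$ sends modal Kleene triples to modal Kleene spaces. By Proposition~\ref{prop:mdkt-to-triple}, $\mathrm{U}$ sends modal Kleene spaces to modal Kleene triples. On morphisms, I will use the two consequences listed before Proposition~\ref{prop:triple-to-mdkt}.
\begin{itemize}
\item For $\mathrm{U}(f)$, combine Proposition~\ref{prop:Uf-Kt-morphism}, Proposition~\ref{prop:Uf-box-morphism} and the $\Diamond$-case from \cite{petrovich1996}.
\item For $\mathrm{X}(h)$, combine Proposition~\ref{prop:Xh-Kt-morphism}, Proposition~\ref{prop:Xh-box-morphism} and again \cite{petrovich1996}.
\end{itemize}
Functoriality (preservation of identities and composition) is inherited from Priestley duality, since both functors act by inverse images. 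So $\mathrm{U}$ and $\mathrm{X}$ are well-defined contravariant functors between $\mathbf{mKt}^{*}$ and $\mathbf{mKt}$.

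Next I exhibit the unit and counit. For a modal Kleene triple $\T$, Theorem~\ref{thm:sigma-mdkt-iso} shows that $\sigma_\T\colon \D\to\U(\X(\D))$ is an isomorphism of modal lattices, reflects and preserves contact, and maps $F$ onto $\mathcal{F}(\mathcal{H}(F))$. One small check is that the distinguished set $N=\sigma(\Box 1)$ is consistent with the operators. Since $\sigma$ is a bijective morphism in both directions with $\sigma^{-1}$ also preserving everything, it is an isomorphism in $\mathbf{mKt}$. Dually, for a modal Kleene space $\mathcal{X}$, Theorem~\ref{thm:epsilon-mdkt-morphism} gives that $\epsilon_{\mathcal{X}}$ is a homeomorphism and an order isomorphism. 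It preserves and reflects $\Q$, $\G$ and $\R$, and matches $H$ and $N$. Hence $\epsilon$ and $\epsilon^{-1}$ satisfy the $\Box$-morphism conditions \eqref{ax:box-morph-N}--\eqref{ax:box-morph2} and the $\Diamond$-morphism conditions, and $\epsilon$ is an isomorphism in $\mathbf{mKt}^{*}$.

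Finally, naturality of $\sigma$ and $\epsilon$. For $h\colon\T_1\to\T_2$, the equation $\mathrm{U}\mathrm{X}(h)\circ\sigma_{\T_1}=\sigma_{\T_2}\circ h$ says that $P\in\sigma(h(a))$ iff $h^{-1}[P]\in\sigma(a)$, which is immediate. Naturality of $\epsilon$ is the analogous verification with preimages. Both hold already at the level of Priestley duality, so no modal information is needed.

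The only step requiring care is confirming that the inverses of $\sigma$ and $\epsilon$ are themselves morphisms in the modal categories. For $\sigma$ this follows because it reflects contact and maps $F$ onto the filter. For $\epsilon$, condition \eqref{ax:box-morph2} for $\epsilon^{-1}$ must be checked. I expect this to be the main point, and I would settle it by noting that $\epsilon$ is a relational isomorphism, so \eqref{ax:box-morph2} holds with $y$ taken to be the preimage of $z$.
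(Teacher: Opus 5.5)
Your proposal is correct and takes the same route as the paper: the paper proves this theorem only by assembling the object-level constructions (Propositions~\ref{prop:triple-to-mdkt} and~\ref{prop:mdkt-to-triple}), the morphism-level facts listed just before them, and the isomorphisms of Theorems~\ref{thm:epsilon-mdkt-morphism} and~\ref{thm:sigma-mdkt-iso}. Your explicit naturality checks and your verification that $\sigma^{-1}$ and $\epsilon^{-1}$ are themselves morphisms (including \eqref{ax:box-morph2} via $y=\epsilon^{-1}(z)$) supply details the paper leaves implicit.
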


By composing the categorical equivalence established above with the previous duality, we obtain a duality for modal Kleene algebras.

\begin{corollary}
    The functors $\mathrm{K} \circ \mathrm{U}\colon \mathbf{mKt}^{*}\to \mathbf{mKl}$ and $ \mathrm{X} \circ \mathrm{T}\colon \mathbf{mKl}\to \mathbf{mKt}^{*}$  establish a dual equivalence between the categories $\mathbf{mKl}$ and $\mathbf{mKt}^{*}$.
\end{corollary}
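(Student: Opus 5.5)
The statement is the final corollary: $\mathrm{K}\circ\mathrm{U}$ and $\mathrm{X}\circ\mathrm{T}$ establish a dual equivalence between $\mathbf{mKl}$ and $\mathbf{mKt}^{*}$. I will obtain it by composing the covariant equivalence of Theorem~\ref{teorema equivalencia} with the dual equivalence of Theorem~\ref{thm:mKt-duality}. A dual equivalence between $\mathbf{mKt}^{*}$ and $\mathbf{mKt}$, followed by an equivalence between $\mathbf{mKt}$ and $\mathbf{mKl}$, is again a dual equivalence. Nothing beyond the already established functors and natural isomorphisms should be needed.

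First I fix the data. On the one side, $\mathrm{K}\colon\mathbf{mKt}\to\mathbf{mKl}$ and $\mathrm{T}\colon\mathbf{mKl}\to\mathbf{mKt}$ are functors. They come with natural isomorphisms $\beta\colon \mathrm{Id}_{\mathbf{mKl}}\Rightarrow \mathrm{K}\mathrm{T}$ (Theorem~\ref{thm:beta-iso}) and $\alpha\colon \mathrm{T}\mathrm{K}\Rightarrow \mathrm{Id}_{\mathbf{mKt}}$ (Theorem~\ref{thm:alpha-iso}). On the other side, $\mathrm{U}\colon\mathbf{mKt}^{*}\to\mathbf{mKt}$ and $\mathrm{X}\colon\mathbf{mKt}\to\mathbf{mKt}^{*}$ are contravariant functors. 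They come with natural isomorphisms $\sigma\colon \mathrm{Id}_{\mathbf{mKt}}\Rightarrow \mathrm{U}\mathrm{X}$ (Theorem~\ref{thm:sigma-mdkt-iso}) and $\epsilon\colon \mathrm{Id}_{\mathbf{mKt}^{*}}\Rightarrow \mathrm{X}\mathrm{U}$ (Theorem~\ref{thm:epsilon-mdkt-morphism}). Then $\mathrm{K}\circ\mathrm{U}$ and $\mathrm{X}\circ\mathrm{T}$ are contravariant functors, since each is the composite of a covariant and a contravariant functor.

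Next I build the two required natural isomorphisms by pasting. On $\mathbf{mKl}$, for each $\A$ I take
\[
(\mathrm{K}\mathrm{U})(\mathrm{X}\mathrm{T})(\A)=\mathrm{K}(\mathrm{U}\mathrm{X}(\mathrm{T}\A))\xleftarrow{\;\mathrm{K}(\sigma_{\mathrm{T}\A})\;}\mathrm{K}\mathrm{T}(\A)\xleftarrow{\;\beta_\A\;}\A .
\]
This composite is a natural isomorphism because functors preserve isomorphisms and whiskering preserves naturality. On $\mathbf{mKt}^{*}$, for each $\mathcal{X}$ I take the composite $\mathrm{X}(\alpha_{\mathrm{U}\mathcal{X}})\circ\epsilon_{\mathcal{X}}$. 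Here $\epsilon_{\mathcal{X}}\colon\mathcal{X}\to\mathrm{X}\mathrm{U}\mathcal{X}$, and $\mathrm{X}(\alpha_{\mathrm{U}\mathcal{X}})\colon \mathrm{X}(\mathrm{U}\mathcal{X})\to \mathrm{X}(\mathrm{T}\mathrm{K}\mathrm{U}\mathcal{X})$; the direction reverses because $\mathrm{X}$ is contravariant. The result is an isomorphism $\mathcal{X}\cong(\mathrm{X}\mathrm{T})(\mathrm{K}\mathrm{U})(\mathcal{X})$, natural in $\mathcal{X}$.

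The main point to check is that the components listed above really are natural, and not merely objectwise isomorphisms. The naturality of $\sigma$ and $\epsilon$ is part of the content of Theorem~\ref{thm:mKt-duality}, and the naturality of $\alpha$ and $\beta$ is part of Theorem~\ref{teorema equivalencia}. If naturality were not already recorded, I would verify it directly on morphisms; for example, $\mathrm{K}\mathrm{T}(f)\circ\beta_\A=\beta_{\A'}\circ f$ reduces to $(|f(x)|,|\sim f(x)|)=(|f(x)|,|f(\sim x)|)$. Since every piece is inherited, the corollary follows by this formal composition.
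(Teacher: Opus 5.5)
Your proposal is correct and follows the same route as the paper: the corollary is obtained by composing the equivalence of Theorem~\ref{teorema equivalencia} with the dual equivalence of Theorem~\ref{thm:mKt-duality}. Your explicit natural isomorphisms $\mathrm{K}(\sigma_{\mathrm{T}\A})\circ\beta_\A$ and $\mathrm{X}(\alpha_{\mathrm{U}\mathcal{X}})\circ\epsilon_{\mathcal{X}}$ simply spell out the routine categorical step that the paper leaves implicit.
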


\subsection*{Acknowledgments}
This research was funded by the Argentinean projects PIP 112-20200101301CO (CONICET) and 112-20250100318CO (CONICET). Finally, it acknowledges the support of the Argentinean project 03/C338 (UNICEN).


\end{document}